\newcommand{\B}{{\mathbb{B}}}
\newcommand{\R}{{\mathbb{R}}}
\newcommand{\N}{{\mathbb{N}}}
\newcommand{\PP}{{\mathbb{B}}}
\newcommand{\ha}{{\mathcal{A}}}
\newcommand{\hc}{{\mathcal{C}}}
\newcommand{\hf}{{\mathcal{F}}}
\newcommand{\hh}{{\mathcal{H}}}
\newcommand\ol{\overline}
\newcommand\lc{\langle}
\newcommand\rc{\rangle}
\newtheorem{theorem}{Theorem}[section]
\newtheorem{lemma}[theorem]{Lemma}
\newtheorem{coro}[theorem]{Corollary}
\def\bel{\begin{equation}\label}
\def\eeq{\end{equation}}
\def\pd{{\partial}}
\def\dd{{{\rm d}}}
\def\ga1{{1, \gamma}}
\def\xy{{(x, y, z)}}
\def\w2{W^{2, \infty}}
\def\ww{W^{1, \infty}}
\begin{document}

\renewcommand{\theequation}{\thesection.\arabic{equation}}

\title{Structural Stability of Supersonic Contact Discontinuities in Three-Dimensional Compressible
Steady Flows\footnote{This work is partially supported
by NNSF of China under the grants 10971134, 11031001 and 91230102.}}

\author{Ya-Guang Wang\footnote{Department of Mathematics, and MOE-LSC, Shanghai Jiao Tong University,
Shanghai 200240, China (\textbf{ygwang@sjtu.edu.cn}).} 
~and~ Fang Yu\footnote{Department of Mathematics, Shanghai Jiao Tong University, Shanghai 200240, China; Present address: Department of Mathematics, Pennsylvania State University, University Park, PA 16802, USA (\textbf{fangyuvi@gmail.com}
).}}

\date{}

\maketitle

\begin{abstract}
In this paper, we study the structurally nonlinear stability of supersonic contact discontinuities in three-dimensional compressible isentropic steady flows. Based on the weakly linear stability result and the  $L^2$-estimates obtained in \cite{WangYu1},  for the linearized problems of three-dimensional compressible isentropic steady equations at a supersonic contact discontinuity satisfying certain stability conditions, we first  derive tame estimates of solutions to the linearized problem in higher order norms by exploring the behavior of vorticities. Since the supersonic contact discontinuities are only weakly linearly stable, so the tame estimates of solutions to the linearized problems have loss of regularity with respect to both of background states and initial data, so to use the tame estimates to study the nonlinear problem 
we adapt the Nash-Moser-H\"ormander iteration scheme to conclude that supersonic contact discontinuities in three-dimensional compressible steady flows satisfying the stability conditions (\cite{WangYu1}) are structurally nonlinearly stable at least locally in space.

\end{abstract}

\noindent\textbf{Key words.}
3-d compressible isentropic
steady flows,  supersonic contact discontinuities, structrally nonlinear stability, Nash-Moser-H\"ormander iteration
\smallskip

\noindent\textbf{AMS subject classifications.} 35L65, 35L67, 76E17, 76N10

\tableofcontents
\section{Introduction}
\setcounter{equation}{0}

Based on the conservation of density and momentum, the steady compressible isentropic inviscid flows
in three space variables can be described by the following equations,
\bel{euler}
\begin{cases}
\frac{\partial}{\partial x}(\rho u)+\frac{\partial}{\partial y}(\rho v)+
\frac{\partial}{\partial y}(\rho w)=0\\
\frac{\partial}{\partial x}(\rho u^2+p(\rho))+\frac{\partial}{\partial y}(\rho uv)+
\frac{\partial}{\partial y}(\rho uw)=0\\
\frac{\partial}{\partial x}(\rho uv)+\frac{\partial}{\partial y}(\rho v^2+p(\rho))+
\frac{\partial}{\partial y}(\rho vw)=0\\
\frac{\partial}{\partial x}(\rho uw)+\frac{\partial}{\partial y}(\rho vw)+
\frac{\partial}{\partial y}(\rho w^2+p(\rho))=0
\end{cases}\eeq
where $\rho$, $p=p(\rho)$ and
$(u, v, w) \in \R^3$ denote the density, pressure and velocity  of the fluid respectively, with $p'(\rho)>0$ for $\rho>0$. It is an important model in gas dynamics, aeronautics and astronautics.

Set $U=(u,v,w,p)^T$. Obviously, the system \eqref{euler} can be rewritten as the following symmetric form \bel{mfeuler} A_1(U)\pd_xU+A_2(U)\pd_yU+A_3(U)\pd_zU=0,
\eeq where
\begin{equation}\label{A}
  A_1(U)=
  \begin{pmatrix}
    \rho u&0&0&1\\
    0&\rho u&0&0\\
    0&0&\rho u&0\\
    1&0&0&\frac{u}{\rho c^2}
  \end{pmatrix},
 \quad   A_2(U)=
  \begin{pmatrix}
    \rho v&0&0&0\\
    0&\rho v&0&1\\
    0&0&\rho v&0\\
    0&1&0&\frac{v}{\rho c^2}
  \end{pmatrix},
\quad    A_3(U)=
  \begin{pmatrix}
    \rho w&0&0&0\\
    0&\rho w&0&0\\
    0&0&\rho w&1\\
    0&0&1&\frac{w}{\rho c^2}
  \end{pmatrix},
\end{equation}
with
$c=\sqrt{p'(\rho)}$ being the sonic speed.
When the velocity in the $x$-direction is supersonic, i.e. $u>c$, the coefficient matrix $A_1(U)$
is positively definite, then the system \eqref{mfeuler} is symmetric hyperbolic with $x$ being regarded as the time-like 
direction. 

As shown in the monographes \cite{CF, majda, CFe3} etc., it is an important and challenging field to study the propagation, interaction and stability of elementary waves such as the shocks, rarefaction waves and contact discontinuities in  quasilinear hyperbolic conservation laws. The stability of
shocks and rarefaction waves in multi-dimensional gas dynamics has been studied by Majda
\cite{majda}, Metivier et al. \cite{met1, met2}, and Alinhac
\cite{ali}, 
Contact discontinuities occur ubiquitously, such as 
slip-stream interfaces, lifting of aircrafts,
tornadoes (refer to \cite{CF, FejerMiles,
miles, SNW} and references therein), so to understand the
stability of contact discontinuities is an important step in studying the multi-dimensional Riemann
problem, the Mach reflection of shocks, the interface problem of
two-phase flow, etc..

In recent years, there are some interesting
works on the stability analysis of contact discontinuities. For
the Euler equations in two-dimensional isentropic unsteady gas
dynamics, in \cite{cou1, cou2} Coulombel and Secchi obtained a rigorous theory on the stability of a supersonic contact discontinuity when the Mach number (the ratio between the relative speed of the
fluid with respect to the discontinuity front over the sonic speed) $M>\sqrt{2}$, which had been investigated already before in \cite{miles} and \cite{AM} by the mode analysis and the nonlinear geometric optics approach respectively. A weakly linear stability result was obtained in \cite{morando-trebeschi} for a
two-dimensional contact discontinuity in nonisentropic compressible
flow. Some related problems on the stability of vortex sheets in two
dimensional steady flow were studied by Chen at al. in
\cite{ChenKukreja, ChenZhangZhu} by using the Glimm scheme.
However, as shown in \cite{serre}, unsteady compressible vortex sheets in three
space dimensions are always
violently unstable, one of main factors is that the tangential velocity fields for the three-dimensional vortex sheets are two-dimensional, and this is the main unstable effect on the vortex sheets. Recently, some works
showed that magnetic fields have stabilization effect on
vortex sheets for two and three dimensional compressible MHD, cf.
refer to \cite{Chen-Wang, Chen-Wang1, tra, tra1, WangYu} and
references therein.

It is an interesting problem to study the
stability of contact discontinuities in three dimensional steady
flows, as it not only plays a crucial role in studying the structural stability of interaction of elementary waves, such as the multi-dimensional
shock reflection-diffraction on an interface, and also shall provide important insight of the really multi-dimensional contact discontinuities, as tangential velocity fields in the three-dimensional steady contact discontinuities are also two-dimensional,  this yields complicated stability phenomena of contact discontinuities. In \cite{WangYu1}, we have obtained the weakly linear stability criteria of contact
discontinuities in three-dimensional compressible isentropic
supersonic steady flows, by computing the Lopatinskii determinant for the linearized problem at a planar contact discontinuity, roughly speaking, it says that for a supersonic contact discontinuity in the three-dimensional steady flow with the velocity fields being non-parallel on both sides of the discontinuity front, it is weakly linearly stable if and only if the velocity fields restricted to a space-like plane should be also supersonic (see \eqref{2.15}). Moreover, we have established  the $L^2$-stability estimates of solutions to the linearized problems of the three-dimensional steady Euler equations \eqref{euler} at a non-planar contact discontinuity by constructing the Kreiss symmterizers through developing the argument from \cite{ kreiss, cou1, WangYu}, and using the para-differential calculus. These estimates exhibit loss of regularity of solutions with respect to the background contact discontinuity and the initial data, it also means that this supersonic contact discontinuity is only weakly stable. 

The main goal of this work is to study the structurally nonlinear stability of the supersonic contact discontinuities in the three-dimensional steady Euler equations \eqref{euler}. As mentioned at above, there is loss of regularity of solutions to the corresponding linearized problem, so we shall adapt the  
Nash-Moser-H\"ormander iteration scheme to study nonlinear problems. From this work, we obtain that a supersonic contact discontinuity satisfying the linearly stable criteria \eqref{2.15} is also  structurally nonlinearly stable at least locally in the propagation direction.

The remainder of this paper is organized as follows. In Section 2,
we formulate the nonlinear problem of a contact discontinuity in
three dimensional compressible isentropic steady flow, and state the structural stability result of
the supersonic  contact discontinuities. To study the nonlinear problem, we establish the tame estimates of solutions to the linearized problem in Section 3. First in \S3.1, we derive the effective linearized problem, and  present the basic $L^2$ stability estimate, then in \S3.2 we derive higher order norm estimates for the lineaized problem. Estimates of tangential derivatives of solutions shall be obtained by differentiating the problems directly. Noting that the discontinuity front is characteristics, from the equations one only can estimate the normal derivatives of non-characteristic components of unknowns in terms of  tangential derivatives of unknowns. To study the characteristic part of unknowns,  inspired from the approach of \cite{cou2}, we introduce a linearized version of vorticity field, and observe each component of vorticity satisfies a transport equation tangential to characteristical boundary, so by combining estimates of vorticity and normal derivatives of non-characteristic unknowns, we conclude the higher order estimates of solutions in \S3.2. In Section 4, we apply the Nash-Moser-H\"ormander iteration to construct the approximate solutions to the nonlinear problem of the supersonic contact discontinuities in the steady Euler equations \eqref{euler}. Finally, the error estimates of the iteration scheme, and the convergence of approximate solutions are given in Section 5, which concludes the structural stability of the supersonic contact discontinuities in the three-dimensional steady Euler flow.

\section{Formulation of Problems and Main Results}

\setcounter{equation}{0}

For the compressible isentropic steady Euler equations \eqref{euler} in three space variables,  assume that the piecewise smooth
function
\begin{equation}\label{cd}
U(x,y,z)=\left\{\begin{array}{l}
U^+(x,y,z), \quad y>\psi(x,z)\\[3mm]
U^-(x,y,z), \quad y<\psi(x,z)\end{array}\right.\end{equation}
with $U=(u,v,w,p)^T$, is a weak
solution of \eqref{euler} in the distribution sense, then it satisfies
the equations
\eqref{euler} classically on both sides of $\Gamma=\{y=\psi(x, z)\}$,
and the Rankine-Hugoniot
jump conditions on the front $\Gamma=\{y=\psi(x, z)\}$:
\begin{equation}\label{rh}
  \psi_x
  \left[\begin{matrix}
    \rho u\\ \rho u^2+p\\ \rho uv\\ \rho uw
  \end{matrix}\right]
-\begin{bmatrix}
 \rho v\\ \rho uv\\ \rho v^2+p\\ \rho vw
\end{bmatrix}
+\psi_z
\begin{bmatrix}
  \rho w\\ \rho uw\\ \rho vw\\ \rho w^2+p
\end{bmatrix}=0,
\end{equation}
with $[\cdot]$ denoting the jump of a related function acrossing the front $\Gamma$. Let $m=\rho(\psi_x u-v+\psi_z w)$ be the mass flux. If $m^{+}=m^{-}=0$ on $\Gamma$, i.e. without any mass transfer flux acrossing the front $\Gamma=\{y=\psi(x, z)\}$,
then $(U^{+}, U^{-}, \Gamma)$ is called a contact discontinuity of \eqref{euler}, in this case, the Rankine-Hugoniot condition \eqref{rh} reads as
\bel{rh1} \psi_x u^{\pm}-v^{\pm}+\psi_z
w^{\pm}=0, \quad p^{+}=p^{-}. \eeq
The first condition given in \eqref{rh1} implies that the normal velocities on both sides of $\Gamma$ vanish, while the tangential velocity fields of $U$ acrossing $\Gamma$ may have jump. As the tangential velocity fields on both sides of $\Gamma$ are two-dimensional, the stability/instability mechanism of the contact discontinuity \eqref{cd} is very challenging, in constrast to the problems of contact discontinuities in the two-dimensional steady or un-steady compressible flows, in which the tangential velocity fields on both sides of front are of one-dimension only.

In this work, we consider the case that the contact
discontinuity \eqref{cd} is supersonic in one direction, say in the $x$-direction, i.e. $u^\pm
>c^\pm$, then as mentioned in Section one, $x$ can be regarded as the time-like. In \cite{WangYu1}, we have studied the linear stability criteria of this supersonic contact discontinuity, and also obtained the $L^2-$estimate of solutions to the problem of the system \eqref{mfeuler} linearized at a background supersonic contact discontinuity.
The aim of this work is to study the structural stability of a supersonic
contact discontinuity. For a given supersonic contact
discontinuity $(U^+, U^-, \Gamma)$ moving from negative $x$ to positive $x$, we are going  to
see whether this contact discontinuity persists in $\{x>0\}$ even for small $x$.
This problem can be formulated as the following one:

\begin{enumerate}
\item[(FBP)]: For a given supersonic contact
discontinuity $(U_0^+, U_0^-)$, $\Gamma_0=\{y=\psi_0(x,z)\}$ of \eqref{mfeuler}-\eqref{rh1} with $u_0^\pm>c_0^\pm$ for $\{x\leq0\}$,
to determine $U^+$, $U^-$ and a free boundary
$\Gamma=\{y=\psi(x,z)\}$ in $\{x>0\}$ satisfying
\begin{equation}\label{FB}
\left\{\begin{array}{l}
A_1(U^\pm)\partial_x
U^\pm+A_2(U^\pm)\partial_y
U^\pm+A_3(U^\pm)\partial_z
U^\pm=0,\qquad\pm(y-\psi(x, z))>0\\[3mm]
\psi_x u^{\pm}-v^{\pm}+\psi_z
w^{\pm}=0, \quad p^{+}=p^{-}, \qquad{\rm on~} \{y=\psi(x, z)\}\\[3mm]
U^\pm|_{x\le 0}=U_0^\pm(x,y,z),
\quad \pm(y-\psi_0(x, z))>0\\[3mm]
\psi|_{x\le 0}=\psi_0(x, z).\end{array}\right.\end{equation}
\end{enumerate}

This is a free boundary problem since the front $\Gamma=\{y=\psi(x,
z)\}$ is also an unknown.
To handle this free boundary,  as
\cite{met2, cou2, Chen-Wang, tra1, WangYu1} we introduce the following transformation from $(x,y,z)$ to $(\tilde{x}, \tilde{y}, \tilde{z})$, 
\bel{psi-y} x=\tilde
x, \quad y=\Psi^\pm(\tilde x, \tilde y, \tilde z), \quad z=\tilde z,
\eeq with $\Psi^\pm(\tilde x,\tilde y,\tilde z)$ satisfying the
 constraints \bel{psi-def}
\begin{cases}
  \Psi^\pm(\tilde x,0,\tilde z)=\psi(\tilde x,\tilde z)\\
  \pm\Psi_{\tilde y}^\pm \ge \kappa_0>0
\end{cases}\eeq
for a positive constant $\kappa_0$, then the domain
$\{\pm(y-\psi(x,z))>0\}$ is changed into $\Omega=\{\tilde y>0\}$ with
the fixed boundary $\{\tilde y=0\}.$

As in \cite{cou2}, inspired by the transport equation of $\psi$ given in \eqref{FB}, the natural candidates of $\Psi^\pm(x,y,z)$ are solutions to the following problem in $\{\tilde y\ge 0\}$:
\begin{equation}\label{2.10}
\begin{cases}
   u^{\pm} \partial_{\tilde x} \Psi^{\pm} -v^{\pm} +w^{\pm} \partial_{\tilde z}\Psi^{\pm}=0,\qquad \tilde x>0\\
\Psi^\pm(0, \tilde y, \tilde z)=\pm \tilde y+\tilde{\Psi}^\pm_0(\tilde y,\tilde z),\end{cases}\end{equation}
with $\tilde{\Psi}^\pm_0(\tilde y, \tilde z)$ being a proper extension of $\psi_0(\tilde z)=\psi_0(0, \tilde z)$ in $\{\tilde y\ge 0\}$, such that
$ \pm\Psi_{\tilde y}^\pm \ge \kappa_0>0$ holds.

Set  $$\tilde{U}_0^\pm(\tilde y,\tilde z)=
U_0^\pm(0,y,z),  \quad \tilde{U}^\pm(\tilde x, \tilde y, \tilde z)= U^\pm (x,y,z).$$
From \eqref{FB}, we know that $\tilde{U}^\pm(\tilde x, \tilde y, \tilde z)$ satisfies the following problem,
\begin{equation}\label{2.11}
\begin{cases}
L(U^\pm, \Psi^\pm)U^\pm=0, \qquad \text{in} ~\{y>0\}\\
{\mathcal B}(U^+, U^-, \psi)=0, \qquad \text{on} ~\{y=0\}\\
U^\pm|_{x=0}=U_0^\pm(y,z), \quad \psi|_{x=0}=\psi_0(z)
\end{cases}\end{equation}
where we have dropped the tildes of notations for simplicity, and 
\begin{equation}\label{212}
\begin{array}{l}
L(U^\pm, \Psi^\pm)U^\pm=A_1(U^{\pm})\partial_xU^{\pm}+\frac{1}{\Psi_y^{\pm}}(A_2(U^{\pm})-\Psi_x^{\pm}A_1(U^{\pm})-\Psi_z^{\pm}A_3(U^{\pm}))\partial_y
  U^{\pm}+A_3(U^{\pm})\partial_zU^{\pm}\end{array},\end{equation}
$${\mathcal B}(U^+, U^-, \psi)=\left(
\begin{array}{c}
\psi_x u^{+}-v^{+}+\psi_z w^{+}\\[2mm]
\psi_x u^{-}-v^{-}+\psi_z w^{-}\\[2mm]
p^{+}-p^{-}\end{array}
\right)$$ with $\psi(x,z)=\Psi^{\pm}(x,0,z)$
and $\Psi^{\pm}(x,y,z)$ being given in \eqref{2.10}.

\vspace{.1in}

To study the nonlinear problem \eqref{2.11}, let us first give a stable background state.
Obviously, the following piecewise constant function
\bel{ubar}
\ol{U}(x,y,z)=\begin{cases}
\ol{U}_r=(\bar{u}_r, 0, \bar{w}_r, \bar{p})^T, \qquad y>0\\[2mm]
\ol{U}_l=(\bar{u}_l, 0, \bar{w}_l, \bar{p})^T, \qquad y<0
\end{cases}
\eeq satisfying
\bel{2.13}
\bar{u}_r>\bar{c}, \quad \bar{u}_l>\bar{c}, ~\quad
~(\bar{u}_r-\bar{u}_l)^2+(\bar{w}_r-\bar{w}_l)^2\neq0,\eeq
 with
$\bar{c}^2=p'(\bar{\rho})$, is a planar contact discontinuity for the compressible steady Euler equations \eqref{euler}. Here $\bar{\rho}$ is the density corrsponding to the pressure $\bar p$ by the relation $p=p(\rho)$. As shown in \cite{WangYu1}, when the tangential velocity fields $(\bar{u}_r, \bar{w}_r)$
and $(\bar{u}_l, \bar{w}_l)$ are parallel, the planar contact discontinuity $(\ol{U}_r, \ol{U}_l)$ is always nonlinearly unstable,  thus in this work we shall only consider the case of $(\bar{u}_r, \bar{w}_r)$
and $(\bar{u}_l, \bar{w}_l)$ being non-parallel to each other. As noted in Remark 2.1 of \cite{WangYu1}, without loss of generality we can assume
\bel{2.14}
\bar w_r \bar w_l<0.\eeq

We impose the following stability conditions obtained in Theorem 3.1 of \cite{WangYu1} on the state $(\ol{U}_r, \ol{U}_l)$ such that the given planar contact discontinuity \eqref{ubar} is weakly stable,
\bel{2.15}
\qquad\qquad\begin{cases}
 \frac{\bar c^2}{\bar u_r^2}+\frac{\bar c^2}
 {\bar u_l^2}<1, \quad \bar w_r^2> \bar c^2, \quad \bar w_l^2> \bar c^2,\\
 \min\limits_{\tilde {\theta} \in (\theta_l, \theta_r)}
 \left(\frac{\bar c^2}{(\bar u_l\sin\tilde{\theta} -\bar w_l\cos\tilde{\theta})^2}
 +\frac{\bar c^2}{(\bar u_r\sin\tilde{\theta} -\bar w_r\cos\tilde{\theta})^2}\right)
  < 1,\\
(\bar u_l \bar w_r-\bar u_r \bar w_l)^2
  \neq 2(\bar c \bar u_l +\bar c \bar u_r)^2+2(\bar c \bar w_l +\bar c
  \bar w_r)^2,
\end{cases}
\eeq
where $\theta_r=\max(\arctan\frac{\bar w_l}{\bar u_l}, \arctan\frac{\bar w_r}{\bar u_r})$ and $\theta_l=\min(\arctan\frac{\bar w_l}{\bar u_l}, \arctan\frac{\bar w_r}{\bar u_r})$.

The main proposal of this work is to prove the following structural stability of vortex sheet $(\ol{U}_r, \ol{U}_l)$.

\begin{theorem} Suppose that the planar contact discontinuity \eqref{ubar} satisfies the conditions \eqref{2.14} and \eqref{2.15}.  Then, for any fixed $s>\frac{27}{2}$, there is a small quantity $\delta>0$ depending on $(\ol{U}_r, \ol{U}_l)$ such that when the initial data $U_0^\pm(y,z)$ and $\psi_0(z)$ given in \eqref{2.11} satisfy
\bel{2.16}
\|U_0^\pm-\overline{U}_{r,l}\|_{H^{s-\frac 12}(\R^2)}+\|\psi_0\|_{H^s(\R)}\le \delta
\eeq
and the compatibility condition of the problem \eqref{2.11} up to order $s-1$, there is $X>0$ such that the problems  \eqref{2.11} and \eqref{2.10}  admit unique solutions
\bel{2.17}
U^\pm\in H^{s-7}([0,X]\times \R_y^+\times \R_z), \qquad
\Psi^\pm \in H^{s-7}([0,X]\times \R_y^+\times \R_z).
\eeq
\end{theorem}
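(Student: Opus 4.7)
The plan is to reformulate the free boundary problem \eqref{2.11}--\eqref{2.10} as a perturbation of the planar background \eqref{ubar}, writing $V^\pm = U^\pm - \ol{U}_{r,l}$ and $\varphi^\pm = \Psi^\pm \mp y$, and then to run a Nash--Moser--H\"ormander iteration in the half-space $\{y>0\}$ based on the tame estimates for the linearized problem announced in Section 3. The skeleton is standard: pick tangential smoothing operators $S_\theta$ in $(x,z)$ acting on $H^s([0,X]\times\R^+_y\times\R_z)$, build an initial approximate solution $(V_a^\pm,\varphi_a)$ from $(U_0^\pm,\psi_0)$ using the compatibility conditions of order $s-1$ so that the interior, boundary, and initial residuals vanish to sufficiently high order at $x=0$, and then define inductively $V^\pm_{n+1}=V^\pm_n+\delta V^\pm_n$, $\varphi_{n+1}=\varphi_n+\delta\varphi_n$ where $(\delta V_n^\pm,\delta\varphi_n)$ solves the \emph{effective} linearized problem (in the good-unknown formulation of Alinhac) at the smoothed state $S_{\theta_n}(V_n^\pm,\varphi_n)$ with source equal to the accumulated residual.

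Second, the heart of the argument is the tame estimate for the linearized problem. Starting from the $L^2$ stability estimate of \cite{WangYu1}, tangential derivatives $\partial_x^\alpha\partial_z^\beta$ are obtained by differentiating the linearized equations and jump conditions, applying the $L^2$ estimate to the differentiated unknowns, and absorbing commutators with the variable coefficients via Moser-type product and composition inequalities; since the base estimate is only weakly stable there is a fixed loss of derivatives both in the source term and in the coefficients, which I must track explicitly. Normal derivatives are delicate because the boundary $\{y=0\}$ is characteristic: only the non-characteristic components of $V^\pm$ can be recovered algebraically by solving for $\partial_y V^\pm$ from the linearized system, while the characteristic part must be estimated through the linearized vorticity $\omega^\pm$, which, following the two-dimensional argument of \cite{cou2}, satisfies a transport equation whose characteristics are tangent to $\{y=0\}$, so it can be controlled from the initial data alone, without a boundary condition. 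Iterating this combination of tangential $+$ normal $+$ vorticity estimates and invoking Moser-type interpolation inequalities produces the sought tame estimate in $H^s$.

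Third, the quadratic residual, namely the difference between $L(U+\delta U,\Psi+\delta\Psi)(U+\delta U)-L(U,\Psi)U$ and its linearization in $(\delta U,\delta\Psi)$, together with the analogous expansion of $\mathcal{B}$, is bounded by the usual Moser product and composition estimates, which convert it into a quadratic-in-$\delta$ expression with a controlled loss of derivatives. Combining the tame linear estimate with this quadratic error bound under the geometric smoothing sequence $\theta_n=\theta_0\,2^n$ yields convergence of $(V_n^\pm,\varphi_n)$ in $H^{s-7}$ on some strip $[0,X]\times\R^+_y\times\R_z$, provided the initial perturbation is sufficiently small in $H^{s-\frac12}$ (condition \eqref{2.16}) and $s>\tfrac{27}{2}$; the specific loss $7$ and the threshold $\tfrac{27}{2}$ arise from the standard inequalities $s>2L+L'+\text{const}$ and final regularity $s-L$, with $L,L'$ the losses suffered by the tame and the quadratic estimates respectively.

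The step I expect to be the main obstacle is the precise accounting of regularity loss in the tame estimate. The $L^2$ stability of \cite{WangYu1} already loses derivatives because the Lopatinskii condition degenerates (the discontinuity is only weakly stable), and the vorticity route to the characteristic components reintroduces background coefficients into a transport equation, adding another layer of loss. Casting the final bound in the genuinely tame form $\|(\delta V,\delta\varphi)\|_{H^s}\lesssim \|f\|_{H^{s+L}}+\|(V,\varphi)\|_{H^{s+L}}\|f\|_{H^{L_0}}$ with a small fixed $L_0$, while keeping $L$ as small as possible so that $s-L\geq s-7$, is delicate and essentially determines both the threshold $s>\frac{27}{2}$ and the regularity drop in \eqref{2.17}.
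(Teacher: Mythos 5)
Your overall strategy coincides with the paper's: derive tame estimates from the $L^2$ stability of \cite{WangYu1} by combining tangential derivatives, the Coulombel--Secchi vorticity trick for the characteristic part, and algebraic recovery of the non-characteristic normal derivatives; then run a Nash--Moser--H\"ormander scheme around an approximate solution built from the compatibility conditions, using Alinhac's good unknowns. This is exactly the roadmap announced in the abstract and carried out in Sections 3--5. However, a few load-bearing technical ingredients are missing or misstated in your proposal.

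First, and most importantly, you propose to linearize at the merely \emph{smoothed} state $S_{\theta_n}(V_n^\pm,\varphi_n)$. That is not enough. The tame estimate of Theorem~3.2 (hence Corollary~3.6) applies only when the background state satisfies the eikonal constraint \eqref{eikonal} and the first two Rankine--Hugoniot conditions on $\{y=0\}$, because that is what guarantees the boundary matrix $A_b$ has constant rank and that the effective boundary operator $B_e'$ has the form \eqref{lbc} used in the $L^2$ estimate. Smoothing destroys these constraints. The paper therefore introduces a \emph{modified} state $V^\pm_{n+\frac12}$ in \eqref{f3.73}--\eqref{f3.74}, defined so that \eqref{f3.72} holds exactly; the linearized problem at step $n$ is posed at $(U^a+V_{n+\frac12},\Psi^a+S_{\theta_n}\Phi_n)$ rather than at the smoothed state, and Lemma~\ref{lemma6.4} controls the gap $V^\pm_{n+\frac12}-S_{\theta_n}V^\pm_n$. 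Without this modification the linear solvability and the tame estimate are not available, so the induction does not close; this is a genuine gap.

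Second, you do not explain how $\delta\Phi^\pm_n$ is produced in the interior from $\delta\phi_n$ on the boundary, nor how the two extensions are made to agree on $\{y=0\}$. The paper solves the transport problems \eqref{5.27}--\eqref{5.28} with correction terms $h_n^\pm$ chosen in \eqref{5.34}--\eqref{5.35} precisely so that the iterated eikonal equation $\mathcal{E}(V^\pm,\Phi^\pm)=0$ is recovered in the limit and $\delta\Phi^+_n|_{y=0}=\delta\Phi^-_n|_{y=0}$. This interior lifting of the front, together with the extra error terms $\overline e_{\pm,n}$ it generates and the accumulated error bookkeeping of Lemma~\ref{lemma6.8}, is a core part of the scheme you should not omit.

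Two smaller points. The paper uses $\theta_n=\sqrt{\theta_0^2+n}$ (so $\Delta_n\sim\theta_n^{-1}$), not a geometric sequence $\theta_n=\theta_0 2^n$; the inductive hypothesis $(H_n)$ and the accumulated-error estimates in Lemmas~\ref{lemma6.7}--\ref{lemma6.9} are calibrated to the slowly growing sequence, and a geometric sequence would change the exponent bookkeeping. Also the smoothing operators $S_\theta$ of \eqref{5.14}--\eqref{5.16} act on all of $(x,y,z)$, not only tangentially in $(x,z)$; the trace property \eqref{5.16} is essential for aligning the boundary values of the smoothed $V_n^\pm$.
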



\section{Tame Estimates of Linearized Problems}

\setcounter{equation}{0}

As shown in \cite{WangYu1}, the supersonic contact discontinuity $(\overline{U}_r, \overline{U}_l)$ given by \eqref{ubar} is only weakly linearly stable, as in \cite{cou2, Chen-Wang, tra1} we shall adapt the  Nash-Moser-H\"ormander iteration scheme to study the nonlinear problems \eqref{2.11} and \eqref{2.10}. To do this, in this section, first we derive an effective linearized problem at a non-planar supersonic contact discontinuity, present the $L^2$ stability estimate given in \cite{WangYu1}, then we estimate the solutions of linearized problem in higher order norms.

\subsection{The effective linearized problem and $L^2$-estimate}\label{vclp}

Suppose that a perturbed non-planar contact discontinuity of \eqref{ubar} takes
the form
\bel{vbstate}
U(x,y,z)=\begin{cases}
U_r\xy=\ol{U}_r+V_r\xy,\quad &y>\psi(x,z)\\[1mm]
U_l\xy=\ol{U}_l+V_l\xy ,\quad &y<\psi(x,z)\end{cases}
\eeq
satisfying the Rankine-Hugoniot conditions \eqref{rh1} on $\{y=\psi(x,z)\}$.
To derive the linearized problem of \eqref{mfeuler} and (\ref{rh1}) at the given contact discontinuity solution (\ref{vbstate}), as in \eqref{psi-y}, we take the transformation,
\[ 
x=\tilde{x}, \quad
y=\Psi_{r,l}(\tilde{x}, \tilde{y}, \tilde{z}), \quad
z=\tilde{z}
\]
with $\Psi_{r,l}$ satisfying
\begin{equation}
\begin{cases}
  \Psi_{r,l}(\tilde x,0,\tilde z)=\psi(\tilde x,\tilde z)\\
  \pm\partial_{\tilde y}\Psi_{r.l}\ge \kappa_0>0
\end{cases}\end{equation}
for a positive constant $\kappa_0$.

Set
$$\tilde{U}_{r,l}(\tilde{x}, \tilde{y}, \tilde{z})=
{U}_{r,l}(\tilde{x}, \Psi_{r,l}(\tilde{x}, \tilde{y}, \tilde{z}), \tilde{z}),
\quad
\tilde{V}_{r,l}(\tilde{x}, \tilde{y}, \tilde{z})=
{V}_{r,l}(\tilde{x}, \Psi_{r,l}(\tilde{x}, \tilde{y}, \tilde{z}), \tilde{z})$$
and drop the tildes of notations $\tilde{U}_{r,l}(\tilde{x}, \tilde{y}, \tilde{z})$,
$\tilde{V}_{r,l}(\tilde{x}, \tilde{y}, \tilde{z})$ and $\Psi_{r,l}(\tilde{x}, \tilde{y}, \tilde{z})$
for simplicity in the following calculations.

For a fixed $X>0$, denote by
$\Omega_X=\{\xy~|~0\le x\le  X, y\in \R^+, z\in \R\}.$

For the contact discontinuity \eqref{vbstate}, we impose the following assumptions on the
perturbations:
\bel{3.3}
\begin{cases}
 {V}_r, ~{V}_l,
~\nabla\widetilde{\Psi}_r,
   ~\nabla\widetilde{\Psi}_l \in W^{2, \infty}(\Omega_X),\\
 V_{r}, ~V_l,
  ~\nabla\widetilde{\Psi}_r ~\text{and}
   ~\nabla\widetilde{\Psi}_l ~\text{have compact support in} ~(y,z)\in \R^2_+,\\
  \|(V_r, V_l)\|_{W^{2, \infty}(\Omega_X)}+
   \|(\nabla\widetilde{\Psi}_r, \nabla\widetilde{\Psi}_l)\|_{W^{2,
   \infty}(\Omega_X)} \leq K, \text{for a constant} ~K>0,
\end{cases}
\eeq
where
\bel{vbstate2}
\widetilde{\Psi}_r\xy=\Psi_{r}\xy-y, \quad
\widetilde{\Psi}_l\xy=\Psi_{l}\xy+y . \eeq

Letting $(U^{\pm}, \Phi^{\pm})$ be the small perturbation of the
contact discontinuity $(U_{r,l}\xy,$ $\Psi_{r,l}\xy)$,
from
\eqref{2.11} we get the following linearized equations of
$(U^\pm, \Phi^\pm)$ at $(U_{r,l},\Psi_{r,l})$:
\bel{vlieq} L'(U_{r,l}, \nabla\Psi_{r,l})(U^{\pm},
\Phi^{\pm})=f^{\pm}, \eeq where \bel{vdefL}
\begin{split}
L'(U_{r,l}, \nabla\Psi_{r,l})(U^{\pm}, \Phi^{\pm})=&L(U_{r,l},
\nabla\Psi_{r,l})U^{\pm}
  +C(U_{r,l}, \nabla U_{r,l}, \nabla\Psi_{r,l})U^\pm
\\
  &-\frac{\pd_y\Phi^{\pm}}{(\pd_y\Psi_{r,l})^2}\left( A_2(U_{r,l})-\pd_x\Psi_{r,l}A_1(U_{r,l})
  -\pd_z\Psi_{r,l}A_3(U_{r,l})\right)\pd_yU_{r,l}\\
&-\frac{1}{\pd_y\Psi_{r,l}}\left(\pd_x\Phi^{\pm}A_1(U_{r,l})+\pd_z\Phi^{\pm}A_3(U_{r,l})\right)\pd_yU_{r,l}
\end{split}
\eeq
in which
\bel{defl} L(U_{r,l}, \nabla\Psi_{r,l})U
=A_1(U_{r,l})\partial_xU+A_b(U_{r,l}, \nabla\Psi_{r,l})\partial_y
  U+A_3(U_{r,l})\partial_zU,\eeq
with
\[A_b(U, \nabla\Psi)=\frac{1}{\pd_y\Psi}(A_2(U)-\pd_x\Psi A_1(U)-
\pd_z\Psi A_3(U))
\]
and
\bel{defc}
\begin{split}
C(U_{r,l}, \nabla U_{r,l}, \nabla\Psi_{r,l})U=&(\nabla A_1(U_{r,l})U)\pd_xU_{r,l} +
(\nabla A_3(U_{r,l})U)\pd_zU_{r,l}\\
&\hspace{-.8in}+\frac{1}{\pd_y \Psi_{r,l}}
[\nabla A_2(U_{r,l})U-(\nabla A_1(U_{r,l})U)\pd_x\Psi_{r,l}-(\nabla A_3(U_{r,l})U)\pd_z\Psi_{r,l}]\pd_yU_{r,l}\,.
\end{split}
\eeq

When $\Psi_{r,l}(x,y,z)$ satisfy the eikonal equations
\bel{eikonal}
   u_{r,l} \partial_x \Psi_{r,l} -v_{r,l} +w_{r,l} \partial_{z}\Psi_{r,l}=0
\eeq
in $\{y\ge 0\}$,  we know that the boundary matrix
\[
A_b(U_{r,l}, \nabla\Psi_{r,l}) =\frac{1}{\pd_y\Psi_{r,l}}\begin{pmatrix}
  0&0&0&-\pd_x\Psi_{r,l}\\
  0&0&0&1\\
  0&0&0&-\pd_z\Psi_{r,l}\\
  -\pd_z\Psi_{r,l}&1&-\pd_z\Psi_{r,l}&0
\end{pmatrix},
\]
has a constant rank in the domain $\Omega_X$.

As the first order derivatives of $U^\pm$ and $\Phi^\pm$ are coupled
together in the equations \eqref{vlieq}, to deal with this
problem, as in \cite{ali}, by introducing the ``good unknowns"
\bel{vgu} U_{+}=U^{+}-\frac{\Phi^{+}}{\pd_y\Psi_r}\pd_y U_r, \quad
U_{-}=U^{-}-\frac{\Phi^{-}}{\pd_y\Psi_l}\pd_y U_l, \eeq we obtain the equations for
$U_{\pm}$,
 \bel{lieq2} L(U_{r,l},
\nabla\Psi_{r,l})U_{\pm}+\frac{\Phi^{\pm}}{\pd_y\Psi_{r,l}} \pd_y[L(U_{r,l},
\nabla\Psi_{r,l})U_{r,l}]+C(U_{r,l}, \nabla U_{r,l}, \nabla\Psi_{r,l})U_{\pm}=f^{\pm}. \eeq
in which $\Phi^{\pm}$ is appeared only in the zero-th order terms.  By shifting these zero-th order terms into the source terms $f^\pm$, we obtain that $U_\pm$ satisfy  the
following effective linear equations, 
\bel{lieqr} L'_{e}(U_{r,l}, \nabla\Psi_{r,l})U_{\pm}=L(U_{r,l},
\nabla\Psi_{r,l})U_{\pm}+C(U_{r,l}, \nabla U_{r,l}, \nabla\Psi_{r,l})U_{\pm}=f^{\pm}. \eeq

In terms of the good unknowns $U=(U_{+}, U_{-})^T$, the linearization of
the boundary conditions given in \eqref{2.11} is given by
\bel{lbc} B'_e(U,
\phi)=\underline{b}(x,z)\nabla\phi+\ol b(x,z)\phi
+\underline{M}(x,z)U|_{y=0}=g, \qquad
{\rm on}\quad y=0
\eeq with
$\phi=\Phi^{+}|_{y=0}=\Phi^{-}|_{y=0}$ and \bel{coebc}
\underline{b}(x,z)=\begin{pmatrix}
  u_r&w_r\\ u_l&w_l\\0&0
\end{pmatrix}_{|{y=0}},
\qquad \ol b=\underline{M}(x,z)
\begin{pmatrix}
  \frac{\pd_yU_r}{\pd_y\Psi_r}\\[3mm] \frac{\pd_yU_l}{\pd_y\Psi_l}
\end{pmatrix}_{|{y=0}}\,,
\eeq
\bel{coebc-1}
 \underline{M}(x,z)=\begin{pmatrix}
  \psi_x&-1&\psi_z&0&0&0&0&0\\
  0&0&0&0&\psi_x&-1&\psi_z&0\\
  0&0&0&1&0&0&0&-1
\end{pmatrix}.
\eeq

Therefore, the effective linear problem of $U$ is formulated as
\bel{lpb}
\begin{cases}
L'_eU_{+}=f^{+}, \quad L'_eU_{-}=f^{-}, \quad \text{in} \,\, \Omega_{X}\\
B_e'(U, \phi)=g, \quad \text{on} \,\, \{y=0\}
\end{cases}
\eeq where $U, \Phi, f^+, f^-$ and $g$ vanish in $\{x\leq0\}.$

This problem has been studied by authors in \cite{WangYu1} throughly. To recall the $L^2$ stability estimate given in \cite{WangYu1}, we
first introduce the weighted Sobolev space $H^s_{\gamma}$ for
$\gamma \geq 1$, $s\in\R$ as
\[
H^s_{\gamma}(\Omega_X)=\{u\in \mathcal{D}'(\Omega_X) \left. \right|
e^{-\gamma x}u\in H^s(\Omega_X)\},\] 
with the norm
\[\|u\|_{H_\gamma^s(\Omega_X)}= \|e^{-\gamma x} u\|_{H^s(\Omega_X)}.\]
The space $L^2(\R^{+}_y;~H^s_{\gamma}(\omega_X))$ defined in the domain
$\Omega_X$, is endowed with the norm
\[
\|u\|_{L_y^2(H_\gamma^s)}=
\left(\int_0^{+\infty}\|u(\cdot,y)\|_{H_\gamma^s(\omega_X)}^2 \dd
y\right)^{\frac{1}{2}},\]
where $\omega_X=\Omega_X\cap\{y=const.\}=\{(x,z)|~0\le x\le X, z\in\R\}$
and the space $H^k(\R^{+}_y;~H^s_{\gamma}(\omega_X))$ can be defined similarly.

In Theorem 4.1 of \cite{WangYu1}, by using the paradifferential calculus and constructing the Kreiss symmetrisers we have obtained the following energy estimate for the problem \eqref{lpb}:

\begin{theorem} (\cite{WangYu1})\label{L2estimate}
Let $(\ol U_r, \ol U_l)$ defined in \eqref{ubar} be the planar
supersonic contact discontinuity satisfying the stability assumptions \eqref{2.14} and
\eqref{2.15},  and its perturbed non-planar contact discontinuity
\eqref{vbstate} satisfies the condition \eqref{3.3} for a
constant $K>0$. Then for the linear problem \eqref{lpb},
 there exist constants $K_0>0$ depending on the contact discontinuity
  $(\ol U_r, \ol U_l)$, and  $\gamma_0\geq1$, $C_0>0$ depending on $K_0$
such that for all $K\leq K_0$, $\gamma \geq \gamma_0$ and $(U,
\phi)\in H_\gamma^2(\Omega_X)\times H_\gamma^2(\omega_X)$, one has
 \bel{vestimate}
 \gamma\|U\|_{L_\gamma^2(\Omega_X)}^2 +\|\PP U|_{y=0}\|_{L_\gamma^2(\omega_X)}^2
 +\|\phi\|_{H_\gamma^1(\omega_X)}^2 \leq C_0\left(
 \frac{1}{\gamma^3}\|f\|_{L_y^2(H_\gamma^1)}^2 +\frac{1}{\gamma^2}
 \|g\|_{H_\gamma^1(\omega_X)}^2
 \right),
 \eeq
where $U=(U_+, U_-)^T$, $f=(f^+, f^-)^T$ and
 \bel{vdefpp}
 \PP U_{\pm}=\begin{pmatrix}
    \psi_xU_{\pm,1}-U_{\pm,2}+\psi_zU_{\pm,3}\\
    U_{\pm,4}
  \end{pmatrix}\,.\eeq
\end{theorem}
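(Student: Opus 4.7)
The plan is to establish \eqref{vestimate} by a para-differential / Kreiss-symmetriser argument in the style of Coulombel--Secchi, carried out after symmetrising and paralinearising the variable-coefficient system \eqref{lpb}. First I would pass to the exponentially weighted unknowns $U^\sharp = e^{-\gamma x}U$, $\phi^\sharp = e^{-\gamma x}\phi$, so that $\partial_x$ is replaced by $\partial_x + \gamma$ and the weighted Sobolev norms become the standard ones. Using the eikonal relation \eqref{eikonal}, the boundary matrix $A_b(U_{r,l},\nabla\Psi_{r,l})$ on $\{y=0\}$ has the explicit low-rank form displayed in the excerpt, with two zero eigenvalues and two simple non-zero eigenvalues of opposite sign; this distinguishes the noncharacteristic components $\PP U_\pm = (\psi_x U_{\pm,1}-U_{\pm,2}+\psi_z U_{\pm,3},\,U_{\pm,4})^T$, whose traces we will be able to estimate, from the characteristic components, whose traces we will not.

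Next I would replace the coefficients of $L'_e$ and of $B'_e$ by their para-differential counterparts, absorbing the commutator errors into lower-order remainders of size $O(K\|U\|/\gamma)$, which can be hidden in the left-hand side once $\gamma \ge \gamma_0$ is large. Taking tangential Fourier--Laplace transform with dual variables $(\delta,\eta)$ to $(x,z)$ and freezing a boundary point, the system reduces to an ODE in $y$ for the symbol. The planar stability conditions \eqref{2.15} guarantee, via the analysis of the Lopatinskii determinant carried out in \cite{WangYu1}, that the uniform Lopatinskii condition fails only on a smooth manifold $\Upsilon$ in the hemisphere of frequencies $\{\mathrm{Re}\,\delta \ge 0\}\setminus\{0\}$, and that this failure is of first order (the determinant vanishes simply on $\Upsilon$ and does not vanish elsewhere). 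This is precisely the weak-stability situation in which a Kreiss symmetriser can be built, but only at the cost of a loss of $\gamma^{-1}$ in the interior energy and a loss of $\gamma^{-1/2}$ in the boundary trace.

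I would then construct a micro-local Kreiss symmetriser $R(x,z,\delta,\eta)$ by patching local symmetrisers: standard hyperbolic-region and elliptic-region constructions away from $\Upsilon$ and from the glancing set, a $K$-family construction near glancing points using the block structure of the symbol, and a degenerate construction near $\Upsilon$ where one eigenvalue of $R$ is of order $\gamma^{-2}$ compatible with the first-order vanishing of the Lopatinskii determinant. Taking a para-differential operator with symbol $R$ and integrating by parts the quantity $\mathrm{Re}\,(R\,\mathrm{Op}(\cdot)U,U)$ against $L'_e U$ yields, after using the boundary inequality satisfied by $R$ and the good unknowns, an estimate of the form
\begin{equation*}
\gamma \|U\|_{L^2}^2 + \|\PP U|_{y=0}\|_{L^2}^2 \le C_0\Bigl(\frac{1}{\gamma}\|L'_eU\|_{L^2}^2 + \frac{1}{\gamma}\|B'_e(U,\phi)\|_{L^2}^2\Bigr) + \text{l.o.t.}
\end{equation*}
Applying this at the level of one extra tangential derivative (i.e.\ to $\Lambda^{1/2}_\gamma U$ with $\Lambda_\gamma$ the usual tangential symbol of order one) and rebalancing the powers of $\gamma$ absorbs the weak-stability loss into the stated $\gamma^{-3}$ on the source and $\gamma^{-2}$ on the boundary datum of \eqref{vestimate}. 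The $H^1_\gamma$-estimate of $\phi$ is recovered from the first and second components of \eqref{lbc} by solving the hyperbolic boundary equation $\underline b\nabla\phi = g - \ol b\phi - \underline M U|_{y=0}$ for $\nabla\phi$, using that $\bar w_r\bar w_l<0$ in \eqref{2.14} makes the $2\times 2$ block of $\underline b$ invertible, and then integrating along the transport directions.

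The main obstacle is the construction of the symmetriser near the manifold $\Upsilon$ where weak stability degenerates: one must show that the first-order vanishing of the Lopatinskii determinant established in \cite{WangYu1}, combined with the characteristic boundary structure and the doubled unknown $(U_+,U_-)$, still permits a smooth $C^\infty$ symmetriser with a controlled boundary sign, and that the remainders generated by freezing coefficients and by the para-product errors are strictly lower-order (measured in powers of $\gamma$) so that they can be absorbed, giving the sharp loss exponents $\gamma^{-3}$ and $\gamma^{-2}$ rather than a larger loss.
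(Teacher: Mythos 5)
Your outline follows essentially the same route as the proof this paper relies on: the theorem is not reproved here but quoted from \cite{WangYu1}, where it is established exactly by paralinearizing the effective problem and constructing degenerate Kreiss symmetrisers to handle the first-order vanishing of the Lopatinskii determinant, with the front recovered algebraically from the boundary conditions thanks to the non-parallel tangential velocities. So your proposal is consistent in approach and in the loss structure ($\gamma^{-3}$ on $f$ in $L^2_y(H^1_\gamma)$, $\gamma^{-2}$ on $g$ in $H^1_\gamma$) with the cited proof.
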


\subsection{Higher order estimates of solutions to the linearized problem}

 In this section, we are going to derive the higher order
estimates of the solution $(U, \phi)$ to the linearized boundary
value problem \eqref{lpb}, this is the key step for studying the nonlinear problem \eqref{2.11} by using the
Nash-Moser-H\"ormander iteration scheme in next section.

Assuming that for a fixed $s\in \N$, the perturbation $(V_{r,l}, \nabla \widetilde \Psi_{r,l})$
belongs to $H_\gamma^{s+2}(\Omega_X)\cap H_\gamma^5(\Omega_X)$, the main result of this section is the following one:

\begin{theorem}\label{hsestimate}
Let $s\in \N$ and $X>0$. Assume that the non-planar contact discontinuity
given in \eqref{vbstate} satisfies \eqref{2.15}, \eqref{3.3} and
$(V_{r,l}, \nabla \widetilde \Psi_{r,l})\in
H_\gamma^{s+2}(\Omega_X)\cap H_\gamma^{5}(\Omega_X)$ with
\bel{asmp}
\|\nabla
\widetilde \Psi\|_{H_\gamma^{5}(\Omega_X)}+\|
V\|_{H_\gamma^{5}(\Omega_X)}\leq K,\eeq
with $\widetilde \Psi=(\widetilde \Psi_{r}, \widetilde \Psi_{l})^T$ and $V=(V_r, V_l)^T$.
Then, for the problem \eqref{lpb}, there exist constants
$K_s>0$ and $\gamma_s\ge 1$
depending on $s$,  such that for all $K\leq K_s$,
$\gamma\geq \gamma_s,$ and $(U, \phi)\in
(H_\gamma^{s+2}(\Omega_X)\times H_\gamma^{s+2}(\omega_X))\cap (H_\gamma^{5}(\Omega_X)\times H_\gamma^{5}(\omega_X))$, one has
\bel{test}
\begin{split}
&\sqrt{\gamma}\|U\|_{H_\gamma^s(\Omega_X)} +\|\PP
U|_{y=0}\|_{H_\gamma^s(\omega_X)}
+\|\phi\|_{H_\gamma^{s+1}(\omega_X)} \leq
C(K)\left\{\frac{1}{\gamma^{3/2}}\|f\|_{H_\gamma^{s+1}(\Omega_X)}
+\frac{1}{\gamma}\|g\|_{H_\gamma^{s+1}(\omega_X)}\right.\\
&
\hspace{.3in}+\left.\left(\frac{1}{\gamma^{3/2}}\|(V, \nabla \widetilde
\Psi)\|_{H_\gamma^{s+2}(\Omega_X)}+\frac{1}{\gamma}\|\pd_y
V_{|y=0}\|_{H_\gamma^{s}(\omega_X)}\right)(\|f\|_{H^4(\Omega_X)}
  +\|g\|_{H^4(\omega_X)})
\right\},
\end{split}
\eeq
where $C(K)$ is a positive constant depending on $K$.

\end{theorem}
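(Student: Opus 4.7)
The plan is to derive \eqref{test} by induction on the number of normal ($\partial_y$) derivatives, estimating tangential derivatives via an extension of the $L^2$ result of Theorem \ref{L2estimate}, then recovering normal derivatives of non-characteristic components algebraically from the equations, and finally closing the loop on the characteristic part through a linearized vorticity that satisfies a transport equation tangential to $\{y=0\}$.

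\textbf{Step 1: Tangential tame estimate.} First I would apply tangential derivatives $Z^\alpha := \partial_x^{\alpha_1}\partial_z^{\alpha_3}$ of total order $|\alpha|\leq s$ to \eqref{lieqr} and \eqref{lbc}. Each $Z^\alpha U_\pm$ solves the same type of boundary value problem with source
\[
f^{\pm}_\alpha = Z^\alpha f^{\pm} - [Z^\alpha, L'_e(U_{r,l},\nabla\Psi_{r,l})]U_\pm
\]
and an analogous commutator on the boundary. Theorem \ref{L2estimate} applied to each order yields the key inequality up to commutators; the heart of the matter is controlling these commutators by a tame Moser-type estimate, of the shape
\[
\|[Z^\alpha,L'_e]U_\pm\|_{L_y^2(H^1_\gamma)} \lesssim \|U\|_{H^{|\alpha|+1}_\gamma}\,\|(V,\nabla\widetilde\Psi)\|_{H^5_\gamma} + \|U\|_{H^5_\gamma}\,\|(V,\nabla\widetilde\Psi)\|_{H^{|\alpha|+2}_\gamma},
\]
and similarly for the boundary commutator involving $\underline b$, $\underline M$. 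The standard product/Moser inequalities in $H^s_\gamma$, together with the structural identity that the coefficients of $L'_e$ and of $\underline b,\ol b,\underline M$ are smooth functions of $(\overline U + V,\nabla\widetilde\Psi)$ vanishing at the background, give precisely the split appearing in \eqref{test}. Inserting this into the $L^2$ estimate and absorbing the high-frequency $U$ terms into the left-hand side for $\gamma$ large produces tame control of $Z^\alpha U_\pm$ and of $\phi$ in $H^{s+1}_\gamma$.

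\textbf{Step 2: Normal derivatives of non-characteristic components.} Because the boundary matrix $A_b(U_{r,l},\nabla\Psi_{r,l})$ is of constant rank $2$ with the explicit block structure displayed after \eqref{defc}, the system \eqref{lieqr} algebraically determines $\partial_y$ of the non-characteristic components $\mathbb{P}U_\pm = (\psi_x U_{\pm,1}-U_{\pm,2}+\psi_z U_{\pm,3},\,U_{\pm,4})^T$ in terms of tangential derivatives of $U_\pm$, the source $f^\pm$, and lower-order terms. Differentiating this relation and using Step~1 gives a tame bound on $\partial_y \mathbb P U_\pm$ in $H^s_\gamma$ (and higher-order normal derivatives are obtained by further differentiating in $y$ before solving, at the cost of an extra tangential derivative each time).

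\textbf{Step 3: Characteristic components via linearized vorticity.} The obstacle that remains is the characteristic part of $\partial_y U_\pm$, which is not algebraically given by the equations. Following the idea of \cite{cou2} mentioned in the introduction, I would introduce a linearized vorticity $W_\pm$ constructed from the velocity components of $U_\pm$ together with correction terms depending on $\nabla\widetilde\Psi_{r,l}$, designed so that each scalar component of $W_\pm$ satisfies a transport equation
\[
(u_{r,l}\partial_x + w_{r,l}\partial_z)W_\pm^{(j)} + \text{(lower order)} = F^\pm_j,
\]
whose characteristic field is tangential to $\{y=0\}$ by virtue of the eikonal equations \eqref{eikonal}. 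Standard energy estimates for such transport equations in $\{y\geq 0\}$ do not require any boundary condition and give $H^s_\gamma$-control of $W_\pm$ in terms of the $H^s_\gamma$ norms of the source, the initial trace, and the coefficients, again in tame form. Inverting the linear algebraic relation between $W_\pm$ and the characteristic components of $\partial_y U_\pm$, and combining with Step~2, yields a tame estimate of the full $\partial_y U_\pm$ in $H^s_\gamma$.

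\textbf{Step 4: Closing the induction.} I would now induct on the number $k$ of normal derivatives: assume the tame bound \eqref{test} for all mixed derivatives $\partial_y^j Z^\alpha$ with $j\leq k$ and $j+|\alpha|\leq s$; apply Step~2 to the $(k+1)$-st normal derivative of the non-characteristic part and Step~3 (applied to $\partial_y^k$-differentiated vorticity transport) to the characteristic part. Summing over all multi-indices and absorbing the $U$-terms on the right for $\gamma\geq\gamma_s$ sufficiently large gives \eqref{test}. The main technical obstacle is bookkeeping the $\gamma$-weights: the $L^2$ estimate of Theorem \ref{L2estimate} loses $\gamma^{-3/2}$ on the interior source and $\gamma^{-1}$ on the boundary source, and these exact weights must be preserved through every commutator, through the algebraic inversion in Step~2, and through the transport estimate in Step~3, so that the Moser-type split producing the last term of \eqref{test} (with the $H^4$ low norms of $f,g$ multiplied by the $H^{s+2}_\gamma$ high norm of the coefficients) appears with precisely the weights claimed.
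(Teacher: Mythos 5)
Your four-step plan (tangential estimate via Theorem \ref{L2estimate} and commutators, algebraic recovery of normal derivatives of the non-characteristic part, a linearized vorticity transport for the characteristic part, and an induction on normal derivatives) is exactly the architecture of the paper's proof, and the vorticity construction you describe matches the one the paper introduces in \eqref{defxi}--\eqref{eqxi} and \eqref{defxi-}. Two remarks are in order, one structural and one a genuine gap.

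On the structural side, the paper first applies the change of variables $W^\pm=T(\nabla\Psi_{r,l})U_\pm$ multiplied by $A_0(\nabla\Psi_{r,l})$, which makes the boundary matrix literally $\Lambda_2=\mathrm{diag}(0,0,1,1)$. This is not essential in spirit, but it is what makes the algebraic inversion in your Step 2 and the bookkeeping in Step 4 clean: the non-characteristic block $W^{nc}$ is exactly the one whose $\partial_y$ is given by the equations, and the characteristic block $W^c=(W_1^\pm,W_2^\pm)$ is what the identities \eqref{yc1}--\eqref{yc2} express through $(\xi_\pm,\zeta_\pm)$ and tangential derivatives. You should also be careful in Step 1: the commutator estimate should produce, after the factor $\gamma^{-3/2}$ from Theorem \ref{L2estimate}, a term $\gamma^{-1/2}\|W\|_{L^2_y(H^s_\gamma)}$ that can be absorbed into $\sqrt\gamma\|W\|_{L^2_y(H^s_\gamma)}$ on the left; the paper achieves this by using the equivalence $\|F\|_{L^2_y(H^1_\gamma)}\simeq\gamma\|F\|_{L^2_\gamma}+\|\partial_T F\|_{L^2_\gamma}$ and keeping at most $s$ derivatives on $W$ in the highest-regularity commutator term, with the low factor being $\|W\|_{L^\infty}$ rather than a Sobolev norm. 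Your proposed split $\|U\|_{H^{|\alpha|+1}_\gamma}\|(V,\nabla\widetilde\Psi)\|_{H^5_\gamma}+\ldots$ is not absorbable as stated, because $\|U\|_{H^{s+1}_\gamma}$ exceeds the left-hand regularity; you need to separate out the piece with an explicit extra $\gamma$.

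The genuine gap is at the end: your Step 4 only absorbs high-norm $U$ terms by taking $\gamma$ large, but that cannot produce the tame factor $(\|f\|_{H^4(\Omega_X)}+\|g\|_{H^4(\omega_X)})$ that appears in \eqref{test}. After all the commutator and normal-derivative estimates, the right-hand side still contains products like $\|U\|_{W^{1,\infty}(\Omega_X)}\|(V,\nabla\widetilde\Psi)\|_{H^{s+2}_\gamma(\Omega_X)}$ and $\bigl(\|\mathbb{P}U|_{y=0}\|_{L^\infty(\omega_X)}+\|\phi\|_{W^{1,\infty}(\omega_X)}\bigr)\|(V,\partial_y V,\nabla\widetilde\Psi)|_{y=0}\|_{H^s_\gamma(\omega_X)}$. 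To replace these $L^\infty$- and $W^{1,\infty}$-norms of the unknowns by norms of the data, the paper runs the intermediate estimate \eqref{hu} at the fixed low order $s=3$, uses the standing smallness hypothesis $\|\nabla\widetilde\Psi\|_{H^5_\gamma}+\|V\|_{H^5_\gamma}\le K$ to absorb the nonlinear coupling, fixes $\gamma$ large and then $\gamma X\le 1$ to control the weights $e^{\gamma X}$, and concludes $\|U\|_{W^{1,\infty}(\Omega_X)}+\|\mathbb P U|_{y=0}\|_{L^\infty(\omega_X)}+\|\phi\|_{W^{1,\infty}(\omega_X)}\le C(K)(\|f\|_{H^4(\Omega_X)}+\|g\|_{H^4(\omega_X)})$. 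This bootstrap is what gives \eqref{test} its stated form; without it your proof produces a tame estimate, but with a different (and less useful) low-norm factor.
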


In the proof of this theorem, we shall always use $C(K)$ to denote a general positive constant depending on $K$, which may change from line to line, and shall frequently use the
following elementary inequalities, which can be found in textbooks, e.g. \cite{evans}:

\begin{enumerate}
\item[(1)] The Gagliardo-Nirenberg inequality,
\[
\|\pd^\alpha u\|_{L_\gamma^p(\Omega_X)}\leq
C\|u\|_{L^\infty(\Omega_X)}^{1-2/p}\|u\|_{H_\gamma^s(\Omega_X)}^{2/p}
\]
with $\frac{2}{p}=\frac{|\alpha|}{s}$, for all $u\in
H_\gamma^s(\Omega_X) \bigcap L^\infty(\Omega_X).$

\item[(2)] Let $F$ be a $C^\infty$ function defined on $\R^n$
and satisfy $F(0)=0$. Then, for all $u\in
H^s_\gamma(\Omega_X)\cap L^\infty(\Omega_X),$ one has \bel{ff}
\|F(u)\|_{H_\gamma^s(\Omega_X)}\leq
C(\|u\|_{L^\infty(\Omega_X)})\|u\|_{H_\gamma^S(\Omega_X)}.\eeq
\end{enumerate}

To prove the higher order estimate \eqref{test}, first we shall study tangential derivatives by using the $L^2-$estimate given in Theorem \ref{L2estimate}, then from the equations  \eqref{lpb} we estimate the normal derivatives of the non-characteristic components of unknowns, finally to estimate the normal derivatives of the characteristic components we study the problems of vorticity fields derived from the problem \eqref{lpb}.
These estimates will be given in the following subsections.

\subsubsection{Estimate of tangential derivatives}

We introduce the following
transformations in the problem \eqref{lpb} to diagonalize the boundary matrices $A_b(U_{r,l},
\nabla\Psi_{r,l})$ of the effective linear equations \eqref{lieqr}, 
\bel{utow}
 W^+=T(\nabla \Psi_r)U_+, \quad
W^-=T(\nabla \Psi_l)U_-,
\eeq
where
\[T(\nabla\Psi)=\begin{pmatrix}
  1& 0& -\pd_x\Psi& -\pd_x\Psi\\
  \pd_x\Psi& \pd_z\Psi& 1& 1\\
  0& 1& -\pd_z\Psi& -\pd_z\Psi\\
  0& 0& \lc\pd\Psi\rc& -\lc\pd\Psi\rc
\end{pmatrix}^{-1}
\]
with $\lc\pd\Psi\rc=\sqrt{1+(\pd_x\Psi)^2+(\pd_z\Psi)^2}$, and
multiply \[ A_0(\nabla\Psi_{r,l})=\text{diag}(1, 1,
\frac{\pd_y\Psi_{r,l}}{\lc\pd\Psi_{r,l}\rc},
-\frac{\pd_y\Psi_{r,l}}{\lc\pd\Psi_{r,l}\rc})
\] from the left hand side of the equations of $W^+, W^-$ respectively. It's easy to get that
$W=(W^+, W^-)^T$ satisfies the following problem,
\bel{pb}
\begin{cases}
A_1^r\pd_xW^{+}+\Lambda_2\pd_yW^{+}+A_3^r\pd_zW^{+}+A_0^rC^rW^{+}=F^{+},
\quad {\rm in} \,\,\Omega_X\\
A_1^l\pd_xW^{-}+\Lambda_2\pd_yW^{-}+A_3^l\pd_zW^{-}+A_0^lC^lW^{-}=F^{-},
\quad {\rm in} \,\,\Omega_X\\
\underline{b}\nabla\phi +\ol b\phi +\underline{M}\begin{pmatrix}
  T^{-1}_r&0\\ 0&T_l^{-1}
\end{pmatrix}W|_{y=0} =g,\\
 (W^+,W^-,\phi)|_{x\le 0}=0,
\end{cases}
\eeq where $\Lambda_2=\text{diag}(0, 0, 1,
  1)$, $F^{\pm}=A_0^{r,l}T_{r,l}f^{\pm}$ and $g$ vanish for $x\le 0$,
\begin{equation}\label{vdefac}
\begin{split}
  &A_1^{r,l}=A_0^{r,l}T_{r,l}A_1T_{r,l}^{-1}, \quad   A_3^{r,l}=A_0^{r,l}T_{r,l}A_3
  T^{-1}_{r,l}, \\
  &C^{r,l}=T_{r,l}A_1\pd_xT_{r,l}^{-1}+T_{r,l}A_3\pd_zT_{r,l}^{-1}+T_{r,l}A_b\pd_y
  T_{r,l}^{-1}+T_{r,l}C T_{r,l}^{-1}
\end{split}
\end{equation}
with notations $T_{r,l}=T(\nabla\Psi_{r,l}),~
A_0^{r,l}=A_0(\nabla\Psi_{r,l})$.

 From \eqref{vdefac}, we know that $A_0^{r,l}$, $A_{1}^{r,l}$ and $A_{3}^{r,l}$ are
$C^\infty$ functions of $(U_{r,l}, \nabla\Psi_{r,l})$, and
$C^{r,l}$ are $C^\infty$ functions of
$(U_{r,l}, \nabla U_{r,l}, \nabla\Psi_{r,l}, \nabla\partial_x\Psi_{r,l}, \nabla\partial_z\Psi_{r,l})$.

\begin{lemma}\label{let}
  For any $s\in \N$, under the assumptions of Theorem \ref{hsestimate}, there exists a constant $C(K)>0$ such that the following estimate holds for the solution of \eqref{pb},
  \bel{testimate}
  \begin{split}
  &\sqrt{\gamma}\|W\|_{L_y^2(H_\gamma^s)} +\|W^{nc}_{|y=0}\|_{H_\gamma^s(\omega_X)}
  +\|\phi\|_{H_\gamma^{s+1}(\omega_X)} \\
  &\leq C(K)\left\{\frac{1}{\gamma^{3/2}}\|F\|_{L_y^2(H_\gamma^{s+1})}
  +\frac{1}{\gamma}\|g\|_{H_\gamma^{s+1}(\omega_X)}
  +\frac{1}{\gamma^{3/2}}\|W\|_{L^{\infty}(\Omega_X)}
\left(  \|(V, \nabla \widetilde \Psi)\|_{L_y^2(H_\gamma^{s+2})}+\|\partial_y V\|_{L_y^2(H_\gamma^{s+1})}\right)\right.\\
  &\left.\quad+\frac{1}{\gamma}\left(\| W^{nc}_{|y=0}\|_{L^\infty(\omega_X)}
  +\|\phi\|_{W^{1,\infty}(\omega_X)}\right)\|(V, \pd_y V,
   \nabla \widetilde \Psi)_{|y=0}\|_{H_\gamma^{s}(\omega_X)}\right\},
  \end{split}
  \eeq
where $W^{nc}=(W_3^+, W_4^+, W_3^-, W_4^-)^T$.
\end{lemma}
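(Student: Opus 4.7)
The strategy is induction on $s$ via tangential differentiation and reapplication of the $L^2$ estimate in Theorem \ref{L2estimate}. I would apply any tangential derivative $\partial^\alpha=\partial_x^{\alpha_1}\partial_z^{\alpha_2}$ with $|\alpha|\leq s$ to the problem \eqref{pb}. Because the boundary matrix $\Lambda_2=\text{diag}(0,0,1,1)$ has constant entries, the commutator $[\partial^\alpha,\Lambda_2\partial_y]$ vanishes, and $\partial^\alpha W$ satisfies a problem of the same structural form as \eqref{pb} with sources $\partial^\alpha F^\pm+R_\alpha^\pm$ in the interior and $\partial^\alpha g+S_\alpha$ on $\{y=0\}$, where
\begin{equation*}
R_\alpha^\pm=-[\partial^\alpha,A_1^{r,l}]\partial_x W^\pm-[\partial^\alpha,A_3^{r,l}]\partial_z W^\pm-[\partial^\alpha,A_0^{r,l}C^{r,l}]W^\pm,
\end{equation*}
and $S_\alpha$ collects the commutators of $\partial^\alpha$ with the smooth boundary coefficients $\underline{b}$, $\bar{b}$, $\underline{M}$, and $\text{diag}(T_r^{-1},T_l^{-1})|_{y=0}$.

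Next, I would apply the $L^2$ estimate \eqref{vestimate} to each $\partial^\alpha W$, after transferring it from the $U$-coordinates (the transformation $W^\pm=T(\nabla\Psi_{r,l})U_\pm$ is smooth and invertible by \eqref{psi-def}), and sum the squared inequalities over $|\alpha|\leq s$. The principal sources $\partial^\alpha F^\pm$ and $\partial^\alpha g$ produce precisely the contributions $\gamma^{-3/2}\|F\|_{L^2_y(H^{s+1}_\gamma)}$ and $\gamma^{-1}\|g\|_{H^{s+1}_\gamma}$ in \eqref{testimate}, on account of the extra tangential derivative carried by the right-hand side of \eqref{vestimate}. It then remains to estimate $R_\alpha$ in $L^2_y(H^1_\gamma)$ and $S_\alpha$ in $H^1_\gamma(\omega_X)$ in a tame way. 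The key tool is a Moser-type commutator bound of the schematic form
\begin{equation*}
\|[\partial^\alpha,A]B\|_{L^2_y(H^1_\gamma)}\leq C\bigl(\|B\|_{L^\infty}\|A\|_{L^2_y(H^{s+1}_\gamma)}+\|A\|_{W^{1,\infty}}\|B\|_{L^2_y(H^s_\gamma)}\bigr),
\end{equation*}
combined with the composition estimate \eqref{ff} to bound $\|A_j^{r,l}\|_{L^2_y(H^{s+1}_\gamma)}$ and $\|A_0^{r,l}C^{r,l}\|_{L^2_y(H^{s+1}_\gamma)}$ by $C(K)\bigl(\|(V,\nabla\widetilde\Psi)\|_{L^2_y(H^{s+2}_\gamma)}+\|\partial_y V\|_{L^2_y(H^{s+1}_\gamma)}\bigr)$; the separate $\partial_y V$ contribution enters through the normal-derivative factor in the definition of $C^{r,l}$ in \eqref{defc}. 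The solution-high piece $\|A\|_{W^{1,\infty}}\|W\|_{L^2_y(H^s_\gamma)}$ is absorbed into $\sqrt{\gamma}\,\|W\|_{L^2_y(H^s_\gamma)}$ on the left of \eqref{testimate} once $\gamma\geq\gamma_s$ is taken sufficiently large. An entirely analogous tangential commutator analysis on $\omega_X$ handles $S_\alpha$, yielding the boundary factor $\|(V,\partial_y V,\nabla\widetilde\Psi)|_{y=0}\|_{H^s_\gamma(\omega_X)}$ paired with $\|W^{nc}|_{y=0}\|_{L^\infty(\omega_X)}+\|\phi\|_{W^{1,\infty}(\omega_X)}$, via the dependence of $\underline{b},\bar b,\underline M$ on the background traces and the front.

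The main obstacle is maintaining the tame structure: each commutator must be split so that either the top-order derivative falls on the coefficient (paired only with an $L^\infty$ norm of $W$, of its non-characteristic trace, or of $(\phi,\nabla\phi)$), or it falls on the unknown (paired with $W^{1,\infty}$ norms of the coefficients, which are uniformly controlled by the hypothesis \eqref{asmp} via Sobolev embedding from $H^5_\gamma$). A Bony-type paradifferential decomposition of the relevant products, in the same spirit as the one used in \cite{WangYu1} to construct the Kreiss symmetrisers, is the cleanest way to realize this splitting. A secondary point is that the characteristic nature of the boundary creates no new difficulty at the tangential stage: since \eqref{vestimate} only controls the non-characteristic trace, only $W^{nc}|_{y=0}$ appears in \eqref{testimate}, and the normal derivatives of the characteristic components of $W$ are postponed to the analysis of the equations and of the vorticity fields in the subsequent subsections. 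Finally, converting back $W\leftrightarrow U$ produces only lower-order perturbations of the same tame form, and \eqref{testimate} follows.
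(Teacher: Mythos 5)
Your overall strategy — differentiate tangentially, reapply the $L^2$ estimate of Theorem~\ref{L2estimate}, bound the commutators by Moser/Gagliardo--Nirenberg, and absorb the top-order solution terms by taking $\gamma$ large after a $\gamma^{s-l}$-weighted summation — is the same as the paper's. However, there is a genuine gap in where you place the first-order commutators. You define the interior source as $\partial^\alpha F^\pm + R_\alpha^\pm$ with $R_\alpha^\pm$ containing \emph{all} of $[\partial^\alpha, A_1^{r,l}]\partial_x W^\pm$, including the pieces where only one tangential derivative falls on the coefficient, say $\partial^\beta A_1^{r,l}\,\partial^{\alpha-\beta}\partial_x W^\pm$ with $|\beta|=1$ and $|\alpha|=s$. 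That term already carries $s$ tangential derivatives of $W$; once you measure it in $L^2_y(H^1_\gamma)$, as \eqref{vestimate} requires, the extra tangential derivative produces $\|\partial^\beta A_1^{r,l}\|_{L^\infty}\,\|\partial_T^{\,s+1}W\|_{L^2_\gamma}$, i.e.\ an $(s+1)$-th order tangential derivative of $W$. After multiplying by $\gamma^{-3/2}$ this is \emph{not} absorbable by $\sqrt{\gamma}\,\|W\|_{L^2_y(H^s_\gamma)}$, and it is also not present on the right of \eqref{testimate} — so your claimed absorption of ``$\|A\|_{W^{1,\infty}}\|W\|_{L^2_y(H^s_\gamma)}$'' does not match what your own schematic Moser bound actually yields with $B=\partial_x W$ (which gives $\|A\|_{W^{1,\infty}}\|\partial_x W\|_{L^2_y(H^s_\gamma)}\lesssim\|A\|_{W^{1,\infty}}\|W\|_{L^2_y(H^{s+1}_\gamma)}$, one derivative too many).

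The paper's proof avoids exactly this loss by \emph{not} putting the $|\beta|=1$ commutators into the source: in \eqref{eqwl1} those terms are kept on the left-hand side and re-interpreted as part of a modified zero-order coefficient $\widetilde C^{r}=A_0^rC^r+\sum_{|\beta|=1}C_{\alpha,\beta}(\partial_T^\beta A_1^r+\partial_T^\beta A_3^r)$ acting on $l$-th order tangential derivatives of $W$. This $\widetilde C^r$ is only $W^{1,\infty}$ rather than $W^{2,\infty}$, but — as the paper points out — Theorem~\ref{L2estimate} only needs the zero-order coefficient in $W^{1,\infty}$, so the $L^2$ estimate \eqref{esh} still applies to the modified system and the offending derivative never appears in the source norm. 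Only the $|\beta|\ge2$ commutators (which drop the order on $W$ by at least one) and $[\partial_T^\alpha, A_0^r C^r]W^+$ go into the source $\hf^{(l)}$, and these are then treated with the Gagliardo--Nirenberg bounds exactly as you describe. Your proposal needs this structural step to be correct; without it the top-order commutator is uncontrolled.
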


\begin{proof}
There are three steps to obtain the estimate \eqref{testimate}.

(1) Define $l$-th order tangential operator
$\pd_T^\alpha=\pd_x^{\alpha_1}\pd_z^{\alpha_2}$ with
$|\alpha|=\alpha_1+\alpha_2=l$, for some $l\le s$. From \eqref{pb}, we know that
\[
A_1^r\pd_x\pd_T^\alpha W^++\Lambda_2\pd_y\pd_T^\alpha
W^++A_3^r\pd_z\pd_T^\alpha W^++A_0^rC^r\pd_T^\alpha W^+
+[\pd_T^\alpha, ~A_1^r\pd_x+A_3^r\pd_z+A_0^rC^r]W^+=\pd_T^\alpha
F^+,
\]
where $[\cdot,\cdot]$ denotes the commutator. We introduce the notation
$a^{(l)}$ as an element of the set $\{\pd^\alpha_T a~:~|
\alpha|=l\}$ for any function $a$ belonging to $W^{s,
\infty}(\Omega_X)$ or $W^{s, \infty}(\omega_X)$, and rewrite the
above equations as \bel{eqwl1}
\begin{split}
&A_1^r\pd_xW_+^{(l)}+\Lambda_2\pd_yW_+^{(l)}+A_3^r\pd_zW_+^{(l)}+A_0^rC^rW_+^{(l)}
+\sum\limits_{|\beta|=1} C_{\alpha, \beta} (\pd_T^\beta
A_1^r\pd_T^{\alpha-\beta}
\pd_xW^++\pd_T^\beta A_3^r\pd_T^{\alpha-\beta}\pd_zW^+)\\
&\hspace{.2in}=\pd_T^\alpha F^+ -\sum\limits_{|\beta|\geq 2, \beta\leq \alpha}
C_{\alpha, \beta} (\pd_T^\beta
A_1^r\pd_T^{\alpha-\beta}\pd_xW^++\pd_T^\beta
A_3^r\pd_T^{\alpha-\beta}\pd_zW^+) -[\partial_T^\alpha, A_0^rC^r]W^+\,,
\end{split}
\eeq where $C_{\alpha, \beta}$ are constants depending on $\alpha,
\beta$. The equations of $W_-^{(l)}$ are similar to \eqref{eqwl1}.
The corresponding boundary conditions of
$W^{(l)}=(W_+^{(l)},W_-^{(l)})^T$ on $\{y=0\}$ are \bel{bcwl1} \underline{b} \nabla
\pd_T^\alpha\phi +\overline{b}\pd_T^\alpha\phi +M \pd_T^\alpha
W^{nc} =\pd_T^\alpha g
-[\partial_T^\alpha, \underline{b} \nabla
 +\overline{b}]\phi -[\pd_T^\alpha, M]
W^{nc}\,.
\eeq
For simplicity of notations,
we rewrite the above problem of $W^{(l)}=(W^{(l)}_+, W^{(l)}_-)^T$ as \bel{eqwl}
\begin{cases}
   \ha_1 \pd_x W^{(l)}+\Lambda_4\pd_yW^{(l)} +\ha_3\pd_zW^{(l)} +\hc W^{(l)}
   =\hf^{(l)}, \quad \text{in} ~\Omega_X\\
  \underline{b} \nabla \phi^{(l)} +\overline{b}\phi^{(l)} +M W^{(l), nc}
  =g^{(l)}, \quad \text{on}~ \{y=0\}
\end{cases}
\eeq where $\Lambda_4=\text{diag}(0,0,1,1,0,0,1,1),$
\[
\ha_{k}=\begin{pmatrix}
  A_{k}^r &\\ &A_{k}^l
\end{pmatrix} \quad (k=1,3), 
~ \hc=\begin{pmatrix}
  \widetilde C^r &\\ &\widetilde C^l
\end{pmatrix},
~ \hf^{(l)}=(F_+^{(l)}, F_-^{(l)})^T,
\]
 with $\widetilde C^r=A_0^rC^r+\sum\limits_{|\beta|=1}
C_{\alpha, \beta} (\pd_T^\beta A_1^r+\pd_T^\beta A_3^r),$
$F_+^{(l)},g^{(l)}$ denoting the right hand sides of \eqref{eqwl1}
and \eqref{bcwl1} respectively, and $F_-^{(l)}$ defined similar
to $F_+^{(l)}$.
$M$ in the boundary conditions is the nonzero submatrix of $\underline{M}\begin{pmatrix}
  T^{-1}_r&0\\ 0&T_l^{-1}
\end{pmatrix}$,

Noting that the coefficients in the equations given in \eqref{eqwl}
all belong to $\w2(\Omega_X)$ except for $\hc\in W^{1,
\infty}(\Omega_X),$ we can apply Theorem \ref{L2estimate} in the problem \eqref{eqwl} to get
\bel{esh} \gamma\|W^{(l)}\|_{L_\gamma^2(\Omega_X)}^2 +\|
W^{(l),nc}_{|y=0}\|_{L_\gamma^2(\omega_X)}^2
 +\|\phi^{(l)}\|_{H_\gamma^1(\omega_X)}^2 \leq C_0\left(
 \frac{1}{\gamma^3}\|\hf^{(l)}\|_{L_y^2(H_\gamma^1)}^2 +\frac{1}{\gamma^2}\|g^{(l)}\|_{H_\gamma^1(\omega_X)}^2
 \right).
\eeq

(2) To obtain \eqref{testimate}, it remains to estimate the source
terms $\hf^{(l)}$ and $g^{(l)}.$ From \eqref{eqwl1},  we get
\[\begin{split} F_+^{(l)}=&
\pd_T^\alpha F^+ -\sum\limits_{|\beta|\geq 2, \beta\leq \alpha}
C_{\alpha, \beta} (\pd_T^\beta
A_1^r\pd_T^{\alpha-\beta}\pd_xW^++\pd_T^\beta
A_3^r\pd_T^{\alpha-\beta}\pd_zW^+) -[\partial_T^\alpha, A_0^rC^r]W^+.
\end{split}\]
Obviously, we have \bel{fl} \|\pd_T^\alpha
F^+\|_{L_y^2(H_\gamma^1)}\leq \|F^+\|_{L_y^2(H_\gamma^{l+1})}, \quad
|\alpha|= l. \eeq
Applying H\"{o}lder's and Gagliardo-Nirenberg's
inequalities for $\beta\le \alpha$ with $|\beta|\ge 2$, $|\alpha|=l$,
one has
\[
\|\pd_T^\beta A_1^r\pd_T^{\alpha-\beta}\pd_x
W^+\|_{L^2_\gamma(\Omega_X)}\leq C(K)\left(\|W^+\|_{L_y^2(H_\gamma^l)}
+\|(V_r, \nabla \widetilde
\Psi_r)\|_{L_y^2(H_\gamma^{l+1})}\|W^+\|_{L^{\infty}(\Omega_X)}\right),
\]
and
\[
\|\pd_T(\pd_T^\beta A_1^r\pd_T^{\alpha-\beta}\pd_x
W^+)\|_{L^2_\gamma(\Omega_X)}\leq
C(K)\left(\|W^+\|_{L_y^2(H_\gamma^l)} +\|(V_r, \nabla
\widetilde
\Psi_r)\|_{L_y^2(H_\gamma^{l+2})}\|W^+\|_{L^\infty(\Omega_X)}\right).
\]
Thus, by using the following
equivalent relation
\[
\|F\|_{L_y^2(H_\gamma^1)}\simeq\gamma\|F\|_{L^2_\gamma}+\|\pd_T
F\|_{L^2_\gamma},
\]
we obtain \bel{a1} \|\pd_T^\beta A_1^r\pd_T^{\alpha-\beta}\pd_x
W^+\|_{L_y^2(H^1_\gamma)}\leq
C(K)\left(\gamma\|W^+\|_{L_y^2(H_\gamma^l)} +\|(V_r, \nabla
\widetilde
\Psi_r)\|_{L_y^2(H_\gamma^{l+2})}\|W^+\|_{L^\infty(\Omega_X)}\right). \eeq

Similarly, we can deduce that
\bel{a3}
\|\pd_T^\beta A_3^r\pd_T^{\alpha-\beta}\pd_z W^+\|_{L_y^2(H^1_\gamma)}\leq C(K)
  \left(\gamma\|W^+\|_{L_y^2(H_\gamma^l)}
+\|(V_r, \nabla \widetilde
\Psi_r)\|_{L_y^2(H_\gamma^{l+2})}\|W^+\|_
{L^\infty(\Omega_X)}\right),\eeq
for $\beta\le \alpha$ with $|\beta|\ge 2, |\alpha|=l$, and
\bel{a3-1}
\|\pd_T^\beta (A_0^rC^r)\pd_T^{\alpha-\beta}
W^+\|_{L_y^2(H^1_\gamma)}\leq
C(K)\left(\gamma\|W^+\|_{L^2(H_\gamma^{l-1})} +\|(V_r, \partial_yV_r, \nabla
\widetilde
\Psi_r)\|_{L_y^2(H_\gamma^{l+1})}\|W^+\|_{L^\infty(\Omega_X)}\right)
\eeq
as $\beta\le \alpha$ with $|\beta|\ge 1$, by noting that $A_0^rC^r$ is a $C^\infty$ function of $(U_r,
 \nabla U_r, \nabla \Psi_r, \nabla\partial_T\Psi_r)$ which vanishes at the
origin while $A_{3}^r$ are $C^\infty$ of $(U_r, \nabla \Psi_r)$.

Adding \eqref{fl}, \eqref{a1} and \eqref{a3}-\eqref{a3-1}, we have
\begin{equation}\label{4.33}
\begin{array}{ll}
\|F_+^{(l)}\|_{L_y^2(H_\gamma^1)} \leq
& C(K)\left\{
\|F^+\|_{L_y^2(H_\gamma^{l+1})}+ \gamma\|W^+\|_{L_y^2(H_\gamma^l)}\right.\\[2mm]
&\left.+\left(\|(V_r, \nabla \widetilde
\Psi_r)\|_{L_y^2(H_\gamma^{l+2})}+\|\partial_y V_r\|_{L_y^2(H_\gamma^{l+1})}\right)
\|W^+\|_{L^\infty(\Omega_X)}\right\}.
\end{array}
\end{equation}
One can have a similar estimate for $F^{(l)}_-$,  and conclude the following estimate for the source term of the equation given in \eqref{eqwl},
\bel{hfl}
\begin{array}{l}
\|\hf^{(l)}\|_{L_y^2(H_\gamma^1)} \leq
C(K)\left\{
\|F\|_{L_y^2(H_\gamma^{l+1})}+ \gamma\|W\|_{L_y^2(H_\gamma^l)}
+(\|(V, \nabla \widetilde
\Psi)\|_{L_y^2(H_\gamma^{l+2})}+\|\partial_y V\|_{L_y^2(H_\gamma^{l+1})})
\|W\|_{L^\infty(\Omega_X)}\right\},
\end{array}\eeq
where $W=(W^+, W^-)^T$, $F=(F^+, F^-)^T$, $V=(V_r,
V_l)^T$ and $\widetilde \Psi=(\widetilde \Psi_r, \widetilde
\Psi_l)^T$.

The estimate of the term $g^{(l)}$ can be studied similarly. From the right hand side of \eqref{bcwl1}, we get
\bel{hg}
\begin{split}
\|g^{(l)}\|_{H_\gamma^1(\omega_X)} \leq &C(K)\left\{
\|g\|_{H_\gamma^{l+1}(\omega_X)}+
\|\phi\|_{H_\gamma^{l+1}(\omega_X)}+\|W^{nc}_{|y=0}\|_{H_\gamma^{l-1}(\omega_X)}\right.\\
&\left.+\|(V, \pd_y V, \nabla \widetilde
\Psi)_{|y=0}\|_{H_\gamma^{l}(\omega_X)}\|\phi\|_{\ww(\omega_X)}+
\|\nabla \widetilde
\Psi_{|y=0}\|_{H_\gamma^{l}(\omega_X)}\|W^{nc}_{|y=0}\|_{L^\infty(\omega_X)}\right\}.\\
\end{split}
\eeq

(3) Plugging \eqref{hfl} and \eqref{hg} into the right hand side of
\eqref{esh}, one has \bel{tesimate1}
\begin{split}
&\sqrt{\gamma}\|W^{(l)}\|_{L_\gamma^2(\Omega_X)} +\|
W^{(l),nc}_{|y=0}\|_{L_\gamma^2(\omega_X)}
 +\|\phi^{(l)}\|_{H_\gamma^1(\omega_X)} \\
\leq& C(K)\left\{\gamma^{-3/2}\left(\|F\|_{L_y^2(H_\gamma^{l+1})}
+\gamma\|W\|_{L_y^2(H_\gamma^l)}+(\|(V, \nabla \widetilde
\Psi)\|
_{L_y^2(H_\gamma^{l+2})}+\|\partial_y V\|_{L_y^2(H_\gamma^{l+1})})
\|W\|_{L^\infty(\Omega_X)}\right)\right.\\
&+\gamma^{-1}\left(\|g\|_{H_\gamma^{l+1}(\omega_X)}
+\|\phi\|_{H_\gamma^{l+1}(\omega_X)}+\|W^{nc}_{|y=0}\|_{H_\gamma^l(\omega_X)}\right.\\
&\left.\left.+\|(V, \pd_y V, \nabla
\widetilde
\Psi)_{|y=0}\|_{H_\gamma^{l}(\omega_X)}\|\phi\|_{\ww(\omega_X)}+ \|\nabla
\widetilde
\Psi_{|y=0}\|_{H_\gamma^{l}(\omega_X)}\|
W^{nc}_{|y=0}\|_{L^\infty(\omega_X)}\right) \right\}.
\end{split}
\eeq
Then, multiplying $\gamma^{s-l}$ on \eqref{tesimate1} and
taking the summation for $l$ from 0 to $s$, we obtain the estimate
\eqref{testimate} after absorbing the following term
\[
\gamma^{-1/2}\|W\|_{L_y^2(H_\gamma^s)}+\gamma^{-1}(\|W^{nc}_{|y=0}\|_{H_\gamma^s(\omega_X)}
  +\|\phi\|_{H_\gamma^{s+1}(\omega_X)})
\]
by the left hand side of \eqref{testimate}.
\end{proof}

\subsubsection{Estimate of ``vorticities"}
 Since the boundary $\{y=0\}$ in the problem \eqref{pb} is
characteristic for the equations, we can not control the
normal derivatives $\pd_y W^\pm_1$ and  $\pd_y W^\pm_2$ directly from the equations. Here we employ an idea inspired by the approach given in \cite{cou2} to study
``vorticities" from the original equations given in \eqref{lpb} of
$(U_+, U_-)$, these vorticities are represented by $\pd_y W^\pm_1$, $\pd_y W^\pm_2$ and
the tangential derivatives $\pd_TW.$

Obviously, the first three equations of $U_+=(u_+, v_+, w_+, p_+)^T$
given in \eqref{lieqr} can be formulated as, 
\bel{vu+}
\begin{cases}
  \rho_r (u_r \pd_x  +w_r\pd_z) u_+ +(\pd_x-\frac{\pd_x\Psi_r}{\pd_y\Psi_r}\pd_y) p_+=\left(f^+-C(U_r, \nabla U_r, \nabla \Psi_r)U_+\right)_1,\\
  \rho_r (u_r \pd_x  +w_r\pd_z) v_+ +\frac{1}{\pd_y\Psi_r}\pd_y p_+=\left(f^+-C(U_r, \nabla U_r, \nabla \Psi_r)U_+\right)_2,\\
  \rho_r (u_r \pd_x  +w_r\pd_z) w_+ +(\pd_z-\frac{\pd_z\Psi_r}{\pd_y\Psi_r}\pd_y) p_+=\left(f^+-C(U_r, \nabla U_r, \nabla \Psi_r)U_+\right)_3,
\end{cases}
\eeq with $(\cdot)_i$ denoting the $i$-th component of a
vector. If we introduce ``vorticities'' defined by
\bel{defxi} \xi_+=(\pd_x -\frac{\pd_x\Psi_r}{\pd_y\Psi_r}\pd_y)v_+
-\frac{1}{\pd_y\Psi_r}\pd_y u_+, \quad
\zeta_+=(\pd_z-\frac{\pd_z\Psi_r}{\pd_y\Psi_r}\pd_y)v_+-\frac{1}{\pd_y\Psi_r}
\pd_y w_+, \eeq then, from \eqref{vu+} we know that $(\xi_+,
\zeta_+)^T$ satisfy the following transport equations: \bel{eqxi}
\begin{cases}
\rho_r (u_r \pd_x +w_r\pd_z) \xi_+
=(\pd_x-\frac{\pd_x\Psi_r}{\pd_y\Psi_r}\pd_y)
\tilde f_2^+ -\frac{1}{\pd_y\Psi_r}\pd_y\tilde f_1^++R_1\cdot\nabla U_+\,,\\
\rho_r (u_r \pd_x +w_r\pd_z) \zeta_+
=(\pd_z-\frac{\pd_z\Psi_r}{\pd_y\Psi_r}\pd_y)
\tilde f_2^+ -\frac{1}{\pd_y\Psi_r}\pd_y\tilde f_3^++R_2\cdot\nabla U_+\,,\\
(\xi_+,\zeta_+)|_{x\le 0}=0,
\end{cases}
\eeq where \bel{deftf} \tilde f_i^+=(f^+-C(U_r, \nabla U_r, \nabla
\Psi_r)U_+)_i, \quad i=1,2,3 \eeq and $R_1, R_2$ are $C^\infty$
vector valued functions depending on $(V_r, \nabla
V_r, \nabla \widetilde \Psi_r, \nabla^2 \widetilde
\Psi_r)$ and vanish at the origin.

For the problem \eqref{eqxi}, we have the estimates of $\xi_+$ and $\zeta_+$ as follows,

\begin{lemma} Let $s>1$, there exist constants $C(K)>0$ and $\gamma_s\ge1$ such
that for all $\gamma\ge \gamma_s$, one has
  \bel{lhexi}
\begin{split}
\sqrt{\gamma}(\|\xi_+\|_{H_\gamma^{s-1}(\Omega_X)}+\|\zeta_+\|_{H_\gamma^{s-1}(\Omega_X)})
&\leq \frac{C(K)}{\sqrt{\gamma}}
\left(\|f^+\|_{H_\gamma^{s}(\Omega_X)}+\|f^+\|_{L^{\infty}(\Omega_X)}\|\nabla
\widetilde
\Psi_r\|_{H_\gamma^{s}(\Omega_X)}\right.\\
&\left. \hspace{-.2in}+\|U_+\|_{H_\gamma^{s}(\Omega_X)}+\|(V_r, \nabla
V_r, \nabla \widetilde
\Psi_r)\|_{H_\gamma^{s}(\Omega_X)}\|U_+\|_{W^{1,\infty}(\Omega_X)}\right).
\end{split}
\eeq
\end{lemma}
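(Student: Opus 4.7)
The plan is to treat \eqref{eqxi} as a scalar transport problem along the characteristic vector field $\rho_r(u_r\partial_x+w_r\partial_z)$ with zero initial data at $x=0$. Because the background is supersonic in the $x$-direction, we have $u_r\ge \bar u_r -\|V_r\|_{L^\infty}>0$ under the smallness assumption \eqref{3.3}, so $x$ is genuinely time-like for this transport and a weighted energy identity with weight $e^{-2\gamma x}$ is available. First, I would carry out the basic $L^2_\gamma$-estimate: multiply the first equation in \eqref{eqxi} by $e^{-2\gamma x}\xi_+$, integrate over $\Omega_X$, use that $\partial_x(e^{-2\gamma x})=-2\gamma e^{-2\gamma x}$, integrate by parts in $z$ (boundary terms vanish by the compact support assumed in \eqref{3.3}), and absorb a zeroth-order term arising from $\partial_x(\rho_r u_r)+\partial_z(\rho_r w_r)$ into the left-hand side for $\gamma$ large, obtaining
\[
\gamma\|\xi_+\|_{L^2_\gamma(\Omega_X)}^2 \le C(K)\,\gamma^{-1}\|G_1\|_{L^2_\gamma(\Omega_X)}^2,
\]
where $G_1$ denotes the right-hand side of \eqref{eqxi}; the same works for $\zeta_+$.

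Next, for the $H^{s-1}_\gamma$ bound I would apply a full derivative $\partial^\alpha$ with $|\alpha|\le s-1$ (tangential \emph{and} normal, not merely tangential, since the principal part contains only $\partial_x,\partial_z$) to \eqref{eqxi}. This yields a transport equation for $\partial^\alpha\xi_+$ with source $\partial^\alpha G_1 + [\rho_r u_r\partial_x+\rho_r w_r\partial_z,\partial^\alpha]\xi_+$, and the previous $L^2_\gamma$ argument gives $\gamma\|\partial^\alpha\xi_+\|_{L^2_\gamma}^2\le C(K)\gamma^{-1}\|\cdot\|_{L^2_\gamma}^2$ of the source. The commutator is a sum of terms of the form $\partial^\beta(\rho_r u_r,\rho_r w_r)\,\partial^{\alpha-\beta+e_k}\xi_+$ with $|\beta|\ge 1$, which by the Moser/Gagliardo--Nirenberg inequality \eqref{ff} is bounded by $C(K)(\|\xi_+\|_{H^{s-1}_\gamma}+\|(V_r,\nabla\widetilde\Psi_r)\|_{H^{s}_\gamma}\|\xi_+\|_{L^\infty})$; the first piece is absorbed by the left-hand side after choosing $\gamma\ge\gamma_s$ large, and $\|\xi_+\|_{L^\infty}$ is controlled by $\|U_+\|_{W^{1,\infty}}\cdot\|\nabla\widetilde\Psi_r\|_{L^\infty}$ from the definition \eqref{defxi}.

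It remains to estimate $\|G_1\|_{H^{s-1}_\gamma}$ where
\[
G_1 = \Bigl(\partial_x-\tfrac{\partial_x\Psi_r}{\partial_y\Psi_r}\partial_y\Bigr)\tilde f_2^+ - \tfrac{1}{\partial_y\Psi_r}\partial_y\tilde f_1^+ + R_1\cdot\nabla U_+.
\]
Using $\tilde f_i^+=(f^+-C(U_r,\nabla U_r,\nabla\Psi_r)U_+)_i$ with $C$ smooth vanishing at the origin, the Moser estimate \eqref{ff}, the lower bound $\partial_y\Psi_r\ge\kappa_0$ (so that $1/\partial_y\Psi_r$ is a smooth function of $\nabla\widetilde\Psi_r$), and the product rule $\|uv\|_{H^s}\lesssim \|u\|_{L^\infty}\|v\|_{H^s}+\|v\|_{L^\infty}\|u\|_{H^s}$, one gets
\[
\|G_1\|_{H^{s-1}_\gamma}\le C(K)\Bigl(\|f^+\|_{H^s_\gamma}+\|f^+\|_{L^\infty}\|\nabla\widetilde\Psi_r\|_{H^s_\gamma}+\|U_+\|_{H^s_\gamma}+\|(V_r,\nabla V_r,\nabla\widetilde\Psi_r)\|_{H^s_\gamma}\|U_+\|_{W^{1,\infty}}\Bigr),
\]
and similarly for $G_2$. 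Combining with the energy inequality and dividing by $\sqrt{\gamma}$ gives \eqref{lhexi}.

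The main technical point is the careful Moser/tame bookkeeping in the last step: $f^+$ appears linearly, so we must separate the $\|f^+\|_{H^s_\gamma}$ contribution from the commutator contribution $\|f^+\|_{L^\infty}\|\nabla\widetilde\Psi_r\|_{H^s_\gamma}$ coming from multiplication by $1/\partial_y\Psi_r$ and $\partial_x\Psi_r/\partial_y\Psi_r$, while for the term $C(U_r,\nabla U_r,\nabla\Psi_r)U_+$ and for $R_1\cdot\nabla U_+$ we must produce the factor $\|U_+\|_{W^{1,\infty}}$ multiplying the Sobolev norm of the coefficient. Handling these two different tame structures simultaneously — a genuinely linear factor for $f^+$ and a product-rule split for the nonlinear coefficients — is the delicate part; once this is done, the transport energy estimate itself is routine.
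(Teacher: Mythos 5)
Your proposal is correct and follows essentially the same route as the paper: differentiate the transport equations \eqref{eqxi} with full derivatives $\partial^\alpha=\partial_x^{\alpha_1}\partial_y^{\alpha_2}\partial_z^{\alpha_3}$, use the weighted $L^2_\gamma$ transport estimate together with Gagliardo--Nirenberg to handle the commutator, bound the source $\hh^1_+$ by Moser-type tame estimates (keeping the linear $\|f^+\|_{H^s_\gamma}$ term separate from the $\|f^+\|_{L^\infty}\|\nabla\widetilde\Psi_r\|_{H^s_\gamma}$ commutator term), and finally use $\|\xi_+\|_{L^\infty}\le C(K)\|U_+\|_{W^{1,\infty}}$. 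The only minor imprecision is that the coefficients $\rho_r u_r,\rho_r w_r$ depend on $V_r$ alone, not on $\nabla\widetilde\Psi_r$, so the commutator contributes $\|V_r\|_{H^s_\gamma}\|\xi_+\|_{L^\infty}$ rather than the slightly looser $\|(V_r,\nabla\widetilde\Psi_r)\|_{H^s_\gamma}\|\xi_+\|_{L^\infty}$ you wrote; this is harmless since it is dominated by the final right-hand side in either case.
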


\begin{proof}
From the problem \eqref{eqxi}, we have the following
$L^2$-estimates immediately, \bel{exi}
\sqrt{\gamma}\|\xi_+\|_{L^2_\gamma(\Omega_X)} \leq
\frac{C(K)}{\sqrt{\gamma}}\|\hh_+^1\|_{L^2_\gamma(\Omega_X)}, \quad
\sqrt{\gamma}\|\zeta_+\|_{L^2_\gamma(\Omega_X)} \leq
\frac{C(K)}{\sqrt{\gamma}}\|\hh_+^2\|_{L^2_\gamma(\Omega_X)}, \eeq
with $\hh_+^1$ and $\hh_+^2$ denoting the corresponding right hand sides of the two
equations given in \eqref{eqxi}.

(1) In order to get the higher order estimates, we take derivatives
$\pd^\alpha = \pd_x^{\alpha_1}\pd_y^{\alpha_2}\pd_z^{\alpha_3}$ with
$|\alpha|=\alpha_1+\alpha_2+\alpha_3=l\leq s-1$ on both sides of the
equation of $\xi_+$ given in \eqref{eqxi} to get
\bel{xil}
\rho_r(u_r\pd_x +w_r\pd_z)\xi^{(l)}_+=\pd^\alpha
\hh_+^1 -[\partial^\alpha, \rho_r(u_r\pd_x +w_r\pd_z)]\xi_+\,,
\eeq where $\xi_+^{(l)}=\pd^\alpha \xi_+$. Using the
Gagliardo-Nirenberg inequality, we know that
\[\begin{split}
&\|\pd^\alpha \hh_+^1 -[\partial^\alpha, \rho_r(u_r\pd_x +w_r\pd_z)]\xi_+\|_{L^2_\gamma(\Omega_X)}\\
&
\hspace{.2in}
\leq C(K)\left(\|\hh_+^1\|_{H_\gamma^l(\Omega_X)}
+\|\xi_+\|_{H_\gamma^l(\Omega_X)} +\|\nabla V_r\|_{H_\gamma^{l}(\Omega_X)}\|\xi_+\|_{L^{\infty}(\Omega_X)}\right).
\end{split}
\]
Applying the same estimates as \eqref{exi} for the equation
\eqref{xil}, and using the above inequality, we obtain \bel{esti1}
\sqrt{\gamma}\|\xi_+\|_{H_\gamma^{s-1}(\Omega_X)} \leq
\frac{C(K)}{\sqrt{\gamma}}\left(\|\hh_+^1\|_{H_\gamma^{s-1}(\Omega_X)}+
\|\xi_+\|_{L^{\infty}(\Omega_X)}\|V_r\|_{H_\gamma^{s}(\Omega_X)}\right) \eeq by absorbing the term
$\frac{1}{\sqrt{\gamma}}\|\xi_+\|_{H_\gamma^{s-1}(\Omega_X)}$ in the
left hand side.

(2) Estimate of the term $\hh_+^1$. From the definition of
$\hh_+^1$, we know that \bel{esih}
\begin{split}
\|\hh_+^1\|_{H_\gamma^{s-1}(\Omega_X)}\leq&
\|(\pd_x-\frac{\pd_x\Psi_r}{\pd_y\Psi_r}\pd_y)\tilde
f^+_2\|_{H_\gamma^{s-1}(\Omega_X)}
+\|\frac{1}{\pd_y\Psi_r}\pd_y\tilde
f_1^+\|_{H_\gamma^{s-1}(\Omega_X)}
+\|R_1\cdot\nabla U_+\|_{H_\gamma^{s-1}(\Omega_X)}\\
\leq &C(K) \left\{ \|\tilde f^+\|_{H_\gamma^{s}(\Omega_X)}+ \|\nabla
\widetilde \Psi_r\|_{H_\gamma^{s}(\Omega_X)}\|\tilde
f^+\|_{L^{\infty}(\Omega_X)}
\right.\\
&\left. +\|U_+\|_{H_\gamma^{s}(\Omega_X)}+\|(V_r, \nabla
\widetilde \Psi_r)\|_{H_\gamma^{s}(\Omega_X)}\|U_+\|_{\ww(\Omega_X)}
\right\}.
\end{split}
\eeq
From \eqref{deftf}, one has \bel{esti3}
\begin{split}
\|\tilde f^+\|_{L^{\infty}(\Omega_X)}&\leq\|f^+\|_{L^{\infty}(\Omega_X)}
+C(K)\|U_+\|_{L^{\infty}(\Omega_X)} ,\\
\|\tilde f^+\|_{H_\gamma^{s}(\Omega_X)} &\leq
C(K)\left(\|f^+\|_{H_\gamma^{s}(\Omega_X)}+\|U_+\|_{H_\gamma^{s}(\Omega_X)}+
\|(V_r, \nabla V_r, \nabla \widetilde
\Psi_r)\|_{H_\gamma^{s}(\Omega_X)}\|U_+\|_{L^\infty(\Omega_X)}\right),
\end{split}
\eeq by noting from \eqref{defc} that $$\|C(U_r, \nabla U_r, \nabla
\Psi_r)\|_{H^s_\gamma(\Omega_X)}\leq C(K)\|(U_r, \nabla U_r, \nabla
\Psi_r)\|_{H_\gamma^s(\Omega_X)}.
$$
Plugging \eqref{esti3} into the
inequality \eqref{esih}, we get the estimate of $\hh_+^1$ as follows,
 \bel{esihh}
\begin{split} \|\hh_+^1\|_{H_\gamma^{s-1}(\Omega_X)} &\leq C(K)
\left\{ \| f^+\|_{H_\gamma^{s}(\Omega_X)}+\|\nabla\widetilde
\Psi_r\|_{H_\gamma^{s}(\Omega_X)}\|f^+\|_{L^{\infty}(\Omega_X)}
\right.\\
&\left.+\|U_+\|_{H_\gamma^{s}(\Omega_X)}+\|(V_r,
\nabla V_r,
\nabla \widetilde
\Psi_r)\|_{H_\gamma^{s}(\Omega_X)}\|U_+\|_{\ww(\Omega_X)} \right\}.
\end{split}
\eeq

(3) Obviously, we have  \bel{esti2}
\|\xi_+\|_{L^{\infty}(\Omega_X)}\leq C(K)\|U_+\|_{W^{1,\infty}(\Omega_X)}. \eeq
Combining \eqref{esti1}, \eqref{esihh} and \eqref{esti2}, we
obtain the following estimate of $\xi_+$ in the end, \bel{hexi}
\begin{split}
\sqrt{\gamma}\|\xi_+\|_{H_\gamma^{s-1}(\Omega_X)} \leq&
\frac{C(K)}{\sqrt{\gamma}}
\left(\|f^+\|_{H_\gamma^{s}(\Omega_X)}+\|f^+\|_{L^{\infty}(\Omega_X)}\|\nabla
\widetilde
\Psi_r\|_{H_\gamma^{s}(\Omega_X)}\right.\\
&\left. +\|U_+\|_{H_\gamma^{s}(\Omega_X)}+\|(V_r, \nabla
V_r, \nabla \widetilde
\Psi_r)\|_{H_\gamma^{s}(\Omega_X)}\|U_+\|_{W^{1,\infty}(\Omega_X)}\right).
\end{split}
\eeq
One can study $\zeta_+$ similarly from the problem \eqref{eqxi}, and deduce the same estimate as \eqref{hexi} for  $\zeta_+$. This finishes the proof of this lemma. \end{proof}

 Similar to
\eqref{defxi}, for $U_-=(u_-, v_-, w_-, p_-)^T$, we define the
vorticities \bel{defxi-} \xi_-=(\pd_x
-\frac{\pd_x\Psi_l}{\pd_y\Psi_l}\pd_y)v_-
-\frac{1}{\pd_y\Psi_l}\pd_y u_-, \quad
\zeta_-=(\pd_z-\frac{\pd_z\Psi_l}{\pd_y\Psi_l}\pd_y)v_--\frac{1}{\pd_y\Psi_l}
\pd_y w_-. \eeq From the equations given in \eqref{lieqr}, we deduce
that $(\xi_-, \zeta_-)$ also satisfy two transport equations similar
to \eqref{eqxi}, and conclude

\begin{lemma} Let $s>1$, there exist constants $C(K)>0$ and $\gamma_s\ge1$ such
that for all $\gamma\ge \gamma_s$, the following estimate holds,
  \bel{hexi-}
\begin{split}
\sqrt{\gamma}(\|\xi_-\|_{H_\gamma^{s-1}(\Omega_X)}+\|\zeta_-\|_{H_\gamma^{s-1}(\Omega_X)}
)&\leq \frac{C(K)}{\sqrt{\gamma}}
\left(\|f^-\|_{H_\gamma^{s}(\Omega_X)}+\|f^-\|_{L^{\infty}(\Omega_X)}
\|\nabla \widetilde\Psi_l\|_{H_\gamma^{s}(\Omega_X)}\right.\\
&\left. \hspace{-.2in}+\|U_-\|_{H_\gamma^{s}(\Omega_X)}+\|(V_l, \nabla
V_l, \nabla \widetilde
\Psi_l)\|_{H_\gamma^{s}(\Omega_X)}\|U_-\|_{W^{1,\infty}(\Omega_X)}\right).
\end{split}
\eeq
\end{lemma}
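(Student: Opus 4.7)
The plan is to mirror, word for word, the proof of the previous lemma (the one establishing \eqref{hexi} for $\xi_+,\zeta_+$), since the system \eqref{lieqr} for $U_-$ has exactly the same structural form as that for $U_+$, with $(U_r,\Psi_r,V_r,\widetilde\Psi_r)$ replaced throughout by $(U_l,\Psi_l,V_l,\widetilde\Psi_l)$. The only formal difference is the sign convention $\widetilde\Psi_l=\Psi_l+y$ versus $\widetilde\Psi_r=\Psi_r-y$ recorded in \eqref{vbstate2}, together with $\partial_y\Psi_l\le-\kappa_0<0$; since the estimate involves only $\nabla\widetilde\Psi_l$ and the factors $1/\partial_y\Psi_l$ are absorbed into the constants $C(K)$ (which depend only on the uniform bound in \eqref{3.3} and the non-degeneracy $|\partial_y\Psi_l|\ge\kappa_0$), these sign issues are cosmetic.

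First I would write down the first three scalar equations for $U_-=(u_-,v_-,w_-,p_-)^T$ extracted from \eqref{lieqr}; they are the exact analogues of \eqref{vu+} with subscripts $r$ replaced by $l$. Forming the combinations \eqref{defxi-} eliminates the pressure derivatives, yielding a transport system
\begin{equation*}
\begin{cases}
\rho_l(u_l\pd_x+w_l\pd_z)\xi_-=(\pd_x-\tfrac{\pd_x\Psi_l}{\pd_y\Psi_l}\pd_y)\tilde f_2^- -\tfrac{1}{\pd_y\Psi_l}\pd_y\tilde f_1^- +R_1'\cdot\nabla U_-,\\[2pt]
\rho_l(u_l\pd_x+w_l\pd_z)\zeta_-=(\pd_z-\tfrac{\pd_z\Psi_l}{\pd_y\Psi_l}\pd_y)\tilde f_2^- -\tfrac{1}{\pd_y\Psi_l}\pd_y\tilde f_3^- +R_2'\cdot\nabla U_-,\\[2pt]
(\xi_-,\zeta_-)|_{x\le 0}=0,
\end{cases}
\end{equation*}
where $\tilde f_i^-=(f^- -C(U_l,\nabla U_l,\nabla\Psi_l)U_-)_i$ and $R_1',R_2'$ are smooth vector-valued functions of $(V_l,\nabla V_l,\nabla\widetilde\Psi_l,\nabla^2\widetilde\Psi_l)$ vanishing at the origin. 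Denote the right-hand sides by $\hh_-^1,\hh_-^2$.

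Next I would repeat the three-step argument of the previous lemma. For step (1), the basic $L^2$ estimate for the scalar transport operator $\rho_l(u_l\pd_x+w_l\pd_z)$ (with $x$ time-like and data vanishing for $x\le 0$) gives the analogue of \eqref{exi}; differentiating by $\pd^\alpha$ with $|\alpha|\le s-1$, commuting with the transport operator, and controlling the commutator by Gagliardo–Nirenberg as in \eqref{xil} yields
\begin{equation*}
\sqrt\gamma\,\|\xi_-\|_{H_\gamma^{s-1}(\Omega_X)}\le \frac{C(K)}{\sqrt\gamma}\Bigl(\|\hh_-^1\|_{H_\gamma^{s-1}(\Omega_X)}+\|\xi_-\|_{L^\infty(\Omega_X)}\|V_l\|_{H_\gamma^s(\Omega_X)}\Bigr),
\end{equation*}
after absorbing the low-order $\gamma^{-1/2}\|\xi_-\|_{H_\gamma^{s-1}}$ term for $\gamma$ large. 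For step (2), I would estimate $\hh_-^1$ exactly as in \eqref{esih}–\eqref{esihh}: the composition bound \eqref{ff} applied to $C(U_l,\nabla U_l,\nabla\Psi_l)$ together with $\|C(U_l,\nabla U_l,\nabla\Psi_l)\|_{H_\gamma^s}\le C(K)\|(U_l,\nabla U_l,\nabla\Psi_l)\|_{H_\gamma^s}$ gives the analogue of \eqref{esti3} for $\tilde f^-$, and plugging into $\hh_-^1$ produces the full bound. Step (3) is the trivial $\|\xi_-\|_{L^\infty}\le C(K)\|U_-\|_{W^{1,\infty}}$, analogous to \eqref{esti2}.

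Combining these three pieces gives \eqref{hexi-} for $\xi_-$, and the identical argument applied to $\zeta_-$ (using $\hh_-^2$ in place of $\hh_-^1$) yields the same estimate for $\zeta_-$, completing the proof. There is essentially no new obstacle: the only thing worth verifying carefully is that the smoothness and vanishing properties of $R_1',R_2'$ at the origin truly mirror those of $R_1,R_2$ — this follows from the fact that the expressions differ from the $r$-case only through the substitution of background quantities, and the same chain-rule computation that produced $R_1,R_2$ produces $R_1',R_2'$ with the identical structural dependence.
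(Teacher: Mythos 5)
Your proposal is correct and matches the paper's own treatment: the paper simply defines $\xi_-,\zeta_-$ by the $l$-analogue of \eqref{defxi}, observes that they satisfy transport equations mirroring \eqref{eqxi}, and asserts the estimate by the same three-step argument used for $\xi_+,\zeta_+$. Your careful remark that the sign convention $\widetilde\Psi_l=\Psi_l+y$ and the lower bound $|\partial_y\Psi_l|\ge\kappa_0$ are absorbed into $C(K)$ is exactly the point that makes the mirroring legitimate.
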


The estimates \eqref{lhexi} and \eqref{hexi-} will be used to
study the normal derivatives of $W^c=(W_1^\pm, W_2^\pm)^T$ in next subsection.

\subsubsection{Estimate of normal derivatives}
After studying the ``vorticities" $(\xi_\pm, \zeta_\pm)^T$ in the previous
subsection, we try to represent $\pd_yW^c=(\pd_y W_1^+,  \pd_y
W_2^+, \pd_y W_1^-, \pd_y W_2^-)^T$ in terms of $(\xi_{\pm},
\zeta_\pm)^T$ and $\pd_T W$.

From the transformations defined in \eqref{utow}, we get \bel{uvw}
\begin{cases}
\pd_y u_+=\pd_y W^+_1 -\pd_x\Psi_r(\pd_y W^+_3 +\pd_y W_4^+) +(\pd_yT^{-1}_rW^+)_1,\\
\pd_y v_+=\pd_x\Psi_r\pd_y W^+_1 +\pd_z\Psi_r\pd_yW^+_2+\pd_y W^+_3
+\pd_y W_4^+
 +(\pd_yT^{-1}_rW^+)_2,\\
\pd_y w_+=\pd_y W^+_2 -\pd_z\Psi_r(\pd_y W^+_3 +\pd_y W_4^+) +(\pd_yT^{-1}_rW^+)_3,\\
\end{cases}
\eeq which implies that
\[
\begin{split}
&(\pd_x v_+-\xi_+)\pd_y\Psi_r=(1+(\pd_x\Psi_r)^2)\pd_y W^+_1 +\pd_x\Psi_r\pd_z\Psi_r\pd_y W_2^+ +(\pd_y T_r^{-1}W^+)_1 +\pd_x\Psi_r(\pd_yT_r^{-1}W^+)_2\\
&(\pd_z v_+-\zeta_+)\pd_y\Psi_r=\pd_x\Psi_r\pd_z\Psi_r\pd_y W^+_1
+(1+(\pd_z\Psi_r)^2)\pd_y W_2^+ +(\pd_y T_r^{-1}W^+)_3
+\pd_z\Psi_r(\pd_yT_r^{-1}W^+)_2,
\end{split}
\]
with $(\pd_y T_r^{-1}W^+)_i ~(i=1,2,3)$ representing the $i$-th
component of the vector $\pd_y T_r^{-1}W^+$, and $T_{r,l}=T(\nabla \Psi_{r,l})$. Thus, we obtain
\bel{yc1}
\begin{split}
\pd_y W_1^+=&\frac{1}{\lc\pd\Psi_r\rc^2}\left\{\pd_y\Psi_r[(1+(\pd_z\Psi_r)^2)(\pd_x v_+-\xi_+)-\pd_x\Psi_r\pd_z\Psi_r(\pd_z v_+-\zeta_+)]\right.\\
&\left.-(1+(\pd_z\Psi_r)^2)(\pd_y T_r^{-1}W^+)_1-\pd_x\Psi_r(\pd_y
T_r^{-1}W^+)_2+\pd_x\Psi_r\pd_z\Psi_r(\pd_y T_r^{-1}W^+)_3\right\},
\end{split}
\eeq and \bel{yc2}
\begin{split}
\pd_y W_2^+=&\frac{1}{\lc\pd\Psi_r\rc^2}\left\{\pd_y\Psi_r[(1+(\pd_x\Psi_r)^2)(\pd_z v_+-\zeta_+)-\pd_x\Psi_r\pd_z\Psi_r(\pd_x v_+-\xi_+)]\right.\\
&\left.+\pd_x\Psi_r\pd_z\Psi_r(\pd_y
T_r^{-1}W^+)_1-\pd_z\Psi_r(\pd_y
T_r^{-1}W^+)_2-(1+(\pd_x\Psi_r)^2)(\pd_y T_r^{-1}W^+)_3\right\}.
\end{split}
\eeq

From \eqref{yc1}, \eqref{yc2} and the equations of $W^+$
given in \eqref{pb}, we can represent $\pd_y W^+$ by  $\xi_+, \zeta_+$ and $\pd_T W^+$ as follows \bel{eqn} \begin{split} \pd_y
W^+=&\Lambda_2F^+ +\tilde A_1^r \pd_xW^+ +\tilde A_3^r\pd_z W^+
+\tilde A_0^r W^+\\
&+\frac{\pd_y \Psi_r}{\lc \pd\Psi_r\rc^2}\begin{pmatrix}
  -(1+(\pd_z\Psi_r)^2) & \pd_x\Psi_r\pd_z\Psi_r\\
  \pd_x\Psi_r\pd_z\Psi_r & -(1+(\pd_x\Psi_r)^2)\\
  0 & 0\\
  0 &0
\end{pmatrix}
\begin{pmatrix}
  \xi_+\\ \zeta_+
\end{pmatrix},\end{split}
\eeq where $\tilde A_0^r,\tilde A_{1}^r$ and $\tilde A_3^r$ are
modifications of $A_0^rC^r, A_{1}^r$ and $A_3^r$ given in
\eqref{vdefac} after adding equations \eqref{yc1} and \eqref{yc2}.
Similarly, from the formulae of $\xi_-, \zeta_-$ and the equations of $W^-$ given in \eqref{pb}, one can derive a representation of $\pd_yW^-$ similar to that given in \eqref{eqn}.

By using the equation \eqref{eqn} of $\pd_y W^\pm$, we get the
following result of the normal derivative $\pd_yW$:
\begin{lemma}\label{len}
For $s\ge 1$, there exists a constant $C(K)>0$ such that for any
integer $k ~(1\le k\le s)$, the following estimate holds, \bel{nestimate}
\begin{split}
\|\pd_y^kW\|_{L_y^2(H_\gamma^{s-k})} \leq &
C(K)\left\{\|F\|_{H_\gamma^{s-1}(\Omega_X)}+\|\xi\|_{H_\gamma^{s-1}(\Omega_X)}
+\|\zeta\|_{H_\gamma^{s-1}(\Omega_X)}+\|W\|_{L_y^2(H_\gamma^{s})}
+\|W\|_{H_\gamma^{s-1}(\Omega_X)}
\right.\\
&\left.+(\|\xi\|_{L^\infty(\Omega_X)}+\|\zeta\|_{L^\infty(\Omega_X)})
\|\nabla\widetilde\Psi\|_{H_\gamma^{s-1}(\Omega_X)}
+\|W\|_{L^\infty(\Omega_X)}\|(V, \nabla \widetilde \Psi)
\|_{H_\gamma^{s}(\Omega_X)}\right\}\\
\end{split}
\eeq
where $\xi=(\xi_+, \xi_-)^T, \zeta=(\zeta_+, \zeta_-)^T$ are defined in \eqref{defxi} and \eqref{defxi-}
respectively.
\end{lemma}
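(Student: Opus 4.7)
The plan is a finite induction on the order $k$ of normal derivatives, for $k=1,\ldots,s$, exploiting the representation \eqref{eqn} (and its counterpart for $W^-$), which expresses $\pd_y W^{\pm}$ algebraically in terms of tangential derivatives of $W^\pm$, the source $F^\pm$, the vorticities $(\xi_\pm,\zeta_\pm)$, and zero-th order contributions, with coefficients $\tilde A_0^{r,l},\tilde A_1^{r,l},\tilde A_3^{r,l}$ that are smooth functions of $(U_{r,l},\nabla\Psi_{r,l})$. The key point is that no $\pd_y W$ appears on the right-hand side of \eqref{eqn}, so every application of \eqref{eqn} trades one normal derivative on $W$ for tangential derivatives plus harmless lower-order data.

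For the base case $k=1$, I would apply $\pd_T^\alpha$ with $|\alpha|\le s-1$ to \eqref{eqn}, take $L^2_\gamma(\Omega_X)$-norms, and control each commutator $\pd_T^\beta(\tilde A_i^{r,l})\pd_T^{\alpha-\beta}\pd_{x,z}W^{\pm}$ via the Moser-type product and composition inequalities, including \eqref{ff}. The composition lemma yields $\|\tilde A_i^{r,l}\|_{H^s_\gamma}\le C(K)\|(V,\nabla\widetilde\Psi)\|_{H^s_\gamma}$, whereas the $L^\infty$-bound \eqref{3.3} keeps the low-frequency multiplicative factors under control. The identity $\|\pd_y W\|_{L^2_y(H^{s-1}_\gamma)}^2\simeq \sum_{|\alpha|\le s-1}\|\pd_T^\alpha\pd_y W\|_{L^2_\gamma(\Omega_X)}^2$ converts the tangential sum into the target norm, and the vorticity block in \eqref{eqn} contributes $\|\xi\|_{H^{s-1}_\gamma(\Omega_X)}+\|\zeta\|_{H^{s-1}_\gamma(\Omega_X)}$ (with $\nabla\widetilde\Psi$ appearing as a multiplier, producing the $\|\xi\|_{L^\infty}\|\nabla\widetilde\Psi\|_{H^{s-1}_\gamma}$ term).

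For the inductive step $k-1\Rightarrow k$, I would apply $\pd_y^{k-1}$ to \eqref{eqn}, which commutes exactly with the tangential derivatives, and distribute by the Leibniz rule. Every resulting summand is of one of the schematic forms $(\pd_y^j\tilde A_i^{r,l})(\pd_y^{k-1-j}\pd_T W)$, $(\pd_y^j\tilde A_0^{r,l})(\pd_y^{k-1-j}W)$, $\pd_y^{k-1}(\Lambda_2F^\pm)$, or $\pd_y^{k-1}$ applied to the vorticity block. Crucially, in the first two families the $W$-factor carries at most $k-1$ normal derivatives, so the inductive hypothesis applies after standard Moser estimates in $L^2_y(H^{s-k}_\gamma)$; the source and vorticity contributions are absorbed by $\|F\|_{H^{s-1}_\gamma(\Omega_X)}$ and $\|(\xi,\zeta)\|_{H^{s-1}_\gamma(\Omega_X)}$ once the multiplying coefficients are estimated by $C(K)\|(V,\nabla\widetilde\Psi)\|_{H^s_\gamma}$ or $\|\cdot\|_{L^\infty}$ via \eqref{3.3}. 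Summing the resulting bounds over $k=1,\ldots,s$ yields \eqref{nestimate}.

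The main obstacle is the bookkeeping of these Moser-type product estimates, to ensure that on the right-hand side of \eqref{nestimate} the high-order norms $\|(V,\nabla\widetilde\Psi)\|_{H^s_\gamma}$ are always paired with the $L^\infty$-factors of $W$, $\xi$, $\zeta$, rather than with intermediate mixed-norm terms. This forces a systematic use of the pointwise bounds (\eqref{3.3} for $W$ and the estimate \eqref{esti2} for the vorticities), while any intermediate $H^\sigma_\gamma$-quantity with $\sigma<s$ on $W$ must be absorbed either by the inductive hypothesis or by the $\|W\|_{H^{s-1}_\gamma(\Omega_X)}$-term already present on the right of \eqref{nestimate}.
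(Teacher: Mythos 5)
Your proposal is correct and follows essentially the same route as the paper: a finite induction on $k$, using the representation \eqref{eqn} (which contains no normal derivative of $W$ on its right-hand side) to reduce $\pd_y^k W$ to tangential derivatives, the source $F$, and the vorticities, with commutators controlled by Gagliardo--Nirenberg/Moser-type product estimates and the $W^{2,\infty}$ bound \eqref{3.3}. The paper's proof of the inductive step is exactly as you describe: apply $\pd_y^{k-1}$ to \eqref{eqn}, split $\pd_T^\alpha\pd_y^{k-1}(\tilde A_i^{r}\pd_{x,z}W^+)$ into a principal term bounded by $\|\pd_y^{k-1}W^+\|_{L^2_y(H^{l+1}_\gamma)}$ plus a commutator, and invoke the inductive hypothesis on $\pd_y^{k-1}W^+$.
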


\begin{proof}
We shall only study the estimate of $W^+$ by
induction on $k$, and the estimate of $W^-$ can be derived similarly.

(1) When $k=1$, we study the estimate of $\|\pd_y
W^+\|_{L^2(H_\gamma^{s-1})}$ through the equations \eqref{eqn}.
As in the proof of Lemma \ref{let}, using Gagliardo-Nirenberg's
inequality we obtain that
\[\begin{split}
&\|\tilde A_1^r \pd_x W^++\tilde A_3^r
\pd_z W^+\|_{L_y^2(H_\gamma^{s-1})}\leq C(K)
\left(\|W^+\|_{L_y^2(H_\gamma^s)}+\|W^+\|_{L^\infty(\Omega_X)}
\|(V_r, \nabla \widetilde \Psi_r)\|_{L_y^2(H_\gamma^{s})}\right),\\
&\|\tilde A_0^r W^+\|_{L_y^2(H_\gamma^{s-1})}\leq C(K)
\left(\|W^+\|_{L_y^2(H_\gamma^{s-1})}+\|W^+\|_{L^\infty(\Omega_X)}\|(V_r, \nabla V_r, \nabla \widetilde \Psi_r, \nabla^2 \widetilde \Psi_r)\|_{L_y^2(H_\gamma^{s-1})}\right)
\end{split}
\]
by noting that $\tilde A_0^r$ is a $C^\infty$ function of
$(V_r, \nabla V_r, \nabla \widetilde\Psi_r,
\nabla^2 \widetilde \Psi_r)$ and vanishes at the origin.
Moreover, the estimate for the terms of $(\xi_+,
\zeta_+)$ appeared in \eqref{eqn} is in the following,
\[
\begin{array}{l}
\|\frac{\pd_y
\Psi_r}{\lc
\pd\Psi_r\rc^2}(\pd_x\Psi_r\pd_z\Psi_r\zeta_+-(1+(\pd_z\Psi_r)^2)\xi_+)\|
_{L_y^2(H_\gamma^{s-1})}\\[2mm] +\|\frac{\pd_y \Psi_r}{\lc
\pd\Psi_r\rc^2}(\pd_x\Psi_r\pd_z\Psi_r\xi_+-(1+(\pd_x\Psi_r)^2)\zeta_+)\|
_{L_y^2(H_\gamma^{s-1})}
\\[2mm]
\hspace{.5in}
\le
C(K)\left(\|\xi_+, \zeta_+\|_{L_y^2(H_\gamma^{s-1})}+\|\xi_+, \zeta_+\|_{L^\infty(\Omega_X)}\|\nabla
\widetilde\Psi_r\|_{L_y^2(H_\gamma^{s-1})}\right).
\end{array}
\]


Thus, from the equations of $\pd_yW^+$ given in
\eqref{eqn}, we get
\[
\begin{split}
  \|\pd_yW^+\|_{L_y^2(H_\gamma^{s-1})}
  \leq
& C_1(K)\left\{\|F^+\|_{H_\gamma^{s-1}(\Omega_X)}+\|\xi_+\|_{H_\gamma^{s-1}
  (\Omega_X)}
  +\|\zeta_+\|_{H_\gamma^{s-1}(\Omega_X)}+\|W^+\|_{L_y^2(H_\gamma^{s})}
\right.\\
  &\left.+(\|\xi_+\|_{L^\infty(\Omega_X)}+\|\zeta_+\|_{L^\infty(\Omega_X)})\|\nabla\widetilde \Psi_r\|_{H_\gamma^{s-1}(\Omega_X)}+\|W^+\|_{L^\infty(\Omega_X)}\|(V_r, \nabla \widetilde \Psi_r)
  \|_{H_\gamma^{s}(\Omega_X)}\right\},
\\
\end{split}
\]
which implies the estimate \eqref{nestimate} for the case $k=1.$

(2) Assuming that the estimate \eqref{nestimate} holds for $k-1$, we try to prove that it also holds for $k\leq s$. By taking derivatives with respect to $y$  on the equation \eqref{eqn}, we get
\bel{eqk} \pd_y^k W^+=\Lambda_2\pd_y^{k-1}F^+ +\pd_y^{k-1}\left[\tilde
A_1^r \pd_xW^+ +\tilde A_3^r\pd_z W^+ +\tilde A_0^r W^++S
\begin{pmatrix}
  \xi_+\\ \zeta_+
\end{pmatrix}\right],
\eeq with $S$ denoting the zero-th order coefficient matrix of
$(\xi_+, \zeta_+)^T$ given in \eqref{eqn}. Obviously, we have
\bel{3-57-1}
\|\Lambda_2 \pd_y^{k-1}F^+\|_{L_y^2(H_\gamma^{s-k})}\leq
\|F^+\|_{H_\gamma^{s-1}(\Omega_X)}.
\eeq

To estimate $\|\pd_y^{k-1}(\tilde
A_1^r\pd_xW^+)\|_{L^2(H_\gamma^{s-k})}$, we define
$\pd_T^\alpha=\pd_x^{\alpha_1}\pd_z^{\alpha_2}$ with
$\alpha_1+\alpha_2=l\leq s-k$,
and get
\bel{pdyk}
\begin{split}
\pd_T^\alpha \pd_y^{k-1}(\tilde A_1^r \pd_xW^+)=&\tilde A_1^r
\pd_T^\alpha \pd_y^{k-1}\pd_xW^+ +[\pd_T^\alpha \pd_y^{k-1}, \tilde A_1^r ]\pd_xW^+
\end{split} \eeq
and
\[\begin{split}
\|\tilde A_1^r \pd_T^\alpha
\pd_y^{k-1}\pd_xW^+\|_{L^2_\gamma(\Omega_X)}&\leq C(K)
\|\pd_y^{k-1}W^+\|_{L_y^2(H_\gamma^{l+1})},\\
\|[\pd_T^\alpha \pd_y^{k-1}, \tilde A_1^r ]\pd_xW^+\|_{L^2_\gamma(\Omega_X)}&\leq
C(K)\left(\|W^+\|_{H_\gamma^{l+k-1}(\Omega_X)}+\|W^+\|_{L^\infty(\Omega_X)}
\|(V_r, \nabla \widetilde \Psi_r)\|_{H_\gamma^{l+k}(\Omega_X)}\right).
\end{split}
\]
Substituting the above estimates into \eqref{pdyk} and taking
weighted summation from $l=0$ to $l=s-k$, we obtain that
\bel{pdy1}\begin{split}
&\|\pd_y^{k-1}(\tilde A_1^r \pd_xW^+)\|_{L_y^2(H_\gamma^{s-k})}\\
\leq &C(K)\left(\|\pd_y^{k-1}W^+\|_{L_y^2(H_\gamma^{s-(k-1)})}
+\|W^+\|_{H_\gamma^{s-1}(\Omega_X)}+\|W^+\|_{L^\infty(\Omega_X)}\|(V_r, \nabla \widetilde \Psi_r)\|_{H_\gamma^{s}(\Omega_X)}\right).
\end{split}
\eeq

In the same way, we deduce that $\|\pd_y^{k-1}(\tilde A_3^r
\pd_zW^+)\|_{L_y^2(H_\gamma^{s-k})}$ is also bounded by the
right hand side of \eqref{pdy1}. Similarly, we can get
\bel{3-61}
\|\pd_y^{k-1}(\tilde A_0^r W^+)\|_{L_y^2(H_\gamma^{s-k})}\leq C(K)\left(
\|W^+\|_{H_\gamma^{s-1}(\Omega_X)}+\|W^+\|_{L^\infty(\Omega_X)}\|(V_r, \nabla \widetilde \Psi_r)\|_{H_\gamma^{s}(\Omega_X)}\right),\eeq
and
\bel{3-62}
\begin{split}
\|\pd_y^{k-1}(S\begin{pmatrix}
  \xi_+\\ \zeta_+
\end{pmatrix})\|_{L_y^2(H_\gamma^{s-k})}&\leq C(K)\left((\|\xi_+\|_{L^\infty(\Omega_X)}+\|\zeta_+\|_{L^\infty(\Omega_X)})\|\nabla\widetilde\Psi_r\|_{H_\gamma^{s-1}(\Omega_X)}
\right.\\
&\qquad\qquad\left.+\|\xi_+\|_{H_\gamma^{s-1}(\Omega_X)}+\|\zeta_+\|_{H_\gamma^{s-1}(\Omega_X)}
\right).
\end{split}
\eeq
Combining the inequalities \eqref{3-57-1}, \eqref{pdy1}, \eqref{3-61}, \eqref{3-62} with
\eqref{eqk}, and using the induction assumption of $\partial_y^{k-1}W^+$, we obtain the estimate \eqref{nestimate} for all $k\leq
s$ and conclude this lemma. \end{proof}

By plugging the estimates of $\xi, \zeta$ given in \eqref{lhexi} and
\eqref{hexi-} into the inequality \eqref{nestimate} and taking summation
from $k=1$ to $k=s$, one deduces

\begin{lemma}
Let $s\ge 1$, there exist constants $C(K)$ and $\gamma_s\geq 1$
such that for all $\gamma \ge \gamma_s$, the following inequality
holds,\bel{norestimate}
\begin{split}
\sqrt{\gamma}\sum\limits_{k=1}\limits^{s}\|\pd_y^kW\|_{L_y^2(H_\gamma^{s-k})}
\leq
&
C(K)\left\{\sqrt{\gamma}\left(\|F\|_{H_\gamma^{s-1}(\Omega_X)}+\|W\|_{L_y^2(H_\gamma^{s})}
+\|W\|_{H_\gamma^{s-1}(\Omega_X)}\right.\right.\\
&\left.+\|W\|_{L^\infty(\Omega_X)}  \|(V, \nabla
\widetilde \Psi)\| _{H_\gamma^{s}(\Omega_X)}\right)
+\frac{1}{\sqrt{\gamma}}\left(\|f\|_{H_\gamma^{s}(\Omega_X)}
+\|U\|_{H_\gamma^{s}(\Omega_X)}\right.\\
&+\|f\|_{L^\infty(\Omega_X)}\|\nabla
\widetilde\Psi\|_{H_\gamma^{s}(\Omega_X)}+\|f\|_{\ww(\Omega_X)}\|\nabla
\widetilde\Psi\|_{H_\gamma^{s-1}(\Omega_X)}
\\
&\left.\left.+\|(V, \nabla
V, \nabla
\widetilde \Psi)\|_{H_\gamma^{s}(\Omega_X)}\|W\|_{\ww(\Omega_X)}\right)\right\}.\end{split}
\eeq
\end{lemma}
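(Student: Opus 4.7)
The statement is a direct consequence of Lemma \ref{len} combined with the vorticity estimates \eqref{lhexi} and \eqref{hexi-}. The plan is essentially arithmetic: plug the vorticity bounds into \eqref{nestimate}, sum over $k$, then translate the $W$-norms into $U$-norms (and similarly for $F$ vs $f$) using the transformations \eqref{utow} and \eqref{vdefac}.

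First I would sum the inequality \eqref{nestimate} over $k=1,\dots,s$. The quantities on the right-hand side which are independent of $k$ pick up at most a factor $s$, which can be absorbed into $C(K)$. After multiplying through by $\sqrt\gamma$, the resulting estimate bounds $\sqrt{\gamma}\sum_{k=1}^{s}\|\pd_y^k W\|_{L_y^2(H_\gamma^{s-k})}$ by
\[
C(K)\sqrt\gamma\bigl(\|F\|_{H_\gamma^{s-1}(\Omega_X)}+\|\xi\|_{H_\gamma^{s-1}(\Omega_X)}+\|\zeta\|_{H_\gamma^{s-1}(\Omega_X)}\bigr)
\]
plus the $W$-dependent tail terms, in which $\xi$ and $\zeta$ enter only through their $L^\infty$ norms multiplied by $\|\nabla\widetilde\Psi\|_{H^{s-1}_\gamma}$.

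Second, I would insert the vorticity bounds \eqref{lhexi} and \eqref{hexi-}, applied with exponent $s$, into the terms $\sqrt\gamma\|\xi\|_{H_\gamma^{s-1}}$ and $\sqrt\gamma\|\zeta\|_{H_\gamma^{s-1}}$. Each such substitution gains the factor $1/\sqrt\gamma$ stated in \eqref{lhexi}, contributing exactly the combinations
\[
\|f\|_{H_\gamma^{s}(\Omega_X)}+\|f\|_{L^\infty(\Omega_X)}\|\nabla\widetilde\Psi\|_{H_\gamma^{s}(\Omega_X)}+\|U\|_{H_\gamma^{s}(\Omega_X)}+\|(V,\nabla V,\nabla\widetilde\Psi)\|_{H_\gamma^{s}(\Omega_X)}\|U\|_{W^{1,\infty}(\Omega_X)}
\]
that appear on the right of \eqref{norestimate}. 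The $L^\infty$ occurrences of $\xi,\zeta$ in the tail of \eqref{nestimate} are controlled by \eqref{esti2}, namely $\|\xi\|_{L^\infty}+\|\zeta\|_{L^\infty}\le C(K)\|U\|_{W^{1,\infty}}$, and therefore fold into the same $\|(V,\nabla V,\nabla\widetilde\Psi)\|_{H_\gamma^{s}}\|W\|_{W^{1,\infty}}$-type term once one uses $\|U\|_{W^{1,\infty}}\le C(K)\|W\|_{W^{1,\infty}}$.

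Third, I would eliminate the discrepancy between the $W$-formulation and the $U$-formulation. Since $W^\pm=T(\nabla\Psi_{r,l})U_\pm$ with $T$ a smooth function of $\nabla\widetilde\Psi$, the Moser estimate \eqref{ff} gives $\|W\|_{H^\sigma_\gamma}\le C(K)(\|U\|_{H^\sigma_\gamma}+\|U\|_{L^\infty}\|\nabla\widetilde\Psi\|_{H^\sigma_\gamma})$ and likewise $\|F\|_{H^{s-1}_\gamma}\le C(K)(\|f\|_{H^{s-1}_\gamma}+\|f\|_{L^\infty}\|\nabla\widetilde\Psi\|_{H^{s-1}_\gamma})$; both conversions produce only terms already present on the right of \eqref{norestimate}. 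Choosing $\gamma_s$ sufficiently large lets one absorb the residual $\frac{1}{\sqrt\gamma}\|W\|_{L_y^2(H^s_\gamma)}$ piece produced by the substitution (which is controlled independently by the tangential estimate \eqref{testimate}) into the $\|W\|_{L^2_y(H^s_\gamma)}$ term already sitting on the right-hand side.

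The expected main obstacle is purely bookkeeping: one must check that the combinations $\sqrt\gamma\cdot\frac{1}{\sqrt\gamma}$ produced by step two yield exactly the weights displayed in \eqref{norestimate}, and that the various $L^\infty\times H^s$ cross terms arising from Moser's inequality fit into the last two bracketed lines of the claimed bound. No genuinely new analytic input is needed beyond Lemmas 3.4–3.6 and inequality \eqref{ff}.
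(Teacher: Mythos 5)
Your overall plan — sum \eqref{nestimate} over $k$, feed in the vorticity bounds \eqref{lhexi}/\eqref{hexi-}, and convert between $U$ and $W$ — is exactly the one-line route the paper sketches. But there is a concrete gap in the way you dispose of the $L^\infty$-occurrences of $\xi,\zeta$.

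After multiplying \eqref{nestimate} by $\sqrt\gamma$ one is left with the term
$\sqrt\gamma\,(\|\xi\|_{L^\infty}+\|\zeta\|_{L^\infty})\,\|\nabla\widetilde\Psi\|_{H^{s-1}_\gamma(\Omega_X)}$.
You bound this by \eqref{esti2}, i.e.\ $\|\xi\|_{L^\infty}+\|\zeta\|_{L^\infty}\le C(K)\|U\|_{W^{1,\infty}}\le C(K)\|W\|_{W^{1,\infty}}$, producing
$\sqrt\gamma\,\|W\|_{W^{1,\infty}}\|\nabla\widetilde\Psi\|_{H^{s-1}_\gamma}$.
Every term on the right of \eqref{norestimate} that could absorb this — in particular $\frac{1}{\sqrt\gamma}\|(V,\nabla V,\nabla\widetilde\Psi)\|_{H^s_\gamma}\|W\|_{W^{1,\infty}}$ — carries a $\gamma^{-1/2}$ weight, so you need to trade a full factor of $\gamma$ between $\|\nabla\widetilde\Psi\|_{H^{s-1}_\gamma}$ and $\|\nabla\widetilde\Psi\|_{H^s_\gamma}$. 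That trade is \emph{not} automatic with the literal definition $\|u\|_{H^s_\gamma}=\|e^{-\gamma x}u\|_{H^s}$; it is only a feature of the $\gamma$-scaled Sobolev norm (the estimate $\gamma\|u\|_{H^{s-1}_\gamma}\lesssim\|u\|_{H^s_\gamma}$), and you nowhere invoke or justify it.

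The paper's statement \eqref{norestimate} itself gives away the intended fix: the isolated term $\frac{1}{\sqrt\gamma}\|f\|_{W^{1,\infty}(\Omega_X)}\|\nabla\widetilde\Psi\|_{H^{s-1}_\gamma(\Omega_X)}$ cannot arise from \eqref{esti2} at all (which knows nothing about $f$). It comes from a weighted $L^\infty$-estimate obtained directly from the transport equation \eqref{eqxi}: integrating $\rho_r(u_r\pd_x+w_r\pd_z)\xi_+=\hh_+^1$ along characteristics after multiplying by $e^{-\gamma x}$ yields $\gamma\|\xi_+\|_{L^\infty}\le C(K)\|\hh_+^1\|_{L^\infty}$, and $\|\hh_+^1\|_{L^\infty}\le C(K)(\|f\|_{W^{1,\infty}}+\|U\|_{W^{1,\infty}})$ because $\hh_+^1$ contains first derivatives of $\tilde f^\pm$. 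This is what converts the problematic $\sqrt\gamma$-weight into the $\gamma^{-1/2}$-weight and simultaneously produces the $\|f\|_{W^{1,\infty}}\|\nabla\widetilde\Psi\|_{H^{s-1}_\gamma}$ term. You should replace the use of \eqref{esti2} at this step by this transport $L^\infty$-bound; otherwise the bookkeeping of $\gamma$-powers you claim closes in the last paragraph does not actually close.
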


\subsubsection{Proof of Theorem \ref{hsestimate}}
After having the estimates given in \eqref{testimate} and
\eqref{norestimate} on tangential derivatives and normal derivatives
of solutions to the problem \eqref{pb}, we are going to prove the
estimate \eqref{test} given in Theorem
\ref{hsestimate}.

From the definition of the space $H^s_\gamma(\Omega_X)$,
\[
\|W\|_{H^s_\gamma(\Omega_X)}=\sum\limits_{k=0}\limits^s\|\pd_y^kW\|
_{L_y^2(H_\gamma^{s-k})},
\]
and combining estimates \eqref{norestimate} and
\eqref{testimate}, we  obtain that
\bel{hest}
\begin{split}
&\sqrt{\gamma}\|W\|_{H_\gamma^s(\Omega_X)} +\|
W^{nc}_{|y=0}\|_{H_\gamma^s(\omega_X)} +\|\phi\|_{H_\gamma^{s+1}(\omega_X)}
\leq
C(K)\left\{\frac{1}{\sqrt{\gamma}}\|F\|_{H_\gamma^{s}(\Omega_X)}+
\frac{1}{\gamma^{3/2}}\|F\|_{L_y^2(H_\gamma^{s+1})}\right.\\
&\hspace{.8in}+\frac{1}{\gamma}\|g\|_{H_\gamma^{s+1}(\omega_X)}+\frac{1}{\sqrt{\gamma}}\|W\|_
{H_\gamma^s(\Omega_X)}
+\frac{1}{\gamma^{3/2}}\|W\|_{L^{\infty}(\Omega_X)}\|(V, \nabla \widetilde
\Psi)\|_{H_\gamma^{s+2}(\Omega_X)}\\
&\hspace{.8in}+\frac{1}{\gamma}\left(\|W^{nc}_{|y=0}\|_ {L^\infty(\omega_X)}
+\|\phi\|_{W^{1,\infty}(\omega_X)}\right)\| (V, \pd_y
V, \nabla \widetilde \Psi)_{|y=0}\|_{H_\gamma^{s}(\omega_X)}\\
& \hspace{.8in}+\frac{1}{\sqrt{\gamma}}\left(\|f\|_{H_\gamma^{s}(\Omega_X)}
+\|f\|_
{L^\infty(\Omega_X)}\|\nabla
\widetilde\Psi\|_{H_\gamma^{s}(\Omega_X)}
+\|f\|_
{W^{1,\infty}(\Omega_X)}\|\nabla
\widetilde\Psi\|_{H_\gamma^{s-1}(\Omega_X)}\right)\\
&\hspace{.8in}\left.+\frac{1}{\sqrt{\gamma}}\left(\|U\|_{H_\gamma^{s}(\Omega_X)}+\|(V, \nabla V, \nabla \widetilde
\Psi)\|_{H_\gamma^{s}(\Omega_X)}\|W\|_{\ww(\Omega_X)}\right)\right\}.
\end{split}
\eeq

(1)
From the definition of $F$ given in \eqref{pb}, we have
\bel{Ff}\begin{split}
&\frac{1}{\sqrt{\gamma}}\|F\|_{H_\gamma^{s}(\Omega_X)}+
  \frac{1}{\gamma^{3/2}}\|F\|_{L_y^2(H_\gamma^{s+1})}\\
  \leq &C(K)\left( \frac{1}{\sqrt{\gamma}}\|f\|_{H_\gamma^{s}(\Omega_X)}+
  \frac{1}{\gamma^{3/2}}\|f\|_{L_y^2(H_\gamma^{s+1})}
  +\frac{1}{\gamma^{3/2}}\|f\|_{L^\infty(\Omega_X)}\|\nabla \widetilde \Psi\|_{H_\gamma^{s+1}(\Omega_X)}\right).
\end{split}
\eeq

From the transformation \eqref{utow} between $U$ and $W$, we
know that 
\begin{align}
&\|W\|_{L^\infty(\Omega_X)} \leq
C(K)\|U\|_{L^\infty(\Omega_X)}, \quad \|W\|_{\ww(\Omega_X)} \leq
C(K)\|U\|_{\ww(\Omega_X)},\label{wu1}  \\
&\|U\|_{H_\gamma^s(\Omega_X)}\leq
C(K)\left(\|W\|_{H_\gamma^s(\Omega_X)}+\|W\|_{L^\infty(\Omega_X)}\|\nabla
\widetilde\Psi\|_{H_\gamma^s(\Omega_X)}\right),\label{wu2}\\
&\|\PP U|_{y=0}\|_{H_\gamma^s(\omega_X)}\leq C(K)\left(\|
W^{nc}_{|y=0}\|_{H_\gamma^s(\omega_X)} +\|\PP
U|_{y=0}\|_{L^\infty(\omega_X)}\| \nabla
\widetilde\Psi_{|y=0}\|_{H_\gamma^s(\omega_X)}\right).\label{wu3}
\end{align}

By plugging the inequalities \eqref{Ff}, \eqref{wu1}, \eqref{wu2} and \eqref{wu3} into the right hand side of \eqref{hest}, and absorbing the term
$\frac{1}{\sqrt{\gamma}}\|W\|_{H_\gamma^s(\Omega_X)}$ by the
left hand side of \eqref{hest}, we deduce that
\bel{hu}
\begin{split}
&\sqrt{\gamma}\|U\|_{H_\gamma^s(\Omega_X)} +\|\PP
U|_{y=0}\|_{H_\gamma^s(\omega_X)}
+\|\phi\|_{H_\gamma^{s+1}(\omega_X)} \leq
C(K)\left\{\frac{1}{\gamma^{3/2}}\|f\|_{H_\gamma^{s+1}(\Omega_X)}
+\frac{1}{\gamma}\|g\|_{H_\gamma^{s+1}(\omega_X)}\right.\\
&\hspace{.8in}+\frac{1}{\gamma^{3/2}}\|f\| _{L^\infty(\Omega_X)}\|
\nabla \widetilde \Psi\|_{H_\gamma^{s+1}
(\Omega_X)}+\frac{1}{\sqrt{\gamma}}\|f\| _{\ww(\Omega_X)}\|
\nabla \widetilde \Psi\|_{H_\gamma^{s-1}
(\Omega_X)}\\
&
\hspace{.8in}+\frac{1}{\gamma^{3/2}}\|U\|_{L^{\infty}(\Omega_X)}\|(V, \nabla \widetilde
\Psi)\|_{H_\gamma^{s+2}(\Omega_X)}+\frac{1}{\sqrt{\gamma}}
\|U\|_{\ww(\Omega_X)}
\|(V, \nabla V, \nabla \widetilde
\Psi)\|_{H_\gamma^{s}(\Omega_X)}\\
&\hspace{.8in}
\left.+\frac{1}{\gamma}\left(\|\PP U|_{y=0}\|_{L^\infty(\omega_X)}
+\|\phi\|_{W^{1,\infty}(\omega_X)}\right)\|(V, \pd_y
V, \nabla \widetilde
\Psi)_{|y=0}\|_{H_\gamma^{s}(\omega_X)}
\right\}.
\end{split}
\eeq

(2) To conclude the estimate \eqref{test} from \eqref{hu}, the main remaining task is to control the terms with $L^\infty$
norm and $\ww$ norm on the right hand side of \eqref{hu}. Obviously, one has
\bel{ws1}
\begin{split}
\|f\|_{L^\infty(\Omega_X)} \leq \frac{Ce^{\gamma
X}}{\sqrt{\gamma}}\|f\|_{H_\gamma^2(\Omega_X)}, \quad
\|U\|_{\ww(\Omega_X)}\leq Ce^{\gamma
X}\|U\|_{H_\gamma^3(\Omega_X)}, \\
 \|\PP U|_{y=0}\|_{L^\infty(\omega_X)} \leq \frac{Ce^{\gamma
X}}{\gamma}\|\PP U|_{y=0}\|_{H_\gamma^2(\omega_X)}, \quad
\|\phi\|_{\ww(\omega_X)}\leq Ce^{\gamma
X}\|\phi\|_{H_\gamma^3(\omega_X)}. \end{split}\eeq 
Using the above estimates in \eqref{hu} and setting $s=3$, we get
\bel{estt}
\begin{split}
&\sqrt{\gamma}\|U\|_{H_\gamma^3(\Omega_X)} +\|\PP
U|_{y=0}\|_{H_\gamma^3(\omega_X)} +\|\phi\|_{H_\gamma^{4}(\omega_X)}
\leq C(K)\left\{
\frac{1}{\gamma^{3/2}}\|f\|_{H_\gamma^{4}(\Omega_X)}
+\frac{1}{\gamma}\|g\|_{H_\gamma^{4}(\omega_X)}\right.\\
&\hspace{.8in}
+\frac{e^{\gamma X}}{\gamma^{2}}
\|f\|_{H_\gamma^2(\Omega_X)}\|\nabla \widetilde
\Psi\|_{H_\gamma^{4} (\Omega_X)}
+\frac{e^{\gamma X}}{\sqrt{\gamma}}\|f\| _{H^3_\gamma(\Omega_X)}\|
\nabla \widetilde \Psi\|_{H_\gamma^{2}(\Omega_X)}\\
& \hspace{.8in}
+\frac{e^{\gamma
X}}{\gamma^2}\|U\|_{H_\gamma^{3}(\Omega_X)}\|(V, \nabla
\widetilde \Psi)\|_{H_\gamma^{5}(\Omega_X)}+
\frac{e^{\gamma X}}{\sqrt{\gamma}}
\|U\|_{H_\gamma^3(\Omega_X)}
\|(V, \nabla V, \nabla \widetilde
\Psi)\|_{H_\gamma^{3}(\Omega_X)}\\
&\hspace{.8in}
\left.+\left(\frac{e^{\gamma X}}{\gamma^2}\|\PP
U|_{y=0}\|_{H_\gamma^2(\omega_X)} +\frac{e^{\gamma
X}}{\gamma}\|\phi\|_{H_\gamma^3(\omega_X)}\right)\| (V, \pd_y V, \nabla \widetilde
\Psi)_{|y=0}\|_{H_\gamma^{3}(\omega_X)}\right\}.
\end{split}
\eeq

Under the assumption \eqref{asmp} given in Theorem
\ref{hsestimate},
\[
\|\nabla \widetilde
\Psi\|_{H_\gamma^{5}(\Omega_X)}+\|V\|_{H_\gamma^{5}(\Omega_X)}\leq
K,
\]
one can eliminate the terms
$\|U\|_{H_\gamma^3(\Omega_X)}$, $\|\PP
U|_{y=0}\|_{H_\gamma^2(\omega_X)}$ and
$\|\phi\|_{H_\gamma^{3}(\omega_X)}$ on the right hand side of
\eqref{estt} by fixing $\gamma$ large enough, and concludes
\bel{estt-1}
\begin{split}
&\sqrt{\gamma}\|U\|_{H_\gamma^3(\Omega_X)} +\|\PP
U|_{y=0}\|_{H_\gamma^3(\omega_X)} +\|\phi\|_{H_\gamma^{4}(\omega_X)}\\
&\hspace{.8in}\leq C(K)\left\{
\frac{1}{\gamma^{3/2}}\|f\|_{H_\gamma^{4}(\Omega_X)}
+\frac{1}{\gamma}\|g\|_{H_\gamma^{4}(\omega_X)}+\frac{e^{\gamma X}}{\gamma^{2}}
\|f\|_{H_\gamma^2(\Omega_X)}
+\frac{e^{\gamma X}}{\sqrt{\gamma}}\|f\| _{H^3_\gamma(\Omega_X)}\right\},
\end{split}
\eeq
which implies
\[
\|U\|_{W^{1,\infty}(\Omega_X)}+\|\PP U|_{y=0}\|_{L^\infty(\omega_X)}
  +\|\phi\|_{W^{1,\infty}(\omega_X)}\leq C(K)(\|f\|_{H^4(\Omega_X)}
  +\|g\|_{H^4(\omega_X)})
\]
by using \eqref{ws1}, and fixing a large $\gamma>0$ with $\gamma X\le 1$.

Substituting the above inequality into \eqref{hu}, it follows
\bel{hu-1}
\begin{split}
&\sqrt{\gamma}\|U\|_{H_\gamma^s(\Omega_X)} +\|\PP
U|_{y=0}\|_{H_\gamma^s(\omega_X)}
+\|\phi\|_{H_\gamma^{s+1}(\omega_X)} \leq
C(K)\left\{\frac{1}{\gamma^{3/2}}\|f\|_{H_\gamma^{s+1}(\Omega_X)}
+\frac{1}{\gamma}\|g\|_{H_\gamma^{s+1}(\omega_X)}\right.\\
&\hspace{.3in}+\frac{1}{\gamma^{3/2}}\|f\| _{L^\infty(\Omega_X)}\|
\nabla \widetilde \Psi\|_{H_\gamma^{s+1}
(\Omega_X)}+\frac{1}{\sqrt{\gamma}}\|f\| _{\ww(\Omega_X)}\|
\nabla \widetilde \Psi\|_{H_\gamma^{s-1}
(\Omega_X)}\\
&
\hspace{.3in}+\left.\left(\frac{1}{\gamma^{3/2}}\|(V, \nabla \widetilde
\Psi)\|_{H_\gamma^{s+2}(\Omega_X)}+\frac{1}{\gamma}\|\pd_y
V_{|y=0}\|_{H_\gamma^{s}(\omega_X)}\right)(\|f\|_{H^4(\Omega_X)}
  +\|g\|_{H^4(\omega_X)})
\right\}.
\end{split}
\eeq
This completes the proof of the tame estimate given in Theorem
\ref{hsestimate}.

\vspace{.1in}
By fixing a large $\gamma>0$ and then $\gamma X\le 1$ in Theorem
\ref{hsestimate}, we immediately obtain,

\begin{coro}\label{coro4.7}
For any $s\ge 0$, assume that the non-planar contact discontinuity
given in \eqref{vbstate} satisfies \eqref{2.15}, \eqref{3.3} and
$(V_{r,l}, \nabla \widetilde \Psi_{r,l})\in
H^{s+2}(\Omega_X)\cap H^{5}(\Omega_X)$ with
\bel{asmp-1}
\|\nabla
\widetilde \Psi\|_{H^{5}(\Omega_X)}+\|
V\|_{H^{5}(\Omega_X)}\leq K. \eeq
Then, for the linear problem \eqref{lpb}, there exists a constant
$K_0>0$,  for all $K\leq K_0$, there is a constant $C(K,s)$ depending on $K$ and $s$, such that
for all $(U, \phi)\in
(H^{s+2}(\Omega_X)\times H^{s+2}(\omega_X))\cap (H^{5}(\Omega_X)\times H^{5}(\omega_X))$, one has
\bel{test-1}
\begin{split}
&\|U\|_{H^s(\Omega_X)} +\|\PP
U|_{y=0}\|_{H^s(\omega_X)}
+\|\phi\|_{H^{s+1}(\omega_X)} \\
&
\hspace{0.6in}\leq
C(K,s)\left\{\|f\|_{H^{s+1}(\Omega_X)}
+\|g\|_{H^{s+1}(\omega_X)}+\|(V, \nabla \widetilde
\Psi)\|_{H^{s+2}(\Omega_X)}(\|f\|_{H^4(\Omega_X)}
  +\|g\|_{H^4(\omega_X)})
\right\}.
\end{split}
\eeq
\end{coro}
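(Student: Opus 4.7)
The plan is to deduce Corollary~\ref{coro4.7} directly from the weighted tame estimate \eqref{test} of Theorem~\ref{hsestimate} by specializing the weight parameter. The central observation is that once $\gamma$ is fixed and $X$ is restricted by $\gamma X\le 1$, the weighted space $H^s_\gamma(\Omega_X)$ and the ordinary Sobolev space $H^s(\Omega_X)$ coincide as vector spaces with equivalent norms, because $e^{-1}\le e^{-\gamma x}\le 1$ on $\Omega_X$; the same equivalence holds on $\omega_X$. The equivalence constants depend on $\gamma$ but on nothing else, so after fixing $\gamma=\gamma_s$ from Theorem~\ref{hsestimate} they are harmless and can be absorbed into $C(K,s)$.

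First I would check that the unweighted hypothesis \eqref{asmp-1} implies the weighted hypothesis \eqref{asmp} of Theorem~\ref{hsestimate}: since $\|\cdot\|_{H^5_\gamma(\Omega_X)}\le \|\cdot\|_{H^5(\Omega_X)}$, the bound $\|\nabla\widetilde\Psi\|_{H^5(\Omega_X)}+\|V\|_{H^5(\Omega_X)}\le K$ yields \eqref{asmp} with the same $K$, so choosing $K_0=K_s$ and requiring $\gamma X\le 1$ puts us inside the scope of Theorem~\ref{hsestimate}. Applying \eqref{test} at the fixed value $\gamma=\gamma_s$ gives a weighted inequality whose left-hand side, via the norm equivalence above, controls $\|U\|_{H^s(\Omega_X)}+\|\PP U|_{y=0}\|_{H^s(\omega_X)}+\|\phi\|_{H^{s+1}(\omega_X)}$ up to a multiplicative constant depending only on $K$ and $s$.

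Second, I would convert each term on the right-hand side of \eqref{test} to its unweighted counterpart. The prefactors $\gamma^{-3/2}$ and $\gamma^{-1}$ become positive constants absorbed into $C(K,s)$, and each weighted norm dominates the corresponding unweighted one on the bounded strip $\Omega_X$. The boundary factor $\|\partial_y V|_{y=0}\|_{H^s_\gamma(\omega_X)}$ is bounded by $\|V\|_{H^{s+2}(\Omega_X)}$ via the standard trace theorem, so it fits into the product $\|(V,\nabla\widetilde\Psi)\|_{H^{s+2}(\Omega_X)}(\|f\|_{H^4(\Omega_X)}+\|g\|_{H^4(\omega_X)})$ on the right-hand side of \eqref{test-1}.

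The only remaining task is to clean up the $L^\infty$- and $W^{1,\infty}$-type factors that still appear on the right-hand side of \eqref{test}. Since $\Omega_X\subset\R^3$, the Sobolev embedding $H^4(\Omega_X)\hookrightarrow W^{1,\infty}(\Omega_X)$ gives $\|f\|_{L^\infty(\Omega_X)}+\|f\|_{W^{1,\infty}(\Omega_X)}\le C\|f\|_{H^4(\Omega_X)}$; the products $\|f\|_{L^\infty}\|\nabla\widetilde\Psi\|_{H^{s+1}_\gamma}$ and $\|f\|_{W^{1,\infty}}\|\nabla\widetilde\Psi\|_{H^{s-1}_\gamma}$ then fold into $\|(V,\nabla\widetilde\Psi)\|_{H^{s+2}(\Omega_X)}\|f\|_{H^4(\Omega_X)}$, since $H^{s-1}\hookrightarrow H^{s+2}$. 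No new analytical input is required; this corollary is essentially a bookkeeping consequence of Theorem~\ref{hsestimate}, and the main point to keep track of is that every higher-regularity factor on $(V,\nabla\widetilde\Psi)$ is of $H^{s+2}$-type while every higher-regularity factor on $(f,g)$ is of $H^{s+1}$- or $H^4$-type, so the product structure of \eqref{test-1} accommodates them all.
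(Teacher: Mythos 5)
Your proposal is correct and follows essentially the same route as the paper: the paper's own proof is the single sentence ``By fixing a large $\gamma>0$ and then $\gamma X\le 1$ in Theorem~\ref{hsestimate}, we immediately obtain,'' and your argument simply spells out the underlying bookkeeping (equivalence of $H^s_\gamma$ and $H^s$ norms on $\Omega_X$ for a fixed $\gamma$ with $\gamma X\le 1$, absorption of the fixed powers of $\gamma$ into the constant, and the trace estimate $\|\partial_y V|_{y=0}\|_{H^s(\omega_X)}\lesssim\|V\|_{H^{s+2}(\Omega_X)}$). One small correction: the statement \eqref{test} that you are quoting has already absorbed the $L^\infty$- and $W^{1,\infty}$-type factors of $f$ via the embedding $H^4(\Omega_X)\hookrightarrow W^{1,\infty}(\Omega_X)$ (that absorption actually happens between \eqref{hu-1} and the final form \eqref{test} in the theorem's proof), so your last paragraph is not needed once you start from \eqref{test} as written, though the reasoning there is sound and would indeed be required if one instead started from the penultimate displayed inequality \eqref{hu-1}.
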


\section{Iteration scheme}
\setcounter{equation}{0}

The remainder of this work is to obtain the existence of solutions to the nonlinear
problem \eqref{2.11} by constructing a proper iteration scheme.  From the estimate \eqref{test-1},  we know that there is loss of regularity of solutions to the linearized problem \eqref{lpb} with respect to $f$ and $g$, so as in \cite{Chen-Wang, cou2,  tra1}, we shall adapt the Nash-Moser-H\"ormander iteration  scheme to study the nonlinear problem \eqref{2.11}

\subsection{Compatibility conditions and the zero-order approximate solution}

Given initial data $(U_0^\pm(y,z), \psi_0(z))$ on $\{x=0\}$ with
$U_0^\pm(y,z)=\overline{U}_{r,l}+\widetilde
U_0^\pm(y,z)$, $\psi_0(z)=\psi_0(0,z)$ satisfying $\widetilde
U_0^\pm\in
H^{s+\frac{1}{2}}(\R_+^2)$, $\psi_0\in H^{s+1}(\R)$ for a fixed
$s\ge 3$, and
\begin{equation}
\text{Supp} ~\widetilde U_0^\pm\subseteq\{y\ge0, y^2+z^2\le
1\},\quad \text{Supp}~ \psi_0\subseteq[-1,1],
\end{equation} we first
state the compatibility conditions for the existence of a
classical solution to the nonlinear problem \eqref{2.11}.

As in \eqref{2.10}, first we extend $\psi_0(z)$ to $\tilde \Psi_0^\pm(y,z)$ supported in
$\{y\ge 0, \sqrt{y^2+z^2}\le 1+C(X)\}$ with $C(X)$ being a function
of $X$ such that $\tilde \Psi_0^\pm\in H^{s+\frac{3}{2}}(\R_+^2)$ with
\begin{equation}
  \|\tilde \Psi_0^\pm\|_{H^{s+\frac{3}{2}}(\R_+^2)}\le
  C\|\psi_0\|_{H^{s+1}(\R)},
\end{equation}
and $\Psi_0^\pm(y,z)=\pm y+\tilde \Psi_0^\pm$
satisfy
\begin{equation}
  \pd_y\Psi_0^+\ge \frac{5}{6}, \quad \pd_y \Psi_0^-\le -\frac{5}{6}
\qquad {\rm for~all}~ y>0.
\end{equation}

Set
$$\widetilde U^\pm(x,y,z)=U^\pm(x,y,z)-\overline{U}_{r,l}, \quad
\tilde \Psi^\pm(x,y,z)=\Psi^\pm(x,y,z)\mp y. $$
From the equations of $\Psi^\pm$ and $U^\pm$ given in \eqref{2.10}
and \eqref{2.11}, we can determine $\pd_x^{j+1}\tilde \Psi^\pm$ and
$\pd_x^{j+1} \widetilde U^\pm$ on $\{x=0\}$ by induction on $j\in\N$ in the following,
\begin{equation}\label{psi-j}
\pd_x^{j+1} \Psi^\pm=\pd_x^j\left(\frac{1}{u^\pm}
(v^\pm-\pd_z \Psi^\pm w^\pm)\right)
\end{equation}
and
\begin{equation}\label{u-j}
  \pd_x^{j+1} U^\pm=\pd_x^j\left(-A_1^{-1}\left(\frac{1}{ \Psi^\pm_y}(A_2-\Psi_x^\pm
A_1-\Psi_z^\pm A_3)\pd_y U^\pm+A_3\pd_z
U^\pm\right)\right).
\end{equation}
Thus, for a fixed $k\le s$, the data $(U_0^\pm, \Psi_0^\pm)$ are compatible up to order
$k$ for the problem \eqref{2.10} and \eqref{2.11}, if 
\begin{equation}\label{defj}
\begin{split}
  &\pd_x^j \Psi^+=\pd_x^j \Psi^-,\qquad \pd_x^j p^+=\pd_x^j p^-
\end{split}
\end{equation}
hold for all $0\le j\le k$ at $\{y=0\}\cap\{x=0\}$.

From now on, we assume the following hypothesis:
\begin{quote}
\item[(H)] for a fixed $s>\frac{27}{2}$,  the initial data
$$U^\pm|_{x=0}=\overline{U}_{r,l}+{\widetilde U}_0^\pm(y,z), \quad
\Psi^\pm|_{x=0}=\pm y+{\widetilde \Psi}_0^\pm(y,z)
$$
with ${\widetilde U}_0^\pm\in H^{s-{\frac 12}}(\R_+^2),
{\widetilde \Psi}_0^\pm\in H^{s+{\frac 12}}(\R_+^2)$, satisfy the compatibility
conditions of the problems \eqref{2.11} and \eqref{2.10} up to order $s-1$.
\end{quote}

Set $\widetilde U_0^{j, \pm}=\pd_x^j \widetilde U^\pm|_{x=0}, \tilde
\Psi_0^{j, \pm}=\pd_x^j\tilde \Psi^\pm|_{x=0}$ ($0\le j\le s-1$) defined
in \eqref{u-j} and \eqref{psi-j} respectively, then by the inverse trace theorem, we can construct functions $\Psi^{a, \pm},
u^{a, \pm}, w^{a, \pm}, p^{a,\pm}$ in the domain $\Omega_X$ satisfying
\bel{app}
\begin{cases}
(u^{a, \pm}-\bar u^\pm, w^{a, \pm}-\bar w^\pm, p^{a, \pm}-\bar
p)\in H^{s}(\Omega_X), \quad  \Psi^{a, \pm}\mp y\in H^{s+1}(\Omega_X),\\[2mm]
\pd_x^j(u^{a, \pm}-\bar u^\pm, w^{a, \pm}-\bar w^\pm, p^{a, \pm}-\bar
p)|_{x=0}=(\tilde u^{j, \pm}_0, \tilde w^{j, \pm}_0, \tilde p^{j,
\pm}_0), \quad \pd_x^j\tilde \Psi^{a, \pm}|_{x=0}=\tilde \Psi_0^{j,
\pm},\\[2mm]
\Psi^{a,+}=\Psi^{a,-}, \quad p^{a,+}=p^{a,-}, \qquad {\rm on}~\{y=0\}
\end{cases}
\eeq
for all $0\le j\le s-1$.

Define
\bel{app-v}
v^{a,\pm}=\pd_x\Psi^{a,\pm}u^{a, \pm}+\pd_z\Psi^{a,\pm}w^{a, \pm}.
\eeq
By using the above compatibility conditions, we know that the approximate solutions $U^a=(U^{a,+}, U^{a, -})^T$ with $U^{a,\pm}=(u^{a,\pm}, v^{a,\pm},w^{a,\pm},p^{a,\pm})^T$, and $\Psi^a=(\Psi^{a,+}, \Psi^{a, -})^T$ satisfy
\begin{equation}\label{5.8}
\begin{cases}
\pd_x^j L(U^{a,\pm}, \Psi^{a, \pm})U^{a,\pm}|_{x=0}=0, \quad \text{for}
\,\,j=0,\ldots,
s-1,\\
{\mathcal B}(U^{a,+}, U^{a,-},\psi^a)=0, \quad \text{on} \,\, \{y=0\},
\end{cases}
\end{equation}
where $\psi^a=\Psi^{a,+}|_{y=0}=\Psi^{a,-}|_{y=0}
$, $L(U^\pm, \Psi^\pm)U^\pm$ is defined in \eqref{212} and ${\mathcal B}(U^{a,+}, U^{a,-},
\psi^a)$ denotes the boundary conditions given in \eqref{2.11},
and
\begin{equation}\label{estua}
\begin{split}
  &\|\dot{U}^{a, +}\|_{H^{s}(\Omega_X)}+\|\dot{U}^{a, -}\|_{H^{s}(\Omega_X)}+\|\dot{\Psi}^{a, +}\|_{H^{s+1}(\Omega_X)}
  +\|\dot{\Psi}^{a,
  -}\|_{H^{s+1}(\Omega_X)}\\
  &\qquad +\|\psi^a\|_{H^{s+\frac{1}{2}}(\omega_X)}
  \le C\left(\|\widetilde U_0^\pm\|_{H^{s-\frac{1}{2}}(\R_+^2)}
  +\|\psi_0\|_{H^{s}(\R)}\right),
  \end{split}
\end{equation}
with
$$\dot{U}^{a,+}=U^{a, +}-\overline U_r, \quad \dot{U}^{a,-}=U^{a, -}-\overline U_l, \quad \dot{\Psi}^{a,\pm}=\Psi^{a, \pm}\mp y.$$

Moreover, if $\widetilde U_0^\pm, \psi_0$ are properly small, we have
\[
\pd_y\Psi^{a,+}\ge \frac{2}{3}, \quad \pd_y\Psi^{a,-}\le
-\frac{2}{3}, \quad \forall\, (x,y,z)\in \Omega_X.
\]

Denoting by
\begin{equation}\label{5.11}
\begin{cases}
V^\pm=U^\pm-U^{a,\pm}, \\[2mm]
\Phi^\pm=\Psi^\pm-\Psi^{a,\pm},
\end{cases}
\end{equation}
then from (\ref{5.8}), (\ref{2.11}) and (\ref{2.10}) we know that $(V^\pm, \Phi^\pm)$
satisfy the following problem:
\begin{equation}\label{5.12}
\begin{cases}
{\mathcal L}(V^\pm, \Phi^\pm)V^\pm=f_a^\pm, \quad {\rm in}~ \{x>0, ~y>0\}\\[2mm]
{\mathcal E}(V^\pm, \Phi^\pm)=0, \quad {\rm in}~ \{x>0, ~y>0\}\\[2mm]
{\B}(V^+,V^-, \phi)=0,\quad {\rm on}~ \{y=0\}\\[2mm]
V^\pm|_{x\le 0}=0, \quad \Phi^\pm|_{x\le 0}=0
\end{cases}\end{equation}
where $\phi=\Phi^+|_{y=0}=\Phi^-|_{y=0}$,
$$f_a^\pm=\begin{cases}
-L(U^{a,\pm}, \Psi^{a,\pm})U^{a,\pm}, \quad {\rm if}~x>0\\[2mm]
0, \quad {\rm if}~x\le 0
\end{cases}$$
$${\mathcal L}(V^\pm, \Phi^\pm)V^\pm=L(U^{a,\pm}+V^\pm, \Psi^{a,\pm}+\Phi^\pm)(U^{a,\pm}+V^\pm)-
L(U^{a,\pm}, \Psi^{a,\pm})U^{a,\pm},$$
$${\mathcal E}(V^\pm, \Phi^\pm)=
\partial_x(\Psi^{a,\pm}+\Phi^\pm)V_1^\pm-V_2^\pm+\partial_z(\Psi^{a,\pm}+\Phi^\pm)V_3^\pm+\partial_x\Phi^\pm u^{a,\pm}+\partial_z\Phi^\pm w^{a,\pm},$$
and
$${\B}(V^+,V^-, \phi)={\mathcal B}(U^{a,+}+V^+,U^{a,-}+V^-, \psi^a+\phi).$$

Moreover, from \eqref{estua}, we have
\begin{equation}\label{fa}
\|f^\pm_a\|_{H^{s}(\Omega_X)}\le C\left(\|\widetilde
U_0^\pm\|_{H^{s+\frac{1}{2}}(\R_+^2)}
  +\|\psi_0\|_{H^{s+1}(\R)}\right).
\end{equation}

\subsection{Description of the iteration scheme.}

To construct the Nash-Moser-H\"ormander iteration scheme for the nonlinear problem \eqref{5.12},
first let us recall a family of smoothing operators from \cite{ali, Chen-Wang1, cou2} as follows:
\begin{equation}\label{5.14}
\{S_\theta\}_{\theta>0}:~ L^2(\Omega_X)\longrightarrow \bigcap_{s\ge 0}H^s(\Omega_X)
\end{equation}
satisfying
\begin{equation}\label{5.15}
\begin{cases}
\|S_\theta u\|_{H^s(\Omega_X)}\le C\theta^{(s-\alpha)_+}\|u\|_{H^\alpha(\Omega_X)}, \quad {\rm for ~all}~s, \alpha\ge 0\cr
\|S_\theta u-u\|_{H^s(\Omega_X)}\le C\theta^{s-\alpha}\|u\|_{H^\alpha(\Omega_X)}, \quad {\rm for ~all}~0\le s\le\alpha\cr
\|\frac{d}{d\theta}S_\theta u\|_{H^s(\Omega_X)}\le C\theta^{s-\alpha-1}\|u\|_{H^\alpha(\Omega_X)}, \quad {\rm for ~all}~s,\alpha\ge 0
\end{cases}\end{equation}
and
\begin{equation}\label{5.16}
\|(S_\theta u_+-S_\theta u_-)|_{y=0}\|_{H^s(\omega_X)}\le C\theta^{(s+1-\alpha)_+}\|(u_+-u_-)|_{y=0}\|_{H^\alpha(\omega_X)},
\quad {\rm for ~all}~s, \alpha\ge 0.
\end{equation}

Similarly, one has a family of smoothing operators, still denoted by $\{S_\theta\}_{\theta>0}$ acting on $H^s(\omega_X)$, and
(\ref{5.15}) holds as well for norms of $H^s(\omega_X)$. Let $\theta_0\ge 1$, $\theta_n=\sqrt{\theta_0^2+n}$ for any $n\ge 1$, and $S_{\theta_n}$ be the associated smoothing
operators defined above.

For the problem \eqref{5.12}, let $V^{\pm}_0=\Phi^{\pm}_0\equiv 0$, and suppose that for any fixed $n\ge 0$,  the approximate solutions $\{(V^{\pm}_k, \Phi^{\pm}_k)\}_{1\le k\le n}$ of \eqref{5.12} have been constructed, satisfying
\begin{equation}\label{5.17}
\begin{cases}
(V^{\pm}_k, \Phi^{\pm}_k)|_{x\le 0}=0, \cr
\Phi^{+}_k|_{y=0}=\Phi^{-}_k|_{y=0}=\phi_k,
\end{cases}(1\le k\le n).
\end{equation}
The $(n+1)$-th approximate solutions $(V^{\pm}_{n+1}, \Phi^{\pm}_{n+1})$ of \eqref{5.12} is constructed as
\begin{equation}\label{defn1}
  V^{\pm}_{n+1}=V^{\pm}_{n}+\delta V^{\pm}_{n},
\quad \Phi^{\pm}_{n+1}=\Phi^{\pm}_{n}+\delta\Phi^{\pm}_{n}, \quad \phi_{n+1}=\phi_n+\delta\phi_n,
\end{equation}
where the increments $\delta V^{\pm}_{n}, \delta \Phi^{\pm}_n, \delta\phi_n$ satisfy the following linear problem
\begin{equation}\label{5.19}
  \begin{cases}
    L'_e(U^{a, \pm}+V^\pm_{n+\frac{1}{2}}, \Psi^{a, \pm}+S_{\theta_n}\Phi^\pm_{n})\delta \tilde V^\pm_n=f^\pm_n, \quad \text{in} ~\Omega_X,\\
    B'_{n+\frac{1}{2}}(\delta \tilde V^+_n, \delta \tilde V^-_n, \delta \phi_n)=g_n,\quad \text{on} \,\,\{y=0\}\\
    \delta \tilde V^\pm_n=0, \quad \delta \phi_n=0, \quad \text{for}\,\, x\le 0,
  \end{cases}
\end{equation}
where $L'_e(U^\pm, \Phi^\pm)V^\pm$ is the effective linearized operator defined in \eqref{lieqr},  $V^\pm_{n+\frac{1}{2}}$ is a modified state of $V^\pm_n$ such that the constraint \eqref{eikonal} holds for $(U^{a,\pm}+V^\pm_{n+\frac{1}{2}}, \Psi^{a,\pm}+S_{\theta_n}\Phi^\pm_{n})$, which will be given in \eqref{f3.73}-\eqref{f3.74},
\[
B'_{n+\frac{1}{2}}=B'_{e, (U^a+V_{n+\frac{1}{2}}, \psi^a+S_{\theta_n}\phi_{n})}
\]
is the effective boundary operator defined in \eqref{lbc} at the state $(U^a+V_{n+\frac{1}{2}}, \psi^a+S_{\theta_n}\phi_{n})$,
\bel{5.20}
\delta \tilde V^\pm_n=\delta V^\pm_n-\delta \Phi^\pm_n\frac{\pd_y(U^{a, \pm}+V^\pm_{n+\frac{1}{2}})}{\pd_y(\Psi^{a, \pm}+S_{\theta_n}\Phi^\pm_{n})}
\eeq
is the good unknown introduced in \eqref{vgu}.

To define the source term $f_n^\pm$ for the equations of \eqref{5.19}, obviously, we have
\begin{equation}\label{5.21}
\begin{split}
  &\,\,L(U^{a, \pm}+V_{n+1}^\pm, \Psi^{a,\pm}+\Phi^\pm_{n+1})(U^{a,\pm}+V^\pm_{n+1})-L(U^{a,\pm}+V^\pm_{n}, \Psi^{a,\pm}+\Phi^\pm_{n}))(U^{a,\pm}+V^\pm_{n})\\
  &=L'(U^{a,\pm}+V^\pm_n, \Psi^{a,\pm}+\Phi^\pm_n)(\delta V^\pm_n, \delta \Phi_n^\pm)+e^{\pm,1}_n\\
  &=L'(U^{a,\pm}+S_{\theta_n}V^\pm_n, \Psi^{a,\pm}+S_{\theta_n}\Phi^\pm_n)(\delta V^\pm_n, \delta \Phi_n^\pm)+e^{\pm,1}_n+e^{\pm,2}_n\\
  &=L'(U^{a,\pm}+V^\pm_{n+\frac{1}{2}}, \Psi^{a,\pm}+S_{\theta_n}\Phi^\pm_n)(\delta V^\pm_n, \delta \Phi_n^\pm)+e^{\pm,1}_n+e^{\pm,2}_n+e^{\pm,3}_n\\
  &=L_e'(U^{a,\pm}+V^\pm_{n+\frac{1}{2}}, \Psi^{a,\pm}+S_{\theta_n}\Phi^\pm_n)\delta \tilde  V^\pm_n+e^{\pm,1}_n+e^{\pm,2}_n+e^{\pm,3}_n+e^{\pm,4}_n\\
  \end{split}
\end{equation}
with errors $e^{\pm,1}_n$ arising from the Newton iteration, $e^{\pm,2}_n$ and $e^{\pm,3}_n$ arising from the substitutions in the coefficient functions of the linearized operator $L'$ from $V_n^\pm$ to $S_{\theta_n}V_n^\pm$, and from $S_{\theta_n}V_n^\pm$ to $V_{n+{\frac 12}}^\pm$ respectively, and $$e^{\pm,4}_n=\frac{\delta\Phi_n^\pm}{\pd_y(\Psi^{a,\pm}+S_{\theta_n}\Phi^\pm_n)}\pd_y
\left[L(U^{a,\pm}+V^\pm_{n+\frac{1}{2}}, \Psi^{a, \pm}+S_{\theta_n}\Phi^\pm_n)(U^{a,\pm}+V^\pm_{n+\frac{1}{2}})\right]$$
arising from the use of good unknown $\delta\tilde{V}_n^\pm$ from $\delta {V}_n^\pm$. To guarantee the limit of $(V^\pm_n, \Phi^\pm_n)$ defined in \eqref{defn1}-\eqref{5.19} being the solution of the problem \eqref{5.12}, we define the source term $f^\pm_n$  to satisfy,
\[
\sum\limits_{j=0}\limits^n f^\pm_j+S_{\theta_n}E^\pm_n=S_{\theta_n}f^\pm_a, \qquad
\forall n\ge 1
\] where $f_0^\pm=S_{\theta_0}f^\pm_a$, and $E^\pm_n=\sum\limits_{l=0}\limits^{n-1}e_l^{\pm}$ with $e^\pm_{l}=\sum\limits_{j=1}\limits^4 e_l^{\pm,j}$,  i.e.
\bel{f-n}
f_n^\pm=(S_{\theta_n}-S_{\theta_{n-1}})f_a^\pm-(S_{\theta_n}-S_{\theta_{n-1}})E^\pm_{n-1}
-S_{\theta_{n}}e^\pm_{ n-1}.
\eeq

The source term $g_n$ of the boundary condition given in \eqref{5.19} can be defined in a similar way.
It is obvious that
\begin{equation}\label{5.23}
\begin{array}{ll}
&{\mathcal B}(U^{a,+}+V^+_{n+1}, U^{a,-}+V^{-}_{n+1}, \psi^a+\phi_{n+1})-
{\mathcal B}(U^{a,+}+V^+_{n}, U^{a,-}+V^{-}_{n}, \psi^a+\phi_{n})\\
&\hspace{.2in}={\mathcal B}'_{(U^{a,\pm}+V^{\pm}_n, \psi^a+\phi_{n})}\cdot (\delta {V^{+}_n},\delta {V^{-}_n}, \delta {\phi_{n}})+\tilde{e}_{ n}^{(1)}\\[2mm]
&\hspace{.2in}={\mathcal B}'_{(U^{a,\pm}+S_{\theta_n}V^{\pm}_n, \psi^a+S_{\theta_n}\phi_{n})}\cdot (\delta {V^{+}_n},\delta {V^{-}_n}, \delta {\phi_{n}})+\tilde{e}_{ n}^{(1)}+\tilde{e}_{ n}^{(2)}\\[2mm]
&\hspace{.2in}={\mathcal B}'_{(U^{a,\pm}+V^{\pm}_{n+{\frac 12}}, \psi^a+S_{\theta_n}\phi_n)}\cdot (\delta {V^{+}_n},\delta {V^{-}_n}, \delta {\phi_{n}})+\tilde{e}_{ n}^{(1)}+\tilde{e}_{ n}^{(2)}+\tilde{e}_{ n}^{(3)}\\[2mm]
&\hspace{.2in}=
B'_{n+\frac{1}{2}}(\delta {\tilde V^{+}_n},\delta {\tilde V^{-}_n}, \delta {\phi_{n}})
+\tilde{e}_{ n}^{(1)}+\tilde{e}_{ n}^{(2)}+\tilde{e}_{ n}^{(3)}+\tilde{e}_{ n}^{(4)}\\[2mm]
\end{array}
\end{equation}
with the components of $\tilde{e}_{n}^{(4)}$ being
$$\tilde{e}_{n,1}^{(4)}=\frac{\delta\phi_n}{\partial_y(\Psi^{a,+}+S_{\theta_n}\Phi_n^+)}
\left(
(\psi^a+S_{\theta_n}\phi_n)_x(U_1^{a,+}+V_{n+{\frac 12},1}^+)_y
-(U_2^{a,+}+V_{n+{\frac 12},2}^+)_y+
(\psi^a+S_{\theta_n}\phi_n)_z(U_3^{a,+}+V_{n+{\frac 12},3}^+)_y
\right),$$
$$\tilde{e}_{n,2}^{(4)}=\frac{\delta\phi_n}{\partial_y(\Psi^{a,-}+S_{\theta_n}\Phi_n^-)}
\left(
(\psi^a+S_{\theta_n}\phi_n)_x(U_1^{a,-}+V_{n+{\frac 12},1}^-)_y
-(U_2^{a,-}+V_{n+{\frac 12},2}^-)_y+
(\psi^a+S_{\theta_n}\phi_n)_z(U_3^{a,-}+V_{n+{\frac 12},3}^-)_y
\right)$$
and
$$\tilde{e}_{n,3}^{(4)}=\delta\phi_n\left(
\frac{\partial_y(U_4^{a,+}+V_{n+{\frac 12},4}^+)}{\partial_y(\Psi^{a,+}+S_{\theta_n}\Phi_n^+)}-\frac{\partial_y(U_4^{a,-}+V_{n+{\frac 12},4}^-)}{\partial_y(\Psi^{a,-}+S_{\theta_n}\Phi_n^-)}
\right)
$$

Noting that
$
{\mathcal B}(U^{a,+}, U^{a,-}, \psi^a)=0
$, to guarantee the limit of $(V^\pm_n, \Phi^\pm_n)$ satisfies the boundary condition given in \eqref{5.12}, we define the source term $g_n$  given in \eqref{5.19} to satisfy,
\begin{equation}\label{g-n}
\sum_{j=0}^ng_j+S_{\theta_n}(\sum_{j=0}^{n-1}\tilde{e}_{j})=0
\end{equation}
by induction on $n$, with $g_0=0$, and
$\tilde{e}_{n}=\sum_{j=1}^4\tilde{e}_{n}^{(j)}
$.

The next goal is to construct $\delta\Phi^{\pm}_n$ such that $\delta\Phi^{\pm}_n|_{y=0}=\delta\phi^{n}$, this will use the idea from \cite{cou2}.

From the first two components of the boundary conditions given in (\ref{5.19}), we know that $\delta\phi^n$ satisfies
\begin{equation}\label{5.25}
\begin{array}{l}
(U^{a,+}_1+V^+_{n+\frac 12, 1})\partial_x(\delta\phi_n)+(U^{a,+}_3+V^+_{n+\frac 12, 3})\partial_z(\delta\phi_n)\\
\hspace{.3in}-\delta\tilde{V}^+_{n,2}+\partial_{x}(\psi^a+S_{\theta_n}\phi_n)\delta\tilde{V}^+_{n,1}
+\partial_{z}(\psi^a+S_{\theta_n}\phi_n)\delta\tilde{V}^+_{n,3}=
g_{n,1}
\end{array}
\end{equation}
and
\begin{equation}\label{5.26}
\begin{array}{l}
(U^{a,-}_1+V^-_{n+\frac 12, 1})\partial_x(\delta\phi_n)+(U^{a,-}_3+V^-_{n+\frac 12, 3})\partial_z(\delta\phi_n)\\
\hspace{.3in}-\delta\tilde{V}^-_{n,2}+\partial_{x}(\psi^a+S_{\theta_n}\phi_n)\delta\tilde{V}^-_{n,1}
+\partial_{z}(\psi^a+S_{\theta_n}\phi_n)\delta\tilde{V}^-_{n,3}=
g_{n,2}
\end{array}
\end{equation}
on $\{y=0\}$, this inspires us to define $\delta\Phi^{\pm}_n$ by solving the problems
\begin{equation}\label{5.27}
\begin{cases}
(U^{a,+}_1+V^+_{n+\frac 12, 1})\partial_x(\delta\Phi^+_n)+(U^{a,+}_3+V^+_{n+\frac 12, 3})\partial_z(\delta\Phi^+_n)\\
\hspace{.3in}-\delta\tilde{V}^+_{n,2}+\partial_{x}(\Psi^{a,+}+S_{\theta_n}\Phi_n^+)\delta\tilde{V}^+_{n,1}
+\partial_{z}(\Psi^{a,+}+S_{\theta_n}\Phi_n^+)\delta\tilde{V}^+_{n,3}=
{\mathbb E}g_{n,1}+h_n^+\\
\delta\Phi^{+}_n|_{x\le 0}=0
\end{cases}
\end{equation}
and
\begin{equation}\label{5.28}
\begin{cases}
(U^{a,-}_1+V^-_{n+\frac 12, 1})\partial_x(\delta\Phi^-_n)+(U^{a,-}_3+V^-_{n+\frac 12, 3})\partial_z(\delta\Phi^-_n)\\
\hspace{.3in}-\delta\tilde{V}^-_{n,2}+\partial_{x}(\Psi^{a,-}+S_{\theta_n}\Phi^-_n)\delta\tilde{V}^-_{n,1}
+\partial_{z}(\Psi^{a,-}+S_{\theta_n}\Phi_n^-)\delta\tilde{V}^-_{n,3}=
{\mathbb E}g_{n,2}+h_n^-\\
\delta\Phi^{-}_n|_{x\le 0}=0
\end{cases}
\end{equation}
where ${\mathbb E}$ is a proper extension operator from $H^s(\omega_X)$ to $H^{s+{\frac 12}}(\Omega_X)$, and $h^\pm_n$
need to be determined such that $h_n^\pm|_{x\le 0}=h_n^\pm|_{y=0}=0$, and $\delta\Phi^+_n=\delta\Phi^-_n$ on $\{y=0\}$.

To determine $h_n^\pm$, let us study an iteration scheme for the eikonal equation ${\mathcal E}(V^\pm, \Phi^\pm)=0$ given in (\ref{5.12}).

Obviously, we have
\begin{equation}\label{5.29}
{\mathcal E}(V^{\pm}_{n+1}, \Phi^{\pm}_{n+1})-{\mathcal E}(V^{\pm}_n,\Phi^{\pm}_n)=
{\mathcal E}'_{(V^{\pm}_{ n+\frac 12}, S_{\theta_n}\Phi_n^\pm)}(\delta \tilde{V}^{\pm}_n,
\delta \Phi^{\pm}_n)+\overline{e}_{\pm,n}
\end{equation}
where
\begin{equation}\label{5.30}
\begin{array}{ll}
{\mathcal E}'_{(V^{\pm}, \Phi^{\pm})}(W^{\pm},
\Theta^{\pm})=&
(u^{a,\pm}+V_1^\pm)\partial_x\Theta^\pm
+(w^{a,\pm}+V_3^\pm)\partial_z\Theta^\pm\\[2mm]
& -W^\pm_2+\partial_{x}(\Psi^{a,\pm}+\Phi^{\pm})
W_1^{\pm}
+\partial_{z}(\Psi^{a,\pm}+\Phi^{\pm})W_3^{\pm}
\end{array}
\end{equation}
is the linearized operator of $\mathcal E$,
\begin{equation}\label{5.31}
\overline{e}_{\pm,n}=\sum_{j=1}^4\overline{e}_{\pm, n}^{(j)}
\end{equation}
with
\begin{equation}\label{5.32}
\begin{cases}
\overline{e}_{\pm, n}^{(1)}=
{\mathcal E}(V^{\pm}_{n+1}, \Phi^{\pm}_{n+1})-{\mathcal E}(V^{\pm}_n, \Phi^{\pm}_n)
-{\mathcal E}'_{(V^{\pm}_n, \Phi^{\pm}_n)}(\delta {V}^{\pm}_n,
\delta \Phi^{\pm}_n)\\[2mm]
\hspace{.25in}=\partial_x(\delta \Phi^\pm_n)\delta V_{n,1}^\pm+\partial_z(\delta \Phi^\pm_n)\delta V_{n,3}^\pm\\[2mm]
\overline{e}_{\pm,n}^{(2)}={\mathcal E}'_{(V^{\pm}_n, \Phi^{\pm}_n)}\cdot (\delta {V^{\pm}_n},
 \delta {\Phi^{\pm}_n})
-{\mathcal E}'_{(S_{\theta_n}V^{\pm}_n, S_{\theta_n}\Phi^{\pm}_n)}\cdot (\delta {V^{\pm}_n},
 \delta {\Phi^{\pm}_n})\\[2mm]
\hspace{.25in}=\partial_x(\delta \Phi^\pm_n)(1-S_{\theta_n})V^{\pm}_{n,1}+\partial_z(\delta \Phi^\pm_n)(1-S_{\theta_n})V^{\pm}_{n,3}\\[2mm]
\hspace{.35in}+\partial_x((1-S_{\theta_n})\delta \Phi^\pm_n)\delta V^\pm_{n,1}+\partial_z((1-S_{\theta_n})\delta \Phi^\pm_n)\delta V^\pm_{n,3}
\\[2mm]
\overline{e}_{\pm,n}^{(3)}=
{\mathcal E}'_{(S_{\theta_n}V^{\pm}_n, S_{\theta_n}\Phi^{\pm}_n)}\cdot (\delta {V^{\pm}_n},
 \delta {\Phi^{\pm}_n})
-{\mathcal E}'_{(V^{\pm}_{n+\frac 12}, S_{\theta_n}\Phi^{\pm}_n)}\cdot (\delta {V^{\pm}_n},
 \delta {\Phi^{\pm}_n})\\[2mm]
\hspace{.25in}=\partial_x(\delta \Phi^\pm_n)(S_{\theta_n}V^{\pm}_{n,1}-V^\pm_{n+{\frac 12}, 1})+\partial_z(\delta \Phi^\pm_n)(S_{\theta_n}V^{\pm}_{n,3}-V^\pm_{n+{\frac 12}, 3})\\[2mm]
\overline{e}_{\pm,n}^{(4)}=
{\mathcal E}'_{(V^{\pm}_{n+\frac 12}, S_{\theta_n}\Phi^{\pm}_n)}\cdot (\delta {V^{\pm}_n},
 \delta {\Phi^{\pm}_n})
-{\mathcal E}'_{(V^{\pm}_{n+\frac 12}, S_{\theta_n}\Phi^{\pm}_n)}\cdot (\delta {\tilde{V}^{\pm}_n},
 \delta {\Phi^{\pm}_n})\\[2mm]
\hspace{.25in}=\frac{\delta\Phi_n^\pm}{\partial_y(\Psi^{a,\pm}+
S_{\theta_n}\Phi^{\pm}_n)}
\left(
(\Psi^{a,\pm}+S_{\theta_n}\Phi^{\pm}_n)_x(U_1^{a,\pm}+V_{n+{\frac 12},1}^\pm)_y
-(U_2^{a,\pm}+V_{n+{\frac 12},2}^\pm)_y+
(\Psi^{a,\pm}+S_{\theta_n}\Phi^{\pm}_n)_z(U_3^{a,\pm}+V_{n+{\frac 12},3}^\pm)_y
\right).
\end{cases}
\end{equation}

Thus, from \eqref{5.27}, \eqref{5.28}, \eqref{5.29} and ${\mathcal E}(V^{\pm}_0, \Phi^{\pm}_0)=0$ we get
\begin{equation}\label{5.33}
\begin{array}{ll}
{\mathcal E}(V^+_{n+1}, \Phi^+_{n+1}) & =
\sum\limits_{k=0}^n({\mathbb E}g_{k,1}+h_k^++\overline{e}_{+,k})\\[2mm]
&={\mathbb E}\left(({\mathcal B}(U^{a,+}+V^+_{n+1}, U^{a,-}+V^{-}_{n+1}, \psi^a+\phi_{n+1}))_1-\sum\limits_{k=0}^n\tilde{e}_{k,1}\right)+
\sum\limits_{k=0}^n(h_k^++\overline{e}_{+,k})
\end{array}
\end{equation}
by using \eqref{5.23}. Therefore, we define $h_n^+$ through
\begin{equation}\label{5.34}
\sum_{k=0}^nh_k^++S_{\theta_n}\left(\sum_{k=0}^{n-1}(\overline{e}_{+, k}-
{\mathbb E}(\tilde{e}_{k, 1}))\right)=0
\end{equation}
by induction on $k$. Similarly, from the equations of ${\mathcal E}(V_{n+1}^-, \Phi^-_{n+1})$ and $({\mathcal B}(U^{a,+}+V^+_{n+1}, U^{a,-}+V^{-}_{n+1}, \psi^a+\phi_{n+1}))_2$ given in \eqref{5.29} and \eqref{5.23} respectively, we define
$h_n^-$ by
\begin{equation}\label{5.35}
\sum_{k=0}^nh_k^-+S_{\theta_n}\left(\sum_{k=0}^{n-1}(\overline{e}_{-, k}-
{\mathbb E}(\tilde{e}_{k, 2}))\right)=0.
\end{equation}

The steps for determining $(\delta\tilde{V}^{\pm}_n, \delta\Phi^{\pm}_n, \delta\phi_n)$ are to solve
$\delta\tilde{V}^{\pm}_n$ from (\ref{5.19}) first, then to solve $\delta\Phi^{\pm}_n$ from (\ref{5.27})-(\ref{5.28}),
which yields $\delta\phi_n=\delta\Phi^{\pm}_n|_{y=0}$ satisfying (\ref{5.25}) and (\ref{5.26}).

\section{Estimate of approximate solutions and convergence}
\setcounter{equation}{0}

\subsection{Convergence of the iteration scheme}

For fixed $s_0> \frac 52$, $\alpha\ge s_0+5$ and $\alpha+6\le s_1\le 2\alpha-s_0+1$.

Suppose that the first approximate solutions constructed in \S4.1 satisfy
\begin{equation}\label{6.1}
\begin{cases}
\|\dot{U}^{a,\pm}\|_{s_1+2, X}+\|\dot{\Psi}^{a,\pm}\|_{s_1+3, X}
+\|f_a^\pm\|_{s_1+1, X}\le \delta, \\
\|f_a^\pm\|_{\alpha+1, X}/\delta \quad {\rm is~small}, \quad
\|f_a^\pm\|_{\alpha+2, X}/\delta \quad {\rm is~bounded}
\end{cases}
\end{equation}
for a small $\delta>0$, where and hereafter we shall use $\|\cdot\|_{s,X}$ to denote the norm in the space $H^s(\Omega_X)$ for simplicity.

For the iteration scheme (\ref{defn1})(\ref{5.19}), we make the following inductive
assumption
$$\begin{cases}
\|\delta V^{\pm}_k, \delta \Phi^{\pm}_k\|_{s,X}+\|\delta \phi_{k}\|_{H^{s+1}(\omega_X)}
\le\delta \theta_k^{s-\alpha-1}\Delta_k, \quad 0\le k\le n-1, s_0\le s\le s_1\cr
\|{\mathcal L}(V^{\pm}_k, \Phi^{\pm}_k)V^{\pm}_k-f_a^\pm\|_{s,X}
\le \delta \theta_k^{s-\alpha-1}, \quad 0\le k\le n, s_0\le s\le s_1-2\cr
\|{\mathbb B}(V^{+}_k, V^{-}_k, \phi_{k})\|_{H^{s-1}(\omega_X)}
\le \delta \theta_k^{s-\alpha-1}, \quad 0\le k\le n, s_0\le s\le s_1-2
\end{cases}\leqno(H_{n})$$
with $\Delta_k=\theta_{k+1}-\theta_k$.

Temporarily, we suppose the above inductive assumption being true for all $n\ge 1$, then we can conclude the main result, Theorem 2.1 immediately.

\vspace{.1in}
{\bf\sc Proof of Theorem 2.1:}

From ($H_n$) for any $n\ge 0$, we get
\begin{equation}\label{6.2}
\sum_{k\ge 0}\left(\|\delta V^{\pm}_k, \delta\Phi^{\pm}_k\|_{\alpha-1,X}+\|\delta\phi_k\|_{H^{\alpha}(\omega_X)}\right)<+\infty
\end{equation}
which implies that there exist $V^\pm, \Phi^\pm$ in $H^{\alpha-1}(\Omega_X)$ and $\phi$ in $H^{\alpha}(\omega_X)$
such that
\begin{equation}\label{6.3}
\begin{cases}
(V^{\pm}_n, \Phi^{\pm}_n)\longrightarrow (V^{\pm}, \Phi^{\pm})\quad {\rm in}\quad
H^{\alpha-1}(\Omega_X)\cr
\phi_n\longrightarrow \phi\quad {\rm in}\quad
H^{\alpha}(\omega_X)
\end{cases} \quad {\rm as}~ n\to +\infty,
\end{equation}
and $(V^\pm, \Phi^\pm, \phi)$ are solutions to the problem \eqref{5.12}.

Thus, we conclude

\vspace{.1in}
\begin{theorem} 
For any fixed $\alpha> \frac{15}{2}$ and $s_1\ge \alpha +6$. Suppose that $\psi_0\in H^{s_1}(\R)$,
$U_0^\pm-\overline{U}_{r,l}\in H^{s_1-\frac 12}(\R^2_+)$ satisfy the compatibility conditions of the problem
(\ref{2.11}) up to order $s_1-1$, and the conditions \eqref{2.15} and (\ref{6.1}) are satisfied. Then, there exist solutions
$(V^\pm, \Phi^\pm)\in H^{\alpha-1}(\Omega_X)$ and $\phi\in H^{\alpha}(\omega_X)$ to the problem (\ref{5.12}).
\end{theorem}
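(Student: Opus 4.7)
The plan is to reduce Theorem~5.1 to the inductive hypothesis $(H_n)$ stated just above it, and then to close that induction. Once $(H_n)$ is established for every $n\ge 0$, summing the first line of $(H_n)$ at level $s=\alpha-1$ produces the telescoping bound $\sum_k\theta_k^{-2}\Delta_k<\infty$ (using $\theta_k=\sqrt{\theta_0^2+k}$ and $\Delta_k=\theta_{k+1}-\theta_k$), so that $(V_n^\pm,\Phi_n^\pm,\phi_n)$ is Cauchy in $H^{\alpha-1}(\Omega_X)\times H^{\alpha}(\omega_X)$ and converges to some $(V^\pm,\Phi^\pm,\phi)$. The second and third lines of $(H_n)$ force the residuals of \eqref{5.12} to tend to zero, and continuity of $\mathcal{L}$ and $\B$ in $H^{\alpha-1}$ (valid since $\alpha-1>5/2$, which is covered by $\alpha>15/2$) allows me to pass to the limit, yielding a solution of \eqref{5.12}. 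In this way, the convergence statement is almost automatic once the induction is closed, and essentially all of the work is in establishing $(H_n)$.

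For the base case, $V_0^\pm=\Phi_0^\pm=0$ reduces \eqref{5.19} to the linear problem with source $f_0^\pm=S_{\theta_0}f_a^\pm$ and $g_0=0$; the tame estimate of Corollary~\ref{coro4.7} together with the smallness assumption \eqref{6.1} delivers the bounds of $(H_1)$ at $k=0$. For the inductive step, assuming $(H_n)$, I would first construct the modified state $V^\pm_{n+1/2}$ by correcting $S_{\theta_n}V_n^\pm$ so that the eikonal constraint \eqref{eikonal} holds exactly at the linearisation point, and verify it enjoys the same tame bounds as $S_{\theta_n}V_n^\pm$. Corollary~\ref{coro4.7} applied to \eqref{5.19} at this state controls $\delta\tilde V_n^\pm$ and $\delta\phi_n$ in terms of $f_n^\pm,g_n$; the transport problems \eqref{5.27}--\eqref{5.28} with the corrector $h_n^\pm$ then produce $\delta\Phi_n^\pm$, and the good-unknown relation \eqref{5.20} recovers $\delta V_n^\pm$.

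The decisive step is estimating the source terms $f_n^\pm,g_n,h_n^\pm$ defined by \eqref{f-n}, \eqref{g-n} and \eqref{5.34}--\eqref{5.35}. Each of them splits into a telescoping increment $(S_{\theta_n}-S_{\theta_{n-1}})$ applied to $f_a^\pm$ or to the accumulated errors, plus $S_{\theta_n}$ of a one-step error $e_{n-1}^\pm,\tilde e_{n-1},\bar e_{\pm,n-1}$. For the first type, the smoothing bounds \eqref{5.15} deliver exactly the decay $\theta_n^{s-\alpha-1}\Delta_n$ required on the right side of $(H_{n+1})$. For the one-step errors I would classify by origin: the Newton errors $e_n^{\pm,1},\tilde e_n^{(1)},\bar e_{\pm,n}^{(1)}$ are bilinear in the increments, and by Moser product estimates combined with $(H_n)$ decay like $\delta^2\theta_n^{2(s-\alpha-1)+c}\Delta_n^2$ for an explicit constant $c$; the substitution errors $e_n^{\pm,2,3}$ are bounded via $\|(1-S_{\theta_n})V_n^\pm\|_s$ estimates from the second line of \eqref{5.15} paired with $(H_n)$; the good-unknown errors $e_n^{\pm,4},\tilde e_n^{(4)},\bar e_{\pm,n}^{(4)}$ carry an extra $\partial_y$ and are handled as bilinear expressions in $\delta\Phi_n^\pm$ with coefficients controlled by $\partial_yU$ and $\partial_y\Psi$. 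Summing these contributions and exploiting $\theta_n\simeq\Delta_n^{-1}$ reproduces $(H_{n+1})$ under the arithmetic relations $\alpha\ge s_0+5$ and $\alpha+6\le s_1\le 2\alpha-s_0+1$.

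The main obstacle I anticipate is the combinatorial bookkeeping of regularity losses. The tame estimate of Corollary~\ref{coro4.7} loses one derivative on $(f,g)$ and pays an extra two derivatives in the coefficient factor $\|(V,\nabla\widetilde\Psi)\|_{s+2}$; the eikonal corrector $V_{n+1/2}^\pm$ and the extension operator $\mathbb{E}$ appearing in \eqref{5.27}--\eqref{5.28} contribute a further half-derivative; the good-unknown framework \eqref{5.20} costs one more derivative on $\Phi_n^\pm$. All of these losses must simultaneously fit inside the window $s_1-\alpha-1>0$, and the quadratic exponent $2(s-\alpha-1)+c$ produced by the Newton errors must remain strictly below $s-\alpha-1$ for each admissible $s\in[s_0,s_1-2]$. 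Verifying that the parameters fixed by hypothesis (H)---in particular $s>27/2$ together with the stated relations among $s_0,\alpha,s_1$---make every one of these inequalities strict is the technically delicate portion of the argument, and is where I would expect to spend the bulk of the effort.
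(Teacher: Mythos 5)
Your proposal is correct and follows essentially the same route as the paper: deduce convergence in $H^{\alpha-1}(\Omega_X)\times H^\alpha(\omega_X)$ by summing the first line of $(H_n)$ at $s=\alpha-1$, use the residual bounds in the second and third lines of $(H_n)$ to pass to the limit, and close the induction through the modified state $V^\pm_{n+\frac 12}$, Corollary~\ref{coro4.7}, the transport problems \eqref{5.27}--\eqref{5.28}, and the error/source estimates in \eqref{f-n}, \eqref{g-n}, \eqref{5.34}--\eqref{5.35}. The only imprecisions are harmless---e.g.\ the extension operator $\mathbb{E}$ gains (not costs) half a derivative, and the good-unknown relation \eqref{5.20} loses a $y$-derivative on the background state rather than on $\Phi_n^\pm$---and do not affect the structure of the argument.
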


\vspace{.1in}
The remaining main task is to estimate solutions of problems (\ref{5.19})
and (\ref{5.27})-(\ref{5.28}) to verify the inductive assumption ($H_n$) for all $n\ge 1$.

\subsection{Estimates of errors and approximate solutions}

The main step for verifying ($H_{n+1}$) under the assumption of ($H_{n}$) is to estimate errors appeared in the Nash-Moser iteration scheme (\ref{5.19})
and (\ref{5.27})-(\ref{5.28}), we shall mainly fellow the arguments similar to that given in \cite{Chen-Wang, cou2}.

First, from  ($H_{n}$), we immediately have

\begin{lemma}\label{lemma6.1}
The following estimates hold:
\begin{equation}\label{f3.51}
\begin{cases}
\|V^{\pm}_k, \Phi^{\pm}_k\|_{s,X}+\|\phi_{k}\|_{H^{s+1}(\omega_X)}
\le C\delta \theta_k^{(s-\alpha)_+}, \quad s_0\le s\le s_1, ~s\neq \alpha\cr
\|V^{\pm}_k, \Phi^{\pm}_k\|_{\alpha,X}+\|\phi_{k}\|_{H^{\alpha+1}(\omega_X)}
\le C\delta \log\theta_k\cr
\|S_{\theta_k}V^{\pm}_k, S_{\theta_k}\Phi^{\pm}_k\|_{s,X}
+\|S_{\theta_k}\phi_{k}\|_{H^{s+1}(\omega_X)}
\le C\delta \theta_k^{(s-\alpha)_+}, \quad s\ge s_0, s\neq \alpha\cr
\|S_{\theta_k}V^{\pm}_k, S_{\theta_k}\Phi^{\pm}_k\|_{\alpha,X}
+\|S_{\theta_k}\phi_{k}\|_{H^{\alpha+1}(\omega_X)}
\le C\delta \log\theta_k\cr
\|(I-S_{\theta_k})V^{\pm}_k, (I-S_{\theta_k})\Phi^{\pm}_k\|_{s,X}
+\|(I-S_{\theta_k})\phi_{k}\|_{H^{s+1}(\omega_X)}
\le C\delta \theta_k^{s-\alpha}, \quad s_0\le s\le s_1
\end{cases}
\end{equation}
for all $0\le k\le n$.
\end{lemma}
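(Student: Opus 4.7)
The plan is to derive all six estimates by summing the telescoping identity
$$V_k^{\pm}=\sum_{j=0}^{k-1}\delta V_j^{\pm}, \qquad \Phi_k^{\pm}=\sum_{j=0}^{k-1}\delta \Phi_j^{\pm}, \qquad \phi_k=\sum_{j=0}^{k-1}\delta \phi_j$$
(which hold since $V_0^\pm=\Phi_0^\pm=0$), and then invoking the standard smoothing-operator properties \eqref{5.15}-\eqref{5.16}. The only real ingredient beyond bookkeeping is a Riemann-sum estimate: since $\theta_j=\sqrt{\theta_0^2+j}$, one has $\Delta_j=\theta_{j+1}-\theta_j\sim 1/(2\theta_j)$, so for any real exponent $r$,
$$\sum_{j=0}^{k-1}\theta_j^{r-1}\Delta_j\ \sim\ \int_{\theta_0}^{\theta_k}\theta^{r-1}\,d\theta\ \sim\ \begin{cases}C\theta_k^{r},& r>0,\\ C\log\theta_k,& r=0,\\ C,& r<0.\end{cases}$$

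First I would establish the bound on $V_k^{\pm}$, $\Phi_k^{\pm}$, $\phi_k$ themselves. Applying the triangle inequality to the telescoping sum and inserting the increment bound from $(H_n)$ with exponent $r=s-\alpha$, the above Riemann-sum estimate immediately gives
$$\|V_k^{\pm},\Phi_k^{\pm}\|_{s,X}+\|\phi_k\|_{H^{s+1}(\omega_X)}\le C\delta\,\theta_k^{(s-\alpha)_+}\quad(s\neq\alpha),\qquad \le C\delta\log\theta_k\quad(s=\alpha),$$
for $s_0\le s\le s_1$. This yields the first two lines of \eqref{f3.51}.

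Next, for the smoothed quantities $S_{\theta_k}V_k^{\pm}$ etc., I would apply the first estimate in \eqref{5.15}. For $s_0\le s\le s_1$ one simply uses boundedness ($\alpha'=s$) to transfer the bounds already obtained. For $s>s_1$ (where no a priori control on $V_k^{\pm}$ is available), I would take $\alpha'=s_1$, so that \eqref{5.15} gives
$$\|S_{\theta_k}V_k^{\pm}\|_{s,X}\le C\theta_k^{s-s_1}\|V_k^{\pm}\|_{s_1,X}\le C\delta\,\theta_k^{s-s_1}\theta_k^{s_1-\alpha}=C\delta\,\theta_k^{s-\alpha},$$
which matches $\theta_k^{(s-\alpha)_+}$ since $s>s_1\ge\alpha$. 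The case $s=\alpha$ is handled identically with $\log\theta_k$ inherited from the unsmoothed bound. The boundary norm $\|S_{\theta_k}\phi_k\|_{H^{s+1}}$ is controlled analogously (using that \eqref{5.15} applies to the boundary smoothing as well).

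Finally, for the remainders $(I-S_{\theta_k})V_k^{\pm}$, I would use the second inequality in \eqref{5.15}, choosing the largest available index $\alpha'=s_1$. For $s_0\le s\le s_1$,
$$\|(I-S_{\theta_k})V_k^{\pm}\|_{s,X}\le C\theta_k^{s-s_1}\|V_k^{\pm}\|_{s_1,X}\le C\delta\,\theta_k^{s-s_1}\,\theta_k^{s_1-\alpha}=C\delta\,\theta_k^{s-\alpha},$$
and the corresponding estimates for $\Phi_k^{\pm}$ and $\phi_k$ follow from the same argument (using \eqref{5.16} for $\phi_k$). The main thing to watch is the logarithmic gap at $s=\alpha$, where the Riemann-sum behaviour changes regime — this is a genuine but standard feature of Nash–Moser schemes, and it is reflected in the second line of \eqref{f3.51}. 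No genuine obstacle arises; the whole proof is bookkeeping in $s$-indices combined with the dyadic Riemann-sum lemma, and it proceeds uniformly for $V_k^{\pm}$, $\Phi_k^{\pm}$, and $\phi_k$.
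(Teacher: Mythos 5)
Your proof is correct and is exactly the standard argument that the paper implicitly invokes (it states Lemma 6.1 as an immediate consequence of $(H_n)$): telescope $V_k^\pm,\Phi_k^\pm,\phi_k$ over increments, apply $(H_n)$ termwise, and use that $\sum_{j<k}\theta_j^{s-\alpha-1}\Delta_j$ is a Riemann sum for $\int_{\theta_0}^{\theta_k}\theta^{s-\alpha-1}\,d\theta$, yielding $\theta_k^{(s-\alpha)_+}$, $\log\theta_k$, or $O(1)$ according to the sign of $s-\alpha$; the smoothed/remainder lines then follow from \eqref{5.15} (and its boundary analogue on $\omega_X$) with the index choice $\alpha'=s$ for $s\le s_1$ and $\alpha'=s_1$ for $s>s_1$ or for the $(I-S_{\theta_k})$ estimate. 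The only cosmetic slip is the reference to \eqref{5.16} for $\phi_k$; the paper provides a direct boundary smoothing family satisfying \eqref{5.15} on $H^s(\omega_X)$, which is what is actually used, but this does not affect the argument.
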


\begin{lemma}\label{lemma6.2}
 For the quadratic errors $e^{\pm, 1}_k$, $\bar{e}^{(1)}_{\pm,k}$ and
$\tilde{e}^{(1)}_k$ given in \eqref{5.21},  \eqref{5.32} and \eqref{5.23} respectively, we have
\begin{equation}\label{f3.52}
\begin{cases}
\|e^{\pm,1}_k\|_{s,X}\le C\delta^2\theta_k^{L_1(s)}\Delta_k,  \quad s_0-1\le s\le s_1-1\cr
\|\overline{e}_{\pm,k}^{(1)}\|_
{s,X}\le C\delta^2\theta_k^{s+s_0-2\alpha-2}\Delta_k,  \quad s_0\le s\le s_1-1\cr
\|\tilde{e}_{k}^{(1)}\|_{H^s(\omega_X)}\le C\delta^2\theta_k^{s+s_0-2\alpha-\frac 52}\Delta_k,  \quad s_0\le s\le s_1-\frac 12\end{cases}
\end{equation}
for all $k\le n-1$, where
\begin{equation}\label{f3.53}
L_1(s)=\max((s+1-\alpha)_++2(s_0-\alpha-1), s+s_0-2\alpha-2).
\end{equation}
\end{lemma}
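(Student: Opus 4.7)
The plan is to treat each of the three quantities $e^{\pm,1}_k,\,\bar e^{(1)}_{\pm,k},\,\tilde e^{(1)}_k$ as a second-order Taylor remainder of the corresponding smooth nonlinear operator at the state $(U^{a,\pm}+V^\pm_k,\Psi^{a,\pm}+\Phi^\pm_k)$, and to bound it by a tame (Moser) product estimate, using Lemma \ref{lemma6.1} for the background state and the inductive assumption $(H_n)$ for the increments. Throughout I would exploit $s_0>5/2$, which gives $H^{s_0}(\Omega_X)\hookrightarrow L^\infty(\Omega_X)$, and $\Delta_k=\theta_{k+1}-\theta_k\simeq\theta_k^{-1}$, so that any factor $\Delta_k^2$ can be traded for $C\theta_k^{-1}\Delta_k$.

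For $e^{\pm,1}_k$, since $L(U,\Psi)U$ is affine in the first-order derivatives and smooth in $(U,\nabla\Psi)$, Taylor's formula yields
\[
e^{\pm,1}_k=\int_0^1(1-t)\,L''\bigl(U^{a,\pm}+V^\pm_k+t\delta V^\pm_k,\,\Psi^{a,\pm}+\Phi^\pm_k+t\delta\Phi^\pm_k\bigr)\bigl[(\delta V^\pm_k,\delta\Phi^\pm_k)\bigr]^{\otimes 2}\,dt,
\]
a sum of trilinear expressions schematically of the form $F(V^\pm_k+t\delta V^\pm_k,\Phi^\pm_k+t\delta\Phi^\pm_k)\cdot(\delta V^\pm_k,\delta\Phi^\pm_k)\cdot\partial(\delta V^\pm_k,\delta\Phi^\pm_k)$, plus analogous terms in which the derivative falls onto the frozen state. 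I would then distribute derivatives by the tame product rule $\|fg\|_{s,X}\le C(\|f\|_{L^\infty}\|g\|_{s,X}+\|g\|_{L^\infty}\|f\|_{s,X})$ together with Moser's composition estimate for $F$. This produces two competing types of contributions: one where a single increment absorbs the high norm $H^{s+1}$ (contributing $\delta\theta_k^{s-\alpha-1}\Delta_k$) and the other increment is controlled in $L^\infty$ (contributing $\delta\theta_k^{s_0-\alpha-1}\Delta_k$), yielding the exponent $s+s_0-2\alpha-2$ after $\Delta_k^2\le C\theta_k^{-1}\Delta_k$; and one where both increments sit in $L^\infty$ while the background coefficient carries the top norm, contributing $\delta\theta_k^{(s+1-\alpha)_+}$ via Lemma \ref{lemma6.1} and producing the exponent $(s+1-\alpha)_++2(s_0-\alpha-1)$. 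The maximum of the two is exactly $L_1(s)$.

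For $\bar e^{(1)}_{\pm,k}$, the identity in \eqref{5.32} is already an explicit bilinear form in $\delta V^\pm_k$ and $\nabla\delta\Phi^\pm_k$, so the tame product rule applied directly gives
\[
\|\bar e^{(1)}_{\pm,k}\|_{s,X}\le C\bigl(\|\delta\Phi^\pm_k\|_{s+1,X}\|\delta V^\pm_k\|_{L^\infty}+\|\delta V^\pm_k\|_{s,X}\|\nabla\delta\Phi^\pm_k\|_{L^\infty}\bigr),
\]
and substituting the bounds from $(H_n)$ along with $\Delta_k^2\le C\theta_k^{-1}\Delta_k$ produces the claimed exponent $s+s_0-2\alpha-2$. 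The treatment of $\tilde e^{(1)}_k$ is parallel, with Taylor expansion applied to the smooth boundary operator $\mathcal B$ in \eqref{5.23}; the shift from $-2$ to $-5/2$ in the exponent reflects the trace inequality $\|V^\pm_k|_{y=0}\|_{H^{s-1/2}(\omega_X)}\le C\|V^\pm_k\|_{s,X}$ needed to transfer interior Sobolev estimates to the boundary $\omega_X$.

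The main obstacle is the bookkeeping of how Sobolev derivatives are distributed among the three slots of the trilinear quadratic-remainder expressions: it is the two distinct natural distributions (one high-Sobolev increment plus one $L^\infty$ increment, versus two $L^\infty$ increments plus a high-Sobolev background coefficient) that are responsible for the two competing exponents packaged into $L_1(s)$. One must also carefully exploit $\Delta_k\simeq\theta_k^{-1}$ to convert a factor of $\Delta_k$ into a factor of $\theta_k^{-1}$, since without this trade the quadratic errors would fail to decay at the rate needed to close $(H_{n+1})$.
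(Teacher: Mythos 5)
Your proposal follows the paper's own argument essentially verbatim: the same Taylor remainder representation \eqref{f3.54} for $e^{\pm,1}_k$, the same tame bilinear bound on $L''$ (cf. \eqref{f3.56}), and the same accounting of the two competing derivative distributions that produce the two branches of $L_1(s)$, with $\Delta_k^2\le C\theta_k^{-1}\Delta_k$ to trade a factor of $\Delta_k$ for $\theta_k^{-1}$, and the treatment of $\bar e^{(1)}_{\pm,k}$, $\tilde e^{(1)}_k$ as explicit bilinear expressions. One small arithmetic slip: the increment in the $H^{s+1}$ slot contributes $\delta\theta_k^{s-\alpha}\Delta_k$ (from $(H_n)$ with exponent $(s+1)-\alpha-1$), not $\delta\theta_k^{s-\alpha-1}\Delta_k$; with that correction the product is $\delta^2\theta_k^{s+s_0-2\alpha-1}\Delta_k^2\le C\delta^2\theta_k^{s+s_0-2\alpha-2}\Delta_k$, matching the claimed exponent.
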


\begin{proof}
We can get estimates of   $\overline{e}_{\pm,k}^{(1)}$ and $\tilde{e}_{k}^{(1)}$ 
much easier than that of $e^{\pm,1}_k$ by using their explicit expressions and the inductive assumption ($H_{n}$),
so we shall only study $e^{\pm,1}_k$ in detail. Obviously, we have
\begin{equation}\label{f3.54}
e^{\pm,1}_k=\int_0^1(1-\tau)L{''}_{(U^{a,\pm}+V^{\pm}_k+\tau\delta V^{\pm}_k;
\Psi^{a,\pm}+\Phi^{\pm}_k+\tau\delta \Phi^{\pm}_k)}((\delta V^{\pm}_k,\delta \Phi^{\pm}_k),
(\delta V^{\pm}_k,\delta \Phi^{\pm}_k))d\tau
\end{equation}

From (\ref{6.1}), ($H_{n}$) and Lemma \ref{lemma6.1}, we get
\begin{equation}\label{6.24}
\begin{cases}
\|\dot{U}^{a,\pm}+V^{\pm}_k+\tau\delta V^{\pm}_k\|_{s,X}\le C\delta
(1+\theta_k^{(s-\alpha)_+}+\theta_k^{s-\alpha-2}), \qquad \forall s_0\le s\le s_1, \quad s\neq \alpha\\[2mm]
\|\dot{U}^{a,\pm}+V^{\pm}_k+\tau\delta V^{\pm}_k\|_{\alpha,X}\le C\delta
(1+\log \theta_k+\theta_k^{-2})
\end{cases}
\end{equation}
for all $k\le n-1$ and $0\le \tau\le 1$, which implies
\begin{equation}\label{f3.55}
\sup_{0\le \tau\le 1}\|\dot{U}^{a,\pm}+V^{\pm}_k+\tau\delta V^{\pm}_k\|_
{W^{1,\infty}(\Omega_X)}\le C\delta.
\end{equation}

On the other hand, obviously we have
\begin{equation}\label{f3.56}
\begin{array}{l}
\|L''_{(U^\pm, \Psi^\pm)}((V^{\pm}_1, \Phi^{\pm}_1),(V^{\pm}_2, \Phi^{\pm}_2))\|_{s,X}
\le C(\|\dot{U}^\pm, \dot{\Psi}^\pm\|_{s+1,X}\|V^{\pm}_1, \Phi^{\pm}_1\|_{W^{1,\infty}}
\|V^{\pm}_2, \Phi^{\pm}_2\|_{W^{1,\infty}}\\[2mm]
\hspace{.3in}+
\|V^{\pm}_1, \Phi^{\pm}_1\|_{s+1,X}
\|V^{\pm}_2, \Phi^{\pm}_2\|_{W^{1,\infty}}+
\|V^{\pm}_1, \Phi^{\pm}_1\|_{W^{1,\infty}}
\|V^{\pm}_2, \Phi^{\pm}_2\|_{s+1,X}).
\end{array}\end{equation}

Therefore, by using \eqref{6.24}, ($H_{n}$) and Lemma \ref{lemma6.1}, we have
\begin{equation}
\label{6.28}
\begin{array}{ll}
\|e^{\pm, 1}_k\|_{s,X}
& \le C\delta(\delta\theta_k^{s_0-\alpha-1}\Delta_k)^2
(1+\theta_k^{(s+1-\alpha)_+}+\theta_k^{s-\alpha-1})+C\delta^2\theta_k^{s+s_0-
2\alpha-1}
\Delta_k^2\\[2mm]
& \le C\delta^2\theta_k^{L_1(s)}\Delta_k
\end{array}
\end{equation}
as $s_0>\frac 52$, where $L_1(s)=\max((s+1-\alpha)_++2(s_0-\alpha-1), s+s_0-2\alpha-2)$, for all $s_0-1\le s\le s_1-1$ with $s\neq \alpha-1$, and
\begin{equation}
\label{6.29}
\|e^{\pm, 1}_k\|_{\alpha-1,X}
\le C\delta(\delta\theta_k^{s_0-\alpha-1}\Delta_k)^2
(1+\log\theta_k+\theta_k^{-2})+C\delta^2\theta_k^{s_0-\alpha-2}
\Delta_k^2\le C\delta^2\theta_k^{L_1(\alpha-1)}\Delta_k.
\end{equation}

Thus, we conclude the first result given in (\ref{f3.52}).\end{proof}

\begin{lemma}\label{lemma6.3}
For the errors $e^{\pm, 2}_k$, $\bar{e}^{(2)}_{\pm,k}$ and
$\tilde{e}^{(2)}_k$ given in \eqref{5.21},  \eqref{5.32} and \eqref{5.23} respectively, we have
\begin{equation}\label{f3.62}
\begin{cases}
\|e_{k}^{\pm, 2}\|_{s,X}\le C\delta^2\theta_k^{L_2(s)}\Delta_k, \quad s_0-1\le s\le s_1-1\cr
\|\overline{e}_{\pm,k}^{(2)}\|_
{s,X}\le C\delta^2\theta_k^{s+s_0-2\alpha}\Delta_k, \quad s_0\le s\le s_1-1\cr
\|\tilde{e}_{k}^{(2)}\|_{H^s(\omega_X)}\le C\delta^2\theta_k^{s+s_0-2\alpha-\frac 12}\Delta_k, \quad s_0\le s\le s_1-\frac 12
\end{cases}
\end{equation}
for all $k\le n-1$, where
\begin{equation}\label{f3.63}
L_2(s)=\max((s+1-\alpha)_++2(s_0-\alpha),  s+s_0-2\alpha).
\end{equation}
\end{lemma}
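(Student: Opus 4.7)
The strategy is to mirror the proof of Lemma \ref{lemma6.2}, with the Newton-type quadratic factor $(\delta V_k^\pm,\delta\Phi_k^\pm)\cdot(\delta V_k^\pm,\delta\Phi_k^\pm)$ replaced by the ``off-diagonal'' bilinear factor $((I-S_{\theta_k})V_k^\pm,(I-S_{\theta_k})\Phi_k^\pm)\cdot(\delta V_k^\pm,\delta\Phi_k^\pm)$. For $e^{\pm,2}_k$, the fundamental theorem of calculus applied in the coefficients of $L'$ gives
\begin{equation*}
e^{\pm,2}_k=\int_0^1 L''_{\tau,k}\!\left(\bigl((I-S_{\theta_k})V_k^\pm,(I-S_{\theta_k})\Phi_k^\pm\bigr),\bigl(\delta V_k^\pm,\delta\Phi_k^\pm\bigr)\right)d\tau,
\end{equation*}
where $L''_{\tau,k}$ denotes the second derivative of $L$ evaluated at the intermediate state between $(U^{a,\pm}+V_k^\pm,\Psi^{a,\pm}+\Phi_k^\pm)$ and $(U^{a,\pm}+S_{\theta_k}V_k^\pm,\Psi^{a,\pm}+S_{\theta_k}\Phi_k^\pm)$. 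That intermediate state obeys bounds of the same type as \eqref{6.24}, so I may apply the tame bilinear estimate \eqref{f3.56} verbatim.

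Combining the smoothing-residual bound of Lemma \ref{lemma6.1},
\[
\|(I-S_{\theta_k})V_k^\pm,(I-S_{\theta_k})\Phi_k^\pm\|_{s,X}\leq C\delta\theta_k^{s-\alpha},\qquad s_0\leq s\leq s_1,
\]
with the increment bound $\|\delta V_k^\pm,\delta\Phi_k^\pm\|_{s,X}\leq C\delta\theta_k^{s-\alpha-1}\Delta_k$ from $(H_n)$, and the Sobolev embedding $H^{s_0}(\Omega_X)\hookrightarrow W^{1,\infty}(\Omega_X)$ (valid since $s_0>5/2$), the two genuinely bilinear contributions in \eqref{f3.56} produce
\[
\delta\theta_k^{s+1-\alpha}\cdot\delta\theta_k^{s_0-\alpha-1}\Delta_k+\delta\theta_k^{s_0-\alpha}\cdot\delta\theta_k^{s-\alpha-1}\Delta_k\leq C\delta^2\theta_k^{s+s_0-2\alpha}\Delta_k,
\]
while the cubic background contribution gives $\delta^3\theta_k^{(s+1-\alpha)_++2(s_0-\alpha)-1}\Delta_k$. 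Using $\delta\leq 1\leq\theta_k$ to absorb one factor of $\delta$, this is dominated by $\delta^2\theta_k^{(s+1-\alpha)_++2(s_0-\alpha)}\Delta_k$; taking the maximum of the two exponents yields the required bound $C\delta^2\theta_k^{L_2(s)}\Delta_k$.

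The estimate for $\overline{e}^{(2)}_{\pm,k}$ is more direct, since its explicit formula in \eqref{5.32} is a sum of four products, each consisting of an $(I-S_{\theta_k})$ residual paired with a first derivative of an increment. The tame product rule $\|uv\|_{s,X}\leq C(\|u\|_{s,X}\|v\|_{L^\infty}+\|u\|_{L^\infty}\|v\|_{s,X})$ together with Lemma \ref{lemma6.1} and $(H_n)$ gives each summand a contribution of order $\delta^2\theta_k^{s+s_0-2\alpha}\Delta_k$. For $\tilde{e}^{(2)}_k$ the same multilinear structure appears on the boundary $\omega_X$; running the identical argument there, but using the boundary trace inequality (equivalently the boundary smoothing estimate \eqref{5.16}) to pass from $\Omega_X$ to $\omega_X$ costs half a derivative, so $\|(I-S_{\theta_k})V_k^\pm|_{y=0}\|_{H^s(\omega_X)}\leq C\delta\theta_k^{s+1/2-\alpha}$, and the tame product then produces $C\delta^2\theta_k^{s+s_0-2\alpha-1/2}\Delta_k$.

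The main obstacle is the careful bookkeeping of Sobolev indices: I must ensure that every invocation of Lemma \ref{lemma6.1} stays inside the index ranges $s_0\leq s\leq s_1$ and $s+1/2\leq s_1$ on the boundary, and that the dominant contribution (bilinear versus cubic) is correctly identified in the definition \eqref{f3.63} of $L_2(s)$. This is secured by the standing hypotheses $\alpha\geq s_0+5$ and $s_1\leq 2\alpha-s_0+1$ fixed at the beginning of Section~5, which also guarantee that the summability of $\theta_k^{L_2(s)}\Delta_k$ in $k$, needed for propagating $(H_{n+1})$ to all $n$, is preserved.
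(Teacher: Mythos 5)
Your proposal is correct and follows essentially the same route as the paper: write $e^{\pm,2}_k$ as the Taylor remainder integral with the second differential $L''$, observe the intermediate state satisfies the same $W^{1,\infty}$ bound, invoke the tame bilinear estimate \eqref{f3.56}, and then substitute the smoothing-residual bounds of Lemma \ref{lemma6.1} together with the inductive increment bounds from $(H_n)$; the treatment of $\overline{e}^{(2)}_{\pm,k}$ and $\tilde{e}^{(2)}_k$ by the explicit multilinear formulas and a half-derivative trace loss is also the paper's (unwritten) argument. One small bookkeeping slip: in your second bilinear term you wrote $\theta_k^{s-\alpha-1}\Delta_k$ for $\|\delta V_k^\pm,\delta\Phi_k^\pm\|_{s+1,X}$, whereas $(H_n)$ gives $\theta_k^{(s+1)-\alpha-1}\Delta_k=\theta_k^{s-\alpha}\Delta_k$; this only makes your bound slightly too generous in the cheaper direction and the final exponent $s+s_0-2\alpha$ is unaffected.
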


\begin{proof} As in Lemma \ref{lemma6.2}, we shall only study $e^{\pm,2}_{k}$ in detail, the estimate of   $\overline{e}_{\pm,k}^{(2)}$ and $\tilde{e}_{k}^{(2)}$ can be easily obtained  by using  ($H_{n}$).

 From the definition of $e^{\pm,2}_{k}$, obviously we have
\begin{equation}\label{f3.64}
\begin{array}{l}
e^{\pm,2}_{k}=\int_0^1L{''}_{(U^{a,\pm}+S_{\theta_k}V^{\pm}_k+\tau(1-S_{\theta_k})V^{\pm}_k;
\Psi^{a,\pm}+S_{\theta_k}\Phi^{\pm}_k+\tau(1-S_{\theta_k})\Phi^{\pm}_k)}
((\delta V^{\pm}_k,\delta \Phi^{\pm}_k),((1-S_{\theta_k})V^{\pm}_k,(1-S_{\theta_k})\Phi^{\pm}_k))d\tau
\end{array}
\end{equation}

As in (\ref{f3.55}), from the assumption ($H_{n}$) we have
\begin{equation}\label{f3.65}
\sup_{0\le \tau\le 1}\left(
\|\dot{U}^{a,\pm}+S_{\theta_k}V^{\pm}_k+\tau(1-S_{\theta_k})V^{\pm}_k\|_
{W^{1,\infty}(\Omega_T)}+\|\dot{\Psi}^{a,\pm}+S_{\theta_k}\Phi^{\pm}_k+\tau(1-S_{\theta_k})\Phi^{\pm}_k\|_
{W^{1,\infty}(\Omega_T)}\right)\le C\delta.
\end{equation}

Therefore, by using (\ref{f3.56}) in
(\ref{f3.64}) we obtain
\begin{equation}\label{f3.67}
\begin{array}{l}
\|e^{\pm,2}_k\|_{s,X}
\le C\left(\|\delta V^{\pm}_k,\delta \Phi^{\pm}_k\|_{W^{1,\infty}}
\|(1-S_{\theta_k})( V^{\pm}_k,\Phi^{\pm}_k)\|_{W^{1,\infty}}
(\|\dot{U}^{a,\pm}, \dot{\Psi}^{a,\pm}\|_{s+1, X}\right.\\[2mm]
\hspace{.7in}
+\|S_{\theta_k}(V^{\pm}_k,\Phi^{\pm}_k)\|_{s+1,X}+
\|(1-S_{\theta_k})(V^{\pm}_k,\Phi^{\pm}_k)\|_{s+1,X})\\[2mm]
\hspace{.7in}
+\|\delta V^{\pm}_k,\delta \Phi^{\pm}_k\|_{s+1,X}
\|(1-S_{\theta_k})(V^{\pm}_k,\Phi^{\pm}_k)\|_{W^{1,\infty}}\\[2mm]
\hspace{.7in}
\left.+\|\delta V^{\pm}_k,\delta \Phi^{\pm}_k\|_{W^{1,\infty}}
\|(1-S_{\theta_k})(V^{\pm}_k,\Phi^{\pm}_k)\|_{s+1, X}\right)
\end{array}
\end{equation}

By using the properties of smoothing operators, the assumption ($H_{n}$) and Lemma \ref{lemma6.1} in
(\ref{f3.67}) we conclude the first estimate given in (\ref{f3.62}) when $s_0-1\le s\le s_1-1$.
\end{proof}

\vspace{.1in}
To estimate the error $e^{\pm, 3}_k$, let us define the modified state $V^{\pm}_{n+\frac 12}$
first, this will be done in an idea similar to that given in \cite{cou2, Chen-Wang, tra1}.

To guarantee that the boundary $\{y=0\}$ is uniformly characteristic at each step iteration (\ref{5.19}),
we require that
\begin{equation}\label{f3.72}
({\mathbb B}(V^{+}_{n+\frac 12},V^{-}_{n+\frac 12},
S_{\theta_n}\phi_{n}))_i^\pm=0, \quad\text{on}~ \{y=0\}
\end{equation}
for $i=1,2$ and all $n\in \N$, which leads to define
\begin{equation}\label{f3.73}
V^{\pm}_{n+\frac 12,j}=S_{\theta_n}V^{\pm}_{n,j}, \quad j\in\{1, 3\}
\end{equation}
and
\begin{equation}\label{f3.74}
\begin{array}{ll}
V^{\pm}_{n+\frac 12, 2}=&
\partial_{x}(\Psi^{a,\pm}+S_{\theta_n}\Phi^{\pm}_n)V^{\pm}_{n+\frac 12,1}
+\partial_{z}(\Psi^{a,\pm}+S_{\theta_n}\Phi^{\pm}_n)V^{\pm}_{n+\frac 12,3}\\[2mm]
&
+u^{a,\pm}\partial_{x}(S_{\theta_n}\Phi^{\pm}_n)+w^{a,\pm}\partial_{z}(S_{\theta_n}\Phi^{\pm}_n)
\end{array}
\end{equation}

\begin{lemma}\label{lemma6.4}
For the modified state $V^{\pm}_{n+\frac 12}$ defined at above, we have
\begin{equation}\label{f3.75}
\|V^{\pm}_{n+\frac 12}-S_{\theta_k}V^{\pm}_{n}\|_{s, X}\le C\delta \theta_n^{s+1-\alpha}
\end{equation}
for any $s_0\le s\le s_1+3$.\end{lemma}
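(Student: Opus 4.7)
The plan is to exploit that $V^{\pm}_{n+\frac12}$ differs from $S_{\theta_n}V^{\pm}_{n}$ only in its second component, and then to identify that second-component difference as a sum of (i) the smoothed eikonal residual $S_{\theta_n}\mathcal E(V^{\pm}_n,\Phi^{\pm}_n)$, (ii) commutators between $S_{\theta_n}$ and multiplication by $(\dot\Psi^{a,\pm},u^{a,\pm},w^{a,\pm})$ and their derivatives, and (iii) bilinear ``non-commutation'' defects of the form $(\partial S_{\theta_n}\Phi^{\pm}_n)(S_{\theta_n}V^{\pm}_{n,j})-S_{\theta_n}\bigl((\partial \Phi^{\pm}_n) V^{\pm}_{n,j}\bigr)$.

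First I would note that by \eqref{f3.73} the first and third components of $V^{\pm}_{n+\frac12}-S_{\theta_n}V^{\pm}_n$ vanish identically, so only the second component remains. Using the defining formula \eqref{f3.74} and the identity obtained from the eikonal operator \eqref{5.30}, namely
\[
V^{\pm}_{n,2}=\partial_x(\Psi^{a,\pm}+\Phi^{\pm}_n)V^{\pm}_{n,1}+\partial_z(\Psi^{a,\pm}+\Phi^{\pm}_n)V^{\pm}_{n,3}+u^{a,\pm}\partial_x\Phi^{\pm}_n+w^{a,\pm}\partial_z\Phi^{\pm}_n-\mathcal E(V^{\pm}_n,\Phi^{\pm}_n),
\]
I would subtract $S_{\theta_n}$ of this identity from \eqref{f3.74} and regroup to obtain a finite sum of terms of types (i)–(iii) above.

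For type (i), the residual $\mathcal E(V^{\pm}_n,\Phi^{\pm}_n)$ is, by construction of the correctors $h^\pm_k$ in \eqref{5.34}--\eqref{5.35} and identity \eqref{5.33}, a sum of the accumulated errors $\bar e^{(j)}_{\pm,k}$ and $\mathbb E(\tilde e_{k,j})$ up to the tail $(I-S_{\theta_n})\sum_{k<n}(\cdot)$, and by Lemmas \ref{lemma6.2}--\ref{lemma6.3} these errors are $O(\delta^2\theta_k^{s+s_0-2\alpha})\Delta_k$, hence telescope to a quantity of the required order $\delta\theta_n^{s+1-\alpha}$ after applying the smoothing bound from \eqref{5.15}. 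For type (ii), commutator estimates of the form $\|[S_{\theta_n},a]b\|_{s,X}\le C\|a\|_{W^{1,\infty}}\|b\|_{s-1,X}+C\|a\|_{s,X}\|b\|_{L^\infty}$, combined with the inductive assumption $(H_n)$ and Lemma \ref{lemma6.1}, give the desired gain of one derivative. For type (iii), I would use the ``tame'' bilinear smoothing inequality
\[
\|fg-S_{\theta_n}(fg)-(\partial S_{\theta_n}f\cdot S_{\theta_n}g - S_{\theta_n}(\partial f\cdot g))\ldots\|_{s,X}\le C\theta_n^{s+1-\alpha}\|f\|_{\alpha}\|g\|_{\alpha},
\]
applied to $f=\Phi^{\pm}_n$ (or $\dot\Psi^{a,\pm}$) and $g=V^{\pm}_{n,j}$; again via $(H_n)$ and Lemma \ref{lemma6.1} this yields $O(\delta\theta_n^{s+1-\alpha})$ for $s_0\le s\le s_1+3$.

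The main obstacle will be the bookkeeping in type (iii): the factors of $\partial_x$ and $\partial_z$ outside the smoothing operators in \eqref{f3.74} cost one derivative, and one must verify that the combination of smoothing/product losses still produces exactly the exponent $s+1-\alpha$ rather than $s+2-\alpha$. This requires splitting each bilinear defect as $(f-S_{\theta_n}f)\cdot S_{\theta_n}g + S_{\theta_n}f\cdot(g-S_{\theta_n}g) + S_{\theta_n}(f g)-fg$ and absorbing the extra derivative using the second line of \eqref{5.15} against the background $H^\alpha$ bound of the iterates. The range $s\le s_1+3$ (two more than in $(H_n)$) is accommodated because both factors in the bilinear terms are either smoothed (so they lie in every $H^s$) or background approximate solutions controlled up to order $s_1+3$ by \eqref{6.1}.
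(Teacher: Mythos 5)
Your decomposition of the second-component difference into the smoothed eikonal residual $S_{\theta_n}\mathcal{E}(V^\pm_n,\Phi^\pm_n)$ plus commutator and bilinear-smoothing defects is exactly the argument from Coulombel--Secchi, \S7.4, that the paper cites and defers to. In particular, your use of the accumulated-error identity \eqref{5.33} together with the construction of the correctors $h^\pm_k$ in \eqref{5.34}--\eqref{5.35} to bound $\mathcal{E}(V^\pm_n,\Phi^\pm_n)$ is the correct mechanism: a naive telescoping of the Newton increments $\delta V^\pm_k$ alone would yield only $O(\delta)$, not the required $O(\delta\theta_n^{\,s_0-\alpha-1})$ decay at the base regularity, so the corrector identity is essential before applying the smoothing bounds \eqref{5.15}.
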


We can prove this lemma in the same way as given in \cite[\S7.4]{cou2}, so we omit it here.

\vspace{.1in} From the definition of the intermediate state $V_{n+\frac 12}^\pm$ given in \eqref{f3.73}-\eqref{f3.74}, we know
\begin{equation}\label{6.46}
\overline{e}^{(3)}_{\pm,n}=\tilde{e}^{(3)}_n\equiv 0
\end{equation}
for these two errors given in \eqref{5.32} and \eqref{5.23} respectively. The representation of the error $e^{\pm, 3}_k$ given in \eqref{5.21}
is similar to that of
$e^{\pm, 2}_k$, so by using Lemma \ref{lemma6.4} and the same argument as the proof of Lemma \ref{lemma6.3}, we conclude

\begin{lemma}\label{lemma6.5}
 For the error $e^{\pm, 3}_k$, we have
\begin{equation}\label{f3.83}
\|e^{\pm, 3}_k\|_{s,X}\le C\delta^2\theta_k^{L_3(s)}\Delta_k, \quad s_0-1\le s\le s_1-1
\end{equation}
for all $k\le n-1$, where
\begin{equation}\label{f3.84}
L_3(s)=\max((s+1-\alpha)_++2(s_0-\alpha), s+2s_0-3\alpha+2, s+s_0-2\alpha+1, 2(s_0-\alpha)+1 ).
\end{equation}\end{lemma}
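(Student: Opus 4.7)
The strategy mirrors that of Lemma \ref{lemma6.3}, exploiting the fact that $e^{\pm,3}_k$ has the same bilinear structure as $e^{\pm,2}_k$ with the smoothing error $(I-S_{\theta_k})V^\pm_k$ replaced by the modification error $V^\pm_{k+\frac12}-S_{\theta_k}V^\pm_k$. I would first write
\[
e^{\pm,3}_k=\int_0^1 L''_{(U^{a,\pm}+S_{\theta_k}V^\pm_k+\tau(V^\pm_{k+\frac12}-S_{\theta_k}V^\pm_k),\,\Psi^{a,\pm}+S_{\theta_k}\Phi^\pm_k)}\bigl((\delta V^\pm_k,\delta\Phi^\pm_k),\,(V^\pm_{k+\frac12}-S_{\theta_k}V^\pm_k,0)\bigr)\,d\tau,
\]
using that $L'$ is affine in its $U$-background and that the $\Psi$-background is unchanged in this particular substitution. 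By \eqref{f3.73}--\eqref{f3.74}, only the second component of $V^\pm_{k+\frac12}-S_{\theta_k}V^\pm_k$ is nonzero, but this does not materially simplify the Sobolev bookkeeping that follows.

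Next I would apply the bilinear tame estimate \eqref{f3.56} to the integrand, after first verifying the uniform $W^{1,\infty}$-smallness
\[
\sup_{0\le\tau\le 1}\|\dot U^{a,\pm}+S_{\theta_k}V^\pm_k+\tau(V^\pm_{k+\frac12}-S_{\theta_k}V^\pm_k)\|_{W^{1,\infty}(\Omega_X)}\le C\delta,
\]
which follows from \eqref{6.1}, Lemmas \ref{lemma6.1} and \ref{lemma6.4}, and Sobolev embedding (here $s_0>\tfrac{5}{2}$ is used). The factor $(\delta V^\pm_k,\delta\Phi^\pm_k)$ is then controlled by the inductive hypothesis $(H_n)$, while the modification error is bounded by Lemma \ref{lemma6.4}. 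The essential point is that Lemma \ref{lemma6.4} yields $\|V^\pm_{k+\frac12}-S_{\theta_k}V^\pm_k\|_{s,X}\le C\delta\theta_k^{s+1-\alpha}$, one power of $\theta_k$ worse than the smoothing bound $\|(I-S_{\theta_k})V^\pm_k\|_{s,X}\le C\delta\theta_k^{s-\alpha}$ that drove Lemma \ref{lemma6.3}; this single lost order is exactly what accounts for the increased exponents in $L_3(s)$ relative to $L_2(s)$.

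Finally, I would read off $L_3(s)$ by enumerating the ways of distributing the $H^{s+1}$- and $W^{1,\infty}$-norms in \eqref{f3.56} across the three slots, as in Lemma \ref{lemma6.3}: the term $(s+1-\alpha)_++2(s_0-\alpha)$ arises from placing $H^{s+1}$ on the coefficient $(\dot U^{a,\pm},\dot\Psi^{a,\pm})$; $s+s_0-2\alpha+1$ from placing it on $\delta V^\pm_k$ (whose $H^{s+1}$-norm is $\delta\theta_k^{s+1-\alpha-1}\Delta_k$ by $(H_n)$, paired with $W^{1,\infty}$-control of the modification error at cost $\theta_k^{s_0+1-\alpha}$); $s+2s_0-3\alpha+2$ from placing it on the modification error itself; and the constant competitor $2(s_0-\alpha)+1$ from the pure $W^{1,\infty}$ regime that dominates at small $s$. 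The main (purely bookkeeping) obstacle is to verify that none of these four terms can be improved and that the borderline value $s=\alpha-1$ is treated by the logarithmic substitution analogous to \eqref{6.29}; once this is carried out, \eqref{f3.83} follows on the stated range $s_0-1\le s\le s_1-1$ for all $k\le n-1$.
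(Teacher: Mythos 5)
Your proposal follows the same route as the paper, which itself disposes of Lemma~\ref{lemma6.5} in one line by noting that $e^{\pm,3}_k$ has the same bilinear structure as $e^{\pm,2}_k$ and invoking Lemma~\ref{lemma6.4} in place of the smoothing estimate for $(I-S_{\theta_k})V^\pm_k$. You correctly identify the key points: the $\Psi$-slot does not change in this substitution (so the second argument of $L''$ is $(V^\pm_{k+\frac12}-S_{\theta_k}V^\pm_k,\,0)$), the uniform $W^{1,\infty}$-smallness of the backgrounds follows from \eqref{6.1}, Lemmas~\ref{lemma6.1}, \ref{lemma6.4} and Sobolev embedding, and the single lost power of $\theta_k$ in $L_3(s)$ relative to $L_2(s)$ traces back exactly to the estimate $\|V^\pm_{k+\frac12}-S_{\theta_k}V^\pm_k\|_{s,X}\le C\delta\theta_k^{s+1-\alpha}$ of Lemma~\ref{lemma6.4}. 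One small inaccuracy: the parenthetical claim that ``$L'$ is affine in its $U$-background'' is not correct, since the matrices $A_i(U)$ depend nonlinearly on $U$; but this is harmless, because the integral representation via $L''$ uses only the fundamental theorem of calculus, not affineness, and the rest of the argument stands as written.
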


\begin{lemma}\label{lemma6.6}
For the errors $e^{\pm, 4}_k$, $\bar{e}^{(4)}_{\pm,k}$ and
$\tilde{e}^{(4)}_k$ given in \eqref{5.21},  \eqref{5.32} and \eqref{5.23} respectively, , we have
\begin{equation}\label{f3.85}
\begin{cases}
\|e^{\pm,4}_k\|_{s,X}\le C\delta^2\theta_k^{L_4(s)}\Delta_k, \quad s_0\le s\le s_1-2\cr
\|\overline{e}_{\pm,k}^{(4)}\|_
{s,X}\le C\delta^2\theta_k^{L_5(s-\frac 12)}\Delta_k, \quad s_0+\frac 32\le s\le s_1-\frac 72\cr
\|\tilde{e}_{k}^{(4)}\|_{H^s(\omega_X)}\le C\delta^2\theta_k^{L_5(s)}\Delta_k, \quad  s_0+1\le s\le s_1-4
\end{cases}
\end{equation}
for all $k\le n-1$, where
\begin{equation}\label{f3.86}
\begin{cases}
L_4(s)=\max((s+1-\alpha)_++2(s_0-\alpha+1), s+s_0+2-2\alpha),\cr
L_5(s)=\max((s+2-\alpha)_++2(s_0-\alpha)-1, s+s_0-2\alpha).
\end{cases}.
\end{equation}\end{lemma}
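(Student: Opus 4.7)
The plan is to estimate the three errors in the same spirit as Lemmas \ref{lemma6.2}, \ref{lemma6.3} and \ref{lemma6.5}, by writing each one explicitly and then applying the tame Moser-type product estimate
\[
\|fg\|_{s,X}\le C\bigl(\|f\|_{L^\infty(\Omega_X)}\|g\|_{s,X}+\|g\|_{L^\infty(\Omega_X)}\|f\|_{s,X}\bigr),
\]
together with the inductive assumption $(H_n)$, the smoothed-norm bounds of Lemma \ref{lemma6.1}, and the modified-state estimate of Lemma \ref{lemma6.4}. Each of $e^{\pm,4}_k$, $\overline{e}^{(4)}_{\pm,k}$ and $\tilde{e}^{(4)}_k$ factors as $\delta\Phi^\pm_k$ (respectively $\delta\phi_k$) multiplied by a quantity that is built from $U^{a,\pm}+V^\pm_{k+1/2}$, $\Psi^{a,\pm}+S_{\theta_k}\Phi^\pm_k$ and their first derivatives. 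Since $\delta\Phi^\pm_k$ is quantitatively small by $(H_n)$, each product is quadratic in the small increments, which is the reason the powers of $\theta_k$ that appear all have the form $(\cdot)+2(s_0-\alpha)+O(1)$, parallel to the exponents $L_1,L_2,L_3$ in the earlier lemmas.

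For $e^{\pm,4}_k$, I would isolate the factor $\mathcal{Q}^\pm_k:=\partial_y\bigl[L(U^{a,\pm}+V^\pm_{k+1/2},\Psi^{a,\pm}+S_{\theta_k}\Phi^\pm_k)(U^{a,\pm}+V^\pm_{k+1/2})\bigr]$. Using the Moser inequality, the uniform lower bound $|\partial_y(\Psi^{a,\pm}+S_{\theta_k}\Phi^\pm_k)|\ge c_0>0$ (valid by $(H_n)$ once $\delta$ is small), and the composition estimate \eqref{ff}, $\mathcal{Q}^\pm_k$ is controlled by the $H^{s+2}$-norms of $V^\pm_{k+1/2}$ and $S_{\theta_k}\Phi^\pm_k$. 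Lemma \ref{lemma6.4} bounds $V^\pm_{k+1/2}-S_{\theta_k}V^\pm_k$, so Lemma \ref{lemma6.1} then gives $\|\mathcal{Q}^\pm_k\|_{s,X}\le C\delta\,\theta_k^{(s+2-\alpha)_+}$ and $\|\mathcal{Q}^\pm_k\|_{L^\infty}\le C\delta\,\theta_k^{s_0+2-\alpha}$. Multiplying by the $\delta\Phi^\pm_k$ factor, whose $H^s$-norm is $\le \delta\theta_k^{s-\alpha-1}\Delta_k$ and whose $L^\infty$-norm is $\le \delta\theta_k^{s_0-\alpha-1}\Delta_k$, and taking the maximum of the two terms in the product estimate, one gets precisely $C\delta^2\theta_k^{L_4(s)}\Delta_k$ with $L_4(s)=\max\bigl((s+1-\alpha)_++2(s_0-\alpha+1),\,s+s_0+2-2\alpha\bigr)$.

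The treatment of $\overline{e}^{(4)}_{\pm,k}$ is entirely analogous, with the difference that the inner factor (built from the eikonal operator) involves one fewer derivative of the background, which accounts for the shift from $L_4$ to $L_5$ and for the $s+\tfrac12$ loss when we take traces on $\{y=0\}$ for $\tilde{e}^{(4)}_k$. The boundary error $\tilde{e}^{(4)}_k$ is estimated in $H^s(\omega_X)$ by applying the trace theorem to the analogous volume expression and then the same Moser product estimate, which explains the range $s_0+1\le s\le s_1-4$ and the exponent $L_5(s)$. Throughout, the restriction $s\le s_1-2$ (respectively $s_1-\tfrac72$, $s_1-4$) comes from requiring that each of the high-order factors $V^\pm_{k+1/2}$, $S_{\theta_k}\Phi^\pm_k$ lies in a Sobolev space no higher than $H^{s_1+3}$, which is exactly what $(H_n)$ together with Lemma \ref{lemma6.4} affords. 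The main obstacle, as in the previous three lemmas, is not any single inequality but the bookkeeping needed to check that in every range of $s$ the dominant exponent in the product really is the one written in $L_4$ and $L_5$; this is a case analysis on whether $(s+1-\alpha)$ and $(s+2-\alpha)$ are positive or negative, identical to the case analysis performed for $L_1$ and $L_3$.
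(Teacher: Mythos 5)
Your plan to factor each error as $\delta\Phi^\pm_k$ (or $\delta\phi_k$) times a remaining factor, and then apply the Moser product estimate, matches the overall shape of the paper's argument. However, there is a genuine gap in how you bound the remaining factor $\mathcal{Q}^\pm_k=R^\pm_k=\partial_y[L(U^{a,\pm}+V^\pm_{k+1/2},\Psi^{a,\pm}+S_{\theta_k}\Phi^\pm_k)(U^{a,\pm}+V^\pm_{k+1/2})]$. You claim $\|\mathcal{Q}^\pm_k\|_{L^\infty}\le C\delta\,\theta_k^{s_0+2-\alpha}$ by a direct Moser/composition estimate plus Lemmas \ref{lemma6.1} and \ref{lemma6.4}. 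But from the $H^s$-bound you state, $\|\mathcal{Q}^\pm_k\|_{s,X}\le C\delta\,\theta_k^{(s+2-\alpha)_+}$, evaluating at $s=s_0$ (where $s_0+2-\alpha<0$ because $\alpha\ge s_0+5$) gives only $\|\mathcal{Q}^\pm_k\|_{L^\infty}\le C\delta$ with \emph{no} decaying power of $\theta_k$. Your stated $L^\infty$-bound has inexplicably dropped the positive part, and cannot be derived from the product/composition estimates alone: $L(U^{a,\pm}+V^\pm_{k+1/2},\Psi^{a,\pm}+S_{\theta_k}\Phi^\pm_k)(U^{a,\pm}+V^\pm_{k+1/2})$ is a nonlinear expression in the background plus smoothed perturbation, and there is no smallness beyond $O(\delta)$ visible to a naive Moser estimate on the full product.

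The decay you need is real, but it comes from a different mechanism. The paper splits
\[
L(U^{a,\pm}+V^\pm_{k+1/2},\cdot)(\cdot)
=\bigl[L(U^{a,\pm}+V^\pm_{k+1/2},\cdot)(\cdot)-L(U^{a,\pm}+V^\pm_k,\Psi^{a,\pm}+\Phi^\pm_k)(U^{a,\pm}+V^\pm_k)\bigr]
+\bigl[\mathcal{L}(V^\pm_k,\Phi^\pm_k)V^\pm_k-f_a^\pm\bigr]+f_a^\pm,
\]
and then uses the \emph{second line of the inductive assumption} $(H_n)$ to control the middle bracket by $\delta\,\theta_k^{s-\alpha-1}$, and Lemmas \ref{lemma6.1} and \ref{lemma6.4} to control the first bracket (a difference of residuals at nearby states, hence quadratic). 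This yields the decaying bound $\|R^\pm_k\|_{s_0,X}\lesssim\delta\,\theta_k^{s_0-\alpha}$, which is essential. If you replace it by the non-decaying $\|R^\pm_k\|_{L^\infty}\le C\delta$ that a direct Moser estimate provides, the final product estimate only gives $\|e^{\pm,4}_k\|_{s,X}\lesssim\delta^2\theta_k^{s-\alpha-1}\Delta_k$, and since $s-\alpha-1>L_4(s)$ whenever $\alpha\ge s_0+5$, this is strictly weaker than the claimed bound and the induction would not close. The same residual-subtraction device (using the third line of $(H_n)$ on $\mathbb{B}(V^+_k,V^-_k,\phi_k)$) is needed for $R^b_k=\mathcal{B}(U^{a,\pm}+V^\pm_{k+1/2},\psi^a+S_{\theta_k}\phi_k)$ when estimating $\tilde{e}^{(4)}_k$ and $\overline{e}^{(4)}_{\pm,k}$; labeling this step ``entirely analogous'' hides the fact that it is exactly the part of the proof your sketch is missing.
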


\begin{proof} (1) Denote by
\begin{equation}\label{f3.87}
R^\pm_k=\partial_y(L(U^{a,\pm}+V^{\pm}_{k+\frac 12},\Psi^{a,\pm}+S_{\theta_k}\Phi^{\pm}_{k})(U^{a,\pm}+V^{\pm}_{k+\frac 12})).
\end{equation}

Obviously, we have
$$\begin{array}{ll}
\|R^\pm_k\|_{s,X}\le &
\|L(U^{a,\pm}+V^{\pm}_{k+\frac 12},\Psi^{a,\pm}+S_{\theta_k}\Phi^{\pm}_k)(U^{a,\pm}+V^{\pm}_{k+\frac 12})\\[2mm]
&-
L(U^{a,\pm}+V^{\pm}_k,\Psi^{a,\pm}+\Phi^{\pm}_k)(U^{a,\pm}+V^{\pm}_k)\|_{s+1,X}\\[2mm]
&+
\|{\mathcal L}(V^{\pm}_k,\Phi^{\pm}_k)V^{\pm}_k-f_a^\pm\|_{s+1,X}
\end{array}
$$
which implies
\begin{equation}\label{f3.88}
\begin{array}{lll}
\|R^\pm_k\|_{s,X} &\le &
C\left(\|V^{\pm}_{k+\frac 12}-V^{\pm}_k\|_{W^{1,\infty}}
(\|\dot{U}^{a,\pm}+V^{\pm}_k\|_{s+1, X}+\|\dot{\Psi}^{a,\pm}+\Phi^{\pm}_k\|_{s+2, X})\right.\\[2mm]
& &+
\|V^{\pm}_{k+\frac 12}-V^{\pm}_k\|_{s+2, X}
(\|\dot{U}^{a,\pm}+V^{\pm}_k\|_{L^\infty}+\|\dot{\Psi}^{a,\pm}+\Phi^{\pm}_k\|_{W^{1,\infty}})\\[2mm]
& &+\|\dot{U}^{a,\pm}+V^{\pm}_{k+\frac 12}\|_{W^{1,\infty}}(
\|V^{\pm}_{k+\frac 12}-V^{\pm}_k\|_{s+1,X}+\|(1-S_{\theta_k})\Phi^\pm_k\|_{s+2,X})\\[2mm]
& &\left.+\|\dot{U}^{a,\pm}+V^{\pm}_{k+\frac 12}\|_{s+2,X}(
\|V^{\pm}_{k+\frac 12}-V^{\pm}_k\|_{L^\infty}+\|(1-S_{\theta_k})\Phi^\pm_k\|_{W^{1,\infty}})\right)\\[2mm]
& &+\|{\mathcal L}(V^{\pm}_k,\Phi^{\pm}_k)V^{\pm}_k-f_a^\pm\|_{s+1,X}\\[2mm]
\hspace{.3in}
& \le & C\delta^2(\theta_k^{(s+2-\alpha)_++s_0+1-\alpha}+\theta_k^{s+3-\alpha})
+2\delta \theta_k^{s-\alpha}
\end{array}
\end{equation}
for all $s_0\le  s\le s_1-3$. 

As in \cite{cou2}, as $s=s_1-2$, we immediately have
\begin{equation}\label{f3.88-1}
\|R^\pm_k\|_{s,X} \le \| L(U^{a,\pm}+V^{\pm}_{k+\frac 12},\Psi^{a,\pm}+S_{\theta_k}\Phi^{\pm}_k)(U^{a,\pm}+V^{\pm}_{k+\frac 12})\|_{s+1, X}\le C\delta \theta_k^{s+3-\alpha}.
\end{equation}

Thus, we get that
$$e^{\pm, 4}_{k}=\frac{R_k^\pm \delta\Phi^{\pm}_k}{\partial_{y}(\Psi^{a,\pm}+S_{\theta_k}\Phi^{\pm}_k)}$$
satisfy
\begin{equation}\label{f3.89}
\|e^{\pm,4}_k\|_{s,X}\le
C(\|R^\pm_k\|_{s_0,X}(\delta\theta_k^{s-1-\alpha}\Delta_k+\delta\theta_k^{s_0-1-\alpha}\Delta_k
(\delta+\delta\theta_k^{(s+1-\alpha)_+}))+\delta\theta_k^{s_0-1-\alpha}\Delta_k\|R_k^\pm\|_{s,X}),
\end{equation}
which yields the first estimate given in (\ref{f3.85}) for any $s_0\le s\le s_1-2$ by using
(\ref{f3.88}) and (\ref{f3.88-1}).

(2) Denote by
\begin{equation}\label{f3.90}
R^b_k={\mathcal B}(U^{a,\pm}+V^{\pm}_{k+\frac 12},\psi^a+S_{\theta_k}\phi_{k}).
\end{equation}
Obviously, we have
$$\begin{array}{ll}
\|R^b_k\|_{H^s(\omega_X)}\le
& \|{\mathcal B}(U^{a,\pm}+V^{\pm}_{k+\frac 12},\psi^a+S_{\theta_k}\phi_{k})-{\mathcal B}(U^{a,\pm}+V^{\pm}_k,\psi^a+\phi_{k})\|_{H^s(\omega_X)}\\[2mm]
&
+\|{\mathbb B}(V^{\pm}_k,\phi_{k})\|_{H^s(\omega_X)},
\end{array}
$$
which implies
\begin{equation}\label{f3.91}
\begin{array}{lll}
\|(R^b_k)_1\|_{H^s(\omega_X)}& \le &
C\left(
\|(S_{\theta_k}-1)\phi_{k}\|_{H^{s+1}(\omega_X)}\|\dot{U}^{a,+}+V_k^+\|_{L^\infty}
+\|(S_{\theta_k}-1)\phi_{k}\|_{W^{1,\infty}}\|\dot{U}^{a,+}+V_k^+\|_{H^{s}(\omega_X)}\right.\\[2mm]
& &
\left.+\|(S_{\theta_k}-1)V^+_{k}\|_{H^{s}(\omega_X)}\|\dot{\psi}^{a,+}+S_{\theta_k}\phi_k\|_{W^{1,\infty}}
+\|(S_{\theta_k}-1)V^+_{k}\|_{L^\infty}\|\dot{\psi}^{a,+}+S_{\theta_k}\phi_k\|_{H^{s+1}(\omega_X)}\right)\\[2mm]
& & +\|{\mathbb B}(V^+_k,V^-_k,\phi_{k})\|_{H^s(\omega_X)}\\[2mm]
& \le &
C\delta\theta_k^{\max((s-\alpha)_++s_0-\alpha, ~s-\alpha)}
\end{array}
\end{equation}
for all $s_0\le s\le s_1-3$.

Thus, we get that
$$\tilde{e}_{k, 1}^{(4)}=-\frac{\partial_{y}(R_k^b)_1}{\partial_{y}(\Psi^{a,\pm}+S_{\theta_k}\Phi^{\pm}_k)|_{y=0}} \delta\phi_k$$
satisfies
\begin{equation}\label{f3.93}
\|\tilde{e}_{k,1}^{(4)}\|_{H^s(\omega_X)}\le
C\left(\|(R^b_k)_1\|_{H^3(\omega_X)}(\delta\theta_k^{s-\alpha-2}\Delta_k+\delta^2\theta_k^{(s+2-\alpha)_++s_0-\alpha-1}\Delta_k)+\delta\theta_k^{s_0-\alpha-1}\Delta_k\|(R^b_k)_1\|_{H^{s+1}(\omega_X)}\right),
\end{equation}
which yields the estimate of $\tilde{e}_{k,1}^{(4)}$ given in (\ref{f3.85})
for any $s_0+1\le s\le s_1-4$ by using
(\ref{f3.91}).

One can obtain the estimate (\ref{f3.85}) of other components of $\tilde{e}_{k}^{(4)}$ similarly.

Noting that the trace of $\overline{e}_{\pm,k}^{(4)}$ on $\{y=0\}$ is equal to  $(\tilde{e}_{k,1}^{(4)}, \tilde{e}_{k,2}^{(4)})$,
the estimate of $\overline{e}_{\pm,k}^{(4)}$ in (\ref{f3.85}) follows immediately. \end{proof}

\vspace{.1in}

Summarizing all results from Lemmas \ref{lemma6.2}, \ref{lemma6.3}, \ref{lemma6.5} and \ref{lemma6.6}, we conclude

\begin{lemma} \label{lemma6.7}
The errors $e^\pm_{k}=\sum_{j=1}^4e^{\pm, j}_{k},  \overline{e}^{\pm}_k=\sum_{j=1}^4\overline{e}^{(j)}_{\pm, k}$ and $\tilde{e}_{k}=\sum_{j=1}^4\tilde{e}^{ (j)}_{k}$  satisfy
\begin{equation}\label{f3.94}
\begin{cases}
\|e^\pm_{k}\|_{s,X}\le C\delta^2\theta_k^{L_4(s)}\Delta_k, \quad s_0\le s\le s_1-2\cr
\|\overline{e}^{\pm}_k\|_
{s,X}\le C\delta^2\theta_k^{L_6(s)}\Delta_k, \quad  s_0+\frac 32\le s\le s_1-\frac 72\cr
\|\tilde{e}_{k}\|_{H^s(\omega_X)}\le C\delta^2\theta_k^{L_5(s)}\Delta_k, \quad   s_0+1 \le s\le s_1-4
\end{cases}
\end{equation}
for all $k\le n-1$, where $L_4(s)$ and $L_5(s)$ are given in Lemma 6.6, and 
\begin{equation}\label{ll}
L_6(s)=\max((s+{\frac 32}-\alpha)_++2(s_0-\alpha)-1, s+s_0-2\alpha).
\end{equation}

\end{lemma}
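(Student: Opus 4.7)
My plan is to read off Lemma \ref{lemma6.7} directly from Lemmas \ref{lemma6.2}, \ref{lemma6.3}, \ref{lemma6.5} and \ref{lemma6.6} by applying the triangle inequality to the decompositions
\[
e^\pm_k=\sum_{j=1}^4 e^{\pm,j}_k,\qquad \overline{e}^\pm_k=\sum_{j=1}^4\overline{e}^{(j)}_{\pm,k},\qquad \tilde{e}_k=\sum_{j=1}^4\tilde{e}^{(j)}_k.
\]
Each summand has already been estimated in the four preceding lemmas, so the remaining work is to identify the dominant exponent in each group and to intersect the four ranges of admissible $s$.

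For the interior error $e^\pm_k$, Lemmas \ref{lemma6.2}--\ref{lemma6.6} supply bounds with exponents $L_1(s)$, $L_2(s)$, $L_3(s)$, $L_4(s)$. A term-by-term comparison of the $\max$'s defining these exponents shows that every entry of $L_1,L_2,L_3$ is bounded by one of the two entries in $L_4(s)$; the crucial inequalities $s+2s_0-3\alpha+2\le s+s_0+2-2\alpha$ and $2(s_0-\alpha)+1\le s+s_0+2-2\alpha$ are immediate from $\alpha\ge s_0+5$ and $s\ge s_0$. Intersecting the ranges of validity $s_0-1\le s\le s_1-1$ (from Lemmas \ref{lemma6.2}, \ref{lemma6.3}, \ref{lemma6.5}) with $s_0\le s\le s_1-2$ (from Lemma \ref{lemma6.6}) yields exactly the range claimed.

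For the eikonal and boundary errors the key simplification is the identity $\overline{e}^{(3)}_{\pm,k}=\tilde{e}^{(3)}_k\equiv 0$ recorded in \eqref{6.46}, which holds by construction of the modified state $V^\pm_{n+\frac12}$ in \eqref{f3.73}--\eqref{f3.74}. Hence only three summands survive in each case. For $\overline{e}^\pm_k$, the three exponents are $s+s_0-2\alpha-2$, $s+s_0-2\alpha$ and $L_5(s-\tfrac12)=\max((s+\tfrac32-\alpha)_++2(s_0-\alpha)-1,\,s+s_0-2\alpha-\tfrac12)$; their pointwise maximum coincides with $L_6(s)$ as defined in \eqref{ll}, the first entry of $L_6$ matching the first entry of $L_5(s-\tfrac12)$ and the second entry $s+s_0-2\alpha$ dominating the other three candidates. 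For $\tilde{e}_k$ the exponents are $s+s_0-2\alpha-\tfrac52$, $s+s_0-2\alpha-\tfrac12$ and $L_5(s)$, and since $L_5(s)$ already contains $s+s_0-2\alpha$ as one of its entries it dominates the other two.

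The main (mild) obstacle is purely combinatorial bookkeeping: matching the four $s$-ranges in each source lemma so that they intersect to the announced intervals $s_0+\tfrac32\le s\le s_1-\tfrac72$ for $\overline{e}^\pm_k$ and $s_0+1\le s\le s_1-4$ for $\tilde{e}_k$ (these are dictated by the narrower ranges coming from Lemma \ref{lemma6.6}, where the trace theorem in the proof of the $\overline{e}^{(4)}$ and $\tilde{e}^{(4)}$ estimates cost $\tfrac12$ and additional derivatives of the coefficients cost extra regularity). No new analytic estimate is required beyond what has already been proved.
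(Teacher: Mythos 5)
Your proposal is correct and reproduces the paper's own argument: the paper simply states ``Summarizing all results from Lemmas \ref{lemma6.2}, \ref{lemma6.3}, \ref{lemma6.5} and \ref{lemma6.6}, we conclude'' and records Lemma \ref{lemma6.7} with no further proof, so the content is precisely the triangle-inequality decomposition, the exponent comparisons (your checks that $L_1,L_2,L_3\le L_4$ using $\alpha\ge s_0+5$ and $s\ge s_0$, and that $L_6(s)$ dominates the three surviving $\overline{e}$-exponents while $L_5(s)$ dominates the $\tilde{e}$-exponents), the vanishing $\overline{e}^{(3)}_{\pm,k}=\tilde{e}^{(3)}_k\equiv 0$ from \eqref{6.46}, and the intersection of the ranges of $s$, with the narrowest ranges coming from Lemma \ref{lemma6.6}. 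Same approach, correctly carried out.
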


\vspace{.1in}
From Lemma \ref{lemma6.7}, we immediately obtain

\begin{lemma}\label{lemma6.8}
 For any fixed $s_0>\frac 52$, $\alpha\ge s_0+2$ and $s_0+3 \le s_1\le 2\alpha-s_0+1$, the accumulated errors
\begin{equation}\label{f3.96}
E^\pm_n=\sum_{k=0}^{n-1}e^\pm_{k}, \quad \tilde{E}_{n}=\sum_{k=0}^{n-1}\tilde{e}_{k},
\quad \overline{E}^\pm_{n}=\sum_{k=0}^{n-1}\overline{e}^\pm_{k}
\end{equation}
satisfy the estimates
\begin{equation}\label{f3.97}
\begin{cases}
\|E^{\pm}_n\|_{s,X}\le C\delta^2\theta_n, \quad s_0\le s\le s_1-2,\cr
\|\overline{E}^\pm_n\|_
{s,X}\le C\delta^2\theta_n, \quad s_0+\frac 32\le s\le s_1-\frac 72,\cr
\|\tilde{E}_{n}\|_{H^s(\omega_X)}\le C\delta^2\theta_n, \quad  s_0+1\le s\le
s_1-4.\end{cases}.
\end{equation}\end{lemma}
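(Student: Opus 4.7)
The plan is to obtain each of the three accumulated-error bounds by direct summation of the single-step estimates from Lemma \ref{lemma6.7}, relying on the elementary asymptotic behavior $\Delta_k\simeq 1/(2\theta_k)$ of the dyadic scale $\theta_k=\sqrt{\theta_0^2+k}$.

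First, I would apply the triangle inequality together with the single-step bounds of Lemma \ref{lemma6.7}, obtaining
\[
\|E_n^\pm\|_{s,X}\le C\delta^2\sum_{k=0}^{n-1}\theta_k^{L_4(s)}\Delta_k,\quad \|\overline{E}_n^\pm\|_{s,X}\le C\delta^2\sum_{k=0}^{n-1}\theta_k^{L_6(s)}\Delta_k,
\]
and the analogous inequality for $\tilde{E}_n$ with exponent $L_5(s)$. The crucial analytic ingredient is then the standard integral comparison (pair $\Delta_k\simeq 1/(2\theta_k)$ with $\theta_k^L$ and compare with $\int_{\theta_0}^{\theta_n}\theta^L\,d\theta$):
\[
\sum_{k=0}^{n-1}\theta_k^{L}\Delta_k\;\le\; C\,\theta_n^{(L+1)_+},
\]
valid up to a harmless logarithmic correction at $L=-1$. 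Consequently, to conclude the target bound $C\delta^2\theta_n$ for each accumulated error it suffices to verify that the relevant exponent $L_j(s)$ is $\le 0$ on the respective admissible range of $s$.

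This verification is the only substantive part of the argument and is where the hypotheses $\alpha\ge s_0+2$ and $s_0+3\le s_1\le 2\alpha-s_0+1$ enter. For $L_5(s)=\max((s+2-\alpha)_++2(s_0-\alpha)-1,\,s+s_0-2\alpha)$ on $s_0+1\le s\le s_1-4$, both arguments of the max are bounded by $-3$; for $L_6(s)=\max((s+\tfrac32-\alpha)_++2(s_0-\alpha)-1,\,s+s_0-2\alpha)$ on $s_0+\tfrac32\le s\le s_1-\tfrac72$, both are bounded by $-\tfrac52$. These two cases are immediate. The tightest and last case is $L_4(s)=\max((s+1-\alpha)_++2(s_0-\alpha+1),\,s+s_0+2-2\alpha)$ on $s_0\le s\le s_1-2$: for the first argument one splits according to whether $s\le\alpha-1$ (giving $2(s_0-\alpha+1)\le -2$) or $s>\alpha-1$ (giving $s+2s_0-3\alpha+3\le s_0-\alpha+2\le 0$ by $\alpha\ge s_0+2$ and $s\le s_1-2\le 2\alpha-s_0-1$), while the second argument $s+s_0+2-2\alpha$ is controlled precisely by the coupling $s_1\le 2\alpha-s_0+1$.

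I expect no conceptual obstacle here; the entire proof is a bookkeeping exercise on exponents. The only mildly delicate point is the tightness of the $L_4$ case, which is exactly why the hypothesis on $s_1$ is imposed in this form and why this lemma sets the ceiling $s_1\le 2\alpha-s_0+1$ that must be respected at every subsequent inductive step $(H_n)\Rightarrow(H_{n+1})$.
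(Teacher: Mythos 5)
Your strategy (triangle inequality, integral comparison $\sum_k\theta_k^{L}\Delta_k\lesssim\theta_n^{(L+1)_+}$, then verify $L_j(s)\le 0$ on each admissible range) is exactly what the paper intends when it asserts that the lemma follows ``immediately'' from Lemma~\ref{lemma6.7}, and your computations for $L_5$ and $L_6$ check out. However, your verification of $L_4(s)\le 0$ for $s_0\le s\le s_1-2$ has a gap at precisely the place you flag as the tight case. You bound the first argument of $L_4$ correctly by $0$, but for the second argument you only remark that it ``is controlled precisely by the coupling $s_1\le 2\alpha-s_0+1$'' without carrying out the computation. Doing so at the endpoint $s=s_1-2$ gives
\[
s+s_0+2-2\alpha \;=\; s_1+s_0-2\alpha \;\le\; 1,
\]
not $\le 0$. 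Hence when $s_1=2\alpha-s_0+1$ one has $L_4(s_1-2)=1$, and then $\sum_k\theta_k^{L_4(s_1-2)}\Delta_k\simeq\theta_n^2$, so the claimed bound $\|E_n^\pm\|_{s_1-2,X}\le C\delta^2\theta_n$ does not follow from your own sufficient criterion.

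The exponent test $L_4(s)\le 0$ is equivalent to $s\le 2\alpha-s_0-2$, so under the stated hypothesis $s_1\le 2\alpha-s_0+1$ the argument actually only covers $s_0\le s\le s_1-3$. To close the gap one either needs the slightly stronger coupling $s_1\le 2\alpha-s_0$ (which yields $L_4(s)\le 0$ on the full range), or one must restrict the first conclusion to $s_0\le s\le s_1-3$. This off-by-one appears to be inherited from the lemma's own hypotheses, but asserting $L_4(s)\le 0$ without checking the second argument is a genuine gap in your write-up that should be explicitly resolved rather than waved away.
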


\vspace{.1in}
To study problems (\ref{5.19}) and (\ref{5.27})-(\ref{5.28}), first we have

\begin{lemma}\label{lemma6.9}
With the same range of $s_0$ and  $s_1$ as given in Lemma \ref{lemma6.8},  we have
\begin{equation}\label{f3.98}
\begin{cases}
\|f_n^\pm\|_{s,X}\le C\Delta_n (\theta_n^{s-s_2-1}\|f_a^\pm\|_{s_2,X}+\delta^2\theta_n^{s-s_3}+\delta^2\theta_n^{(s-s_4)_++L_4(s_4)})\cr
\|g_n\|_{H^s(\omega_X)}\le C\delta^2\Delta_n(\theta_n^{s-s_5}+\theta_n^{(s-s_6)_++L_5(s_6)})\cr
\|h^\pm_n\|_{s,X}\le C\delta^2\Delta_n (\theta_n^{s-s_7}+\theta_n^{(s-s_8)_++L_6(s_8)+L_5(s_8-\frac 12)})\cr
\end{cases}
\end{equation}
for all
$$\begin{cases}
s_2\ge 0,\quad
s_0\le s_3, \; s_4\le s_1-2,\\[2mm]
  s_0+1\le s_5,\; s_6\le s_1-4,\\[2mm]
s_0+\frac 32\le s_7, \;s_8\le s_1-\frac 72,
\end{cases}
$$
where $L_4(s)$, and $L_6(s)$, $L_7(s)$ are given in \eqref{f3.86} and \eqref{ll} respectively.
\end{lemma}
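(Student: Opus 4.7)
The plan is to start from the explicit telescoping formulas for $f_n^\pm$, $g_n$, and $h_n^\pm$ obtained from the definitions \eqref{f-n}, \eqref{g-n}, and \eqref{5.34}--\eqref{5.35}, and then to estimate each resulting term separately using the smoothing properties \eqref{5.15}--\eqref{5.16} together with the accumulated error estimates of Lemma \ref{lemma6.8} and the single-step error estimates of Lemma \ref{lemma6.7}. The whole argument is essentially bookkeeping on indices.

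First I would rewrite, for $n\ge 1$,
\[
f_n^\pm=(S_{\theta_n}-S_{\theta_{n-1}})f_a^\pm-(S_{\theta_n}-S_{\theta_{n-1}})E_{n-1}^\pm-S_{\theta_n}e_{n-1}^\pm,
\]
and the analogous formulas
\[
g_n=-(S_{\theta_n}-S_{\theta_{n-1}})\tilde E_{n-1}-S_{\theta_n}\tilde e_{n-1},
\quad
h_n^\pm=-(S_{\theta_n}-S_{\theta_{n-1}})\bigl(\overline E^\pm_{n-1}-\mathbb{E}\tilde E_{n-1,i}\bigr)-S_{\theta_n}\bigl(\overline e_{\pm,n-1}-\mathbb{E}\tilde e_{n-1,i}\bigr),
\]
where $i=1$ for the ``$+$'' equation and $i=2$ for the ``$-$'' equation. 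The key tool is the integral representation
\[
(S_{\theta_n}-S_{\theta_{n-1}})u=\int_{\theta_{n-1}}^{\theta_n}\frac{d}{d\theta}S_\theta u\,d\theta,
\]
which combined with the third line of \eqref{5.15} gives
\[
\|(S_{\theta_n}-S_{\theta_{n-1}})u\|_{H^s(\Omega_X)}\le C\Delta_n\theta_n^{s-\alpha-1}\|u\|_{H^\alpha(\Omega_X)},
\]
uniformly in $s,\alpha\ge 0$ (and similarly on $\omega_X$), since $\Delta_{n-1}$ and $\Delta_n$ are comparable.

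Next I would plug this in term by term. For $f_n^\pm$: the first piece yields directly $C\Delta_n\theta_n^{s-s_2-1}\|f_a^\pm\|_{s_2,X}$; the second piece, using Lemma \ref{lemma6.8} (which gives $\|E_{n-1}^\pm\|_{s_3,X}\le C\delta^2\theta_{n-1}$ for $s_0\le s_3\le s_1-2$), contributes at most $C\delta^2\Delta_n\theta_n^{s-s_3}$; and the third piece, by the first inequality in \eqref{5.15} together with Lemma \ref{lemma6.7}, contributes $C\theta_n^{(s-s_4)_+}\cdot C\delta^2\theta_{n-1}^{L_4(s_4)}\Delta_{n-1}\le C\delta^2\Delta_n\theta_n^{(s-s_4)_++L_4(s_4)}$ for $s_0\le s_4\le s_1-2$. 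Adding the three pieces yields the first line of \eqref{f3.98}. The argument for $g_n$ is parallel, using the boundary-trace estimates of Lemmas \ref{lemma6.7} and \ref{lemma6.8}; it gives the claimed bound for all $s_0+1\le s_5,s_6\le s_1-4$.

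For $h_n^\pm$ the extra ingredient is the extension operator $\mathbb{E}: H^\sigma(\omega_X)\to H^{\sigma+\frac12}(\Omega_X)$, which costs a half-derivative shift. Combined with the bounds on $\overline E_{n-1}^\pm$ and $\tilde E_{n-1,i}$ from Lemma \ref{lemma6.8}, the ``telescoping'' part gives $C\delta^2\Delta_n\theta_n^{s-s_7}$ for any $s_0+\frac32\le s_7\le s_1-\frac72$, while the ``single-step'' piece is bounded using the third and second lines of Lemma \ref{lemma6.7}, yielding the contribution $C\delta^2\Delta_n\theta_n^{(s-s_8)_++L_6(s_8)+L_5(s_8-\frac12)}$ (the sum of the two loss exponents is an easy over-bound of the maximum that arises naturally). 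Summing the two contributions gives the last line of \eqref{f3.98}.

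The main bookkeeping obstacle is ensuring that the indices chosen in Lemmas \ref{lemma6.7}--\ref{lemma6.8} are actually admissible for the required ranges of $s_2,\ldots,s_8$; this is where the hypotheses $s_0>\tfrac52$, $\alpha\ge s_0+5$, and $\alpha+6\le s_1\le 2\alpha-s_0+1$ are used. Apart from this, the estimates follow mechanically from \eqref{5.15}--\eqref{5.16}.
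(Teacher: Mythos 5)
Your proposal follows essentially the same route as the paper: start from the telescoping identities \eqref{f3.99}, use the smoothing-operator bounds \eqref{5.15}--\eqref{5.16} on each of the three pieces, and invoke Lemma~\ref{lemma6.7} for the single-step error and Lemma~\ref{lemma6.8} for the accumulated error, summing the three contributions. The bookkeeping for $f_n^\pm$ and $g_n$ matches the paper's and is correct. One caveat concerns the parenthetical remark for $h_n^\pm$. You write that ``the sum of the two loss exponents is an easy over-bound of the maximum that arises naturally.'' The direction is reversed: in the admissible ranges $L_6(s_8)$ and $L_5(s_8-\tfrac12)$ are both strictly negative, hence $L_6(s_8)+L_5(s_8-\tfrac12)<\max\bigl(L_6(s_8),L_5(s_8-\tfrac12)\bigr)$, and since $\theta_n\ge 1$, $\theta_n^{L_6(s_8)+L_5(s_8-\frac12)}$ is strictly \emph{smaller} than $\theta_n^{\max(L_6(s_8),L_5(s_8-\frac12))}$ --- that is, the sum is an \emph{under}-bound, and the inequality as stated in \eqref{f3.98} with the sum does not follow from the max that the computation actually produces. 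The paper's own proof is too terse to resolve this, and the stated exponent $L_6(s_8)+L_5(s_8-\tfrac12)$ appears to be a typo for $\max\bigl(L_6(s_8),L_5(s_8-\tfrac12)\bigr)$; the downstream use of the lemma in the verification of $(H_{n+1})$ only needs the weaker (max) form, so this causes no further damage. You should replace the sum by the max and drop the ``over-bound'' justification; otherwise the argument is sound and coincides with the paper's.
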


\begin{proof}
From the definitions of $f_n^\pm, g_n$ and $h_n^\pm$ given in (\ref{f-n}), (\ref{g-n}) and \eqref{5.34}-\eqref{5.35} respectively, obviously we have
\begin{equation}\label{f3.99}
\begin{cases}
f_n^\pm=(S_{\theta_n}-S_{\theta_{n-1}})f_a^\pm-(S_{\theta_n}-S_{\theta_{n-1}})E^\pm_{n-1}
-S_{\theta_{n}}e^\pm_{n-1},\cr
g_n=-(S_{\theta_n}-S_{\theta_{n-1}})\tilde{E}_{n-1}-S_{\theta_{n}}\tilde{e}_{n-1},\cr
h_n^+=-(S_{\theta_{n}}-S_{\theta_{n-1}})(\bar{E}^+_{n-1}-{\mathbb E}(\tilde{E}_{n-1,1}))
-S_{\theta_{n}}(\overline{e}_{+,n-1}-{\mathbb E}(\tilde{e}_{n-1,1})),\cr
h_n^-=-(S_{\theta_{n}}-S_{\theta_{n-1}})(\bar{E}^-_{n-1}-{\mathbb E}(\tilde{E}_{n-1,2}))
-S_{\theta_{n}}(\overline{e}_{-,n-1}-{\mathbb E}(\tilde{e}_{n-1,2})).
\end{cases}
\end{equation}

By using the properties of the smoothing operators, and Lemmas \ref{lemma6.7} and \ref{lemma6.8} we have that for all $s\ge 0$,
$$\begin{cases}
\|(S_{\theta_n}-S_{\theta_{n-1}})f_a^\pm\|_{s,X}\le
C\theta_n^{s-\tilde{s}-1}\Delta_n\|f_a^\pm\|_{\tilde{s},X}, \quad \tilde{s}\ge 0\cr
\|(S_{\theta_n}-S_{\theta_{n-1}})E^\pm_{n-1}\|_{s,X}\le
C\theta_n^{s-\tilde{s}-1}\Delta_n\|E^\pm_{n-1}\|_{\tilde{s},X}\le C\delta^2\theta_n^{s-\tilde{s}}\Delta_n, \quad s_0\le \tilde{s}\le s_1-2\cr
\|S_{\theta_{n}}{e}^\pm_{n-1}\|_{s,X}\le
C\theta_n^{(s-\tilde{s})_+}\|e^\pm_{n-1}\|_{\tilde{s},X}\le C \delta^2\theta_n^{(s-\tilde{s})_++L_4(\tilde{s})}\Delta_n,
 \quad s_0\le \tilde{s}\le s_1-2
\end{cases}
$$
which  implies the first estimate given in (\ref{f3.98}).

Similarly, we have
$$\begin{cases}
\|(S_{\theta_n}-S_{\theta_{n-1}})\tilde{E}_{n-1}\|_{H^s(\omega_X)}\le
C\theta_n^{s-\tilde{s}-1}\Delta_n\|\tilde{E}_{n-1}\|_{H^{\tilde{s}}(\omega_X)}
\le C\delta^2\theta_n^{s-\tilde{s}}\Delta_n  \cr
\|S_{\theta_{n}}\tilde{e}_{n-1}\|_{H^s(\omega_X)}\le
C\theta_n^{(s-\tilde{s})_+}\|\tilde{e}_{n-1}\|_{H^{\tilde s}(\omega_X)}\le
C\delta^2\theta_n^{(s-\tilde{s})_++L_5(\tilde{s})}\Delta_n
\end{cases}
$$
for all $s_0+1\le \tilde{s}\le
s_1-4$, this follows the estimate of $g_n$ given in (\ref{f3.98}) immediately.

From the definition of $h_n^\pm$, we have
$$\|h_n^\pm\|_{s, X}\le
C\theta_{n}^{s-\tilde{s}-1}\Delta_n\|\overline{E}^\pm_{n-1}-{\mathbb E}(\tilde{E}_{n-1})\|_{\tilde{s}, X}
+C\theta_{n}^{(s-\tilde{s})_+}\|\overline{e}^\pm_{n-1}-{\mathbb E}(\tilde{e}_{n-1})\|_{\tilde{s}, X},$$
which yields the conclusion given in (\ref{f3.98}) by  using Lemmas \ref{lemma6.7} and \ref{lemma6.8}. \end{proof}

\vspace{.1in}

To close this Nash-Moser iteration scheme, it remains to verify the inductive assumption
($H_{n}$) given at the beginning of this section.

\vspace{.1in}
\underline{\bf Verification of the assumption ($H_{n}$).}

\vspace{.1in}
(1) The assertion of ($H_0$) can be easily verified by studying the problem \eqref{5.19} with $n=0$ of $\delta\tilde{V}^\pm_0$,  and the problem \eqref{5.27} with $n=0$ of $\delta\Phi^\pm_0$.

(2) Assume that ($H_{n}$) holds, let us study ($H_{n+1}$).

To apply Corollary \ref{coro4.7} in the problem (\ref{5.19}), first we note that
$$
\begin{array}{l}
\|\dot{U}^{a,\pm}+V^{\pm}_{n+\frac 12}, \nabla(\dot{\Psi}^{a,\pm}+
S_{\theta_n}\Phi^{\pm}_{n})\|_{s_0+2, X}\le
\|\dot{U}^{a,\pm}, \nabla \dot{\Psi}^{a,\pm}\|_{s_0+2, X}\\[2mm]
\hspace{.5in}
+\|V^{\pm}_{n+\frac 12}-S_{\theta_n}V^{\pm}_{n}\|_{s_0+2, X}+
\|S_{\theta_n}V^{\pm}_{n},\nabla(S_{\theta_n}\Phi^{\pm}_{n})\|_{s_0+2,X}\\[2mm]
\hspace{.5in}\le
C\delta(1+\theta_n^{s_0+3-\alpha}+\theta_n^{(s_0+3-\alpha)_+})
\le C\delta
\end{array}$$
when $\alpha\ge  s_0+3$, by using the assumption (\ref{6.1}) and Lemmas \ref{lemma6.1} and \ref{lemma6.4}.

Thus, we can apply Corollary \ref{coro4.7} in the problem  (\ref{5.19}) to obtain
\begin{equation}\label{f3.100}
\begin{array}{l}
\|\delta\tilde{V}^{\pm}_{n}\|_{s,X}+
\|\delta\phi_{n}\|_{H^{s+1}(\omega_X)}
\le
C(\|f_n^\pm\|_{s+1,X}+\|g_n\|_{H^{s+1}(\omega_X)}\\[2mm]
\hspace{.8in}+\|\dot{U}^{a,\pm}+V^{\pm}_{n+\frac 12},
\nabla(\dot{\Psi}^{a,\pm}+S_{\theta_n}\Phi^{\pm}_{n})\|_{s+2,X}(\|f_n^\pm\|_{s_0+2,X}+\|g_n\|_{H^{s_0+2}(\omega_X)})).
\end{array}
\end{equation}
When $\alpha\ge s_0+4$ and $s_1\ge \alpha+5$, setting $s_2=\alpha+1$ and $s_3=\alpha+2$ in Lemma \ref{lemma6.9}, it follows
$$\|f_n^\pm\|_{s+1,X}\le C\delta \theta_n^{s-\alpha-1}\Delta_n(\frac{\|f_a^\pm\|_{\alpha+1,X}}{\delta}+\delta)+C\delta^2\Delta_n\theta_n^{(s+1-s_4)_++L_4(s_4)}.
$$

On the other hand, by setting
$$s_4=\begin{cases}
s, \quad s_0\le s\le s_1-2\cr
s_1-1, \quad s_1-1\le s\le s_1
\end{cases}$$
one has
$$(s+1-s_4)_++L_4(s_4)\le s-\alpha-1$$
for all $s_0\le s\le s_1$, thus we get
\begin{equation}\label{6.5}
\|f_n^\pm\|_{s+1,X}\le C\delta \theta_n^{s-\alpha-1}\Delta_n(\frac{\|f_a^\pm\|_{\alpha+1,X}}{\delta}+\delta)+C\delta^2\Delta_n\theta_n^{s-\alpha-1}.
\end{equation}

Similarly, as $s_1\ge \alpha+6$, by setting $s_5=\alpha+2$, and
$$s_6=\begin{cases}
s+1, \quad s_0\le s\le s_1-5\cr
s_1-4, \quad s_1-4\le s\le s_1
\end{cases}$$
in Lemma \ref{lemma6.9}, we have
\begin{equation}\label{6.6}
\|g_n\|_{H^{s+1}(\omega_X)}\le C\delta^2\Delta_n\theta_n^{s-\alpha-1}
\end{equation}
for all $s_0\le s\le s_1$.

When $\alpha\ge s_0+5$, by letting $s_2=\alpha+2$, $s_3=s_5=\alpha+3$, $s_4=s_6=\alpha-3$ in Lemma \ref{lemma6.9}, we have
$$ \|f_n^\pm\|_{s_0+2,X}+\|g_n\|_{H^{s_0+2}(\omega_X)}\le
C\delta\theta_n^{s_0-\alpha-1}\Delta_n$$
provided that
$$\frac{\|f_a^\pm\|_{\alpha+2,X}}{\delta}\le C<+\infty.$$

On the other hand, from the assumption (\ref{6.1}) and Lemmas \ref{lemma6.1} and \ref{lemma6.4} we have
\begin{equation}\label{f3.102}
\|\dot{U}^{a,\pm}+V^{\pm}_{n+\frac 12},
\nabla
(\dot{\Psi}^{a,\pm}+S_{\theta_n}\Phi^{\pm}_n)\|_{s+2,X}\le
C\delta(1+\theta_n^{s+3-\alpha}+\theta_n^{(s+3-\alpha)_+})
\end{equation}
which implies
\begin{equation}\label{f3.103}
\|\dot{U}^{a,\pm}+V^{\pm}_{n+\frac 12},
\nabla(\dot{\Psi}^{a,\pm}+S_{\theta_n}\Phi^{\pm}_n)\|_{s+2,X}
(\|f_n^\pm\|_{s_0+2,X}+\|g_n\|_{H^{s_0+2}(\omega_X)})\le
C\delta^2\theta_n^{s-\alpha-1}\Delta_n
\end{equation}
for all $s_0\le s\le s_1$.

Thus, plugging \eqref{6.5}, \eqref{6.6} and \eqref{f3.103} into (\ref{f3.100}) it follows
\begin{equation}\label{f3.104}
\|\delta \tilde{V}^{\pm}_n\|_{s,X}
+\|\delta \phi_n\|_{H^{s+1}(\Omega_X)}
\le
C\delta \theta_n^{s-\alpha-1}\Delta_n(\frac{\|f_a^\pm\|_{\alpha+1,X}}{\delta}+\delta)+C\delta^2\Delta_n\theta_n^{s-\alpha-1}
\end{equation}
for all $s_0\le s\le s_1$.

For the problems \eqref{5.27} and \eqref{5.28}, one can easily deduce the following estimate
\bel{6.10}
\begin{split}
&\|\delta\Phi^\pm_n\|_{s,X}
\leq
C\left\{\|g_n\|_{H^{s-\frac 12}(\omega_X)}
+\|h_n^\pm\|_{s,X}+(1+\|\nabla(\Psi^{a,\pm}+S_{\theta_n}\Phi_n^\pm)\|_{s_0,X})\|\delta \tilde{V}^{\pm}_n\|_{s,X}\right.\\[2mm]
&\hspace{.6in}\left.+\|\nabla(\Psi^{a,\pm}+S_{\theta_n}\Phi_n^\pm)\|_{s,X}\|\delta \tilde{V}^{\pm}_n\|_{s_0,X}
\right\}
\end{split}
\eeq
for all $s\ge 0$.

Similar to the discussion for the estimate \eqref{6.6} of $g_n$, by choosing $s_7, s_8$ properly in Lemma \ref{lemma6.9}, we can get
$$\|h_n^\pm\|_{s, X}\le C\delta^2\Delta_n\theta_n^{s-\alpha-1}, \quad s_0\le s\le s_1.$$

Thus, by using ($H_{n}$), \eqref{6.6} and \eqref{f3.104} in \eqref{6.10}, it follows
\begin{equation}\label{6.11}
\|\delta\Phi^\pm_n\|_{s,X}
\le
C\delta \theta_n^{s-\alpha-1}\Delta_n(\frac{\|f_a^\pm\|_{\alpha+1,X}}{\delta}+\delta)+C\delta^2\Delta_n\theta_n^{s-\alpha-1}
\end{equation}
for all $s_0\le s\le s_1$.

Together (\ref{f3.104}) with (\ref{6.11}), it follows ($H_{n+1}$) for $\delta V^{\pm, n}$ and $\delta \Phi^{\pm, n}$ immediately
by using
$$\delta V^{\pm}_n=\delta\tilde{V}^{\pm}_n+\frac{\partial_{y}(U^{a,\pm}+V^{\pm}_{n+\frac 12})}
{\partial_{y}(\Psi^{a,\pm}+ S_{\theta_n}\Phi^{\pm}_n)}\delta\Phi^{\pm}_n
$$
and letting both of $\delta$, $\frac{\|f_a^\pm\|_{\alpha+1,X}}{\delta}$ being properly small.

To verify other inequalities in ($H_{n+1}$), we shall use the idea from \cite{cou2}. From (\ref{5.21}),
we have
\begin{equation}\label{f3.107}
{\mathcal L}(V^{\pm}_{n+1}, \Phi^{\pm}_{n+1})V^{\pm}_{n+1}-f_a^\pm
=(S_{\theta_{n}}-I)f_a^\pm+(I-S_{\theta_{n}})E^{\pm}_{n}+
e^\pm_{ n}.
\end{equation}

From Lemma \ref{lemma6.8}, we have
\begin{equation}\label{f3.108}
\|(I-S_{\theta_{n}})E^{\pm}_{n}\|_{s,X}\le C\theta_n^{s-\tilde{s}}\|E^\pm_{n}\|_{\tilde{s},X}
\le C\theta_n^{s-\tilde{s}+1}\delta^2\le C\theta_n^{s-\alpha-1}\delta^2
\end{equation}
for all $s\le s_1-2$ by choosing $\tilde{s}=s_1-2$.

As we already have estimates of $\delta V^\pm_n$, $\delta\Phi_n^\pm$ given in ($H_{n+1}$), the result of Lemma \ref{lemma6.7} is also true for $k=n$, thus we have
\begin{equation}\label{f3.109}
\|e^\pm_{n}\|_{s,X}
\le C\theta_n^{L_4(s)}\delta^2\Delta_n\le C\theta_n^{s-\alpha-2}\delta^2\Delta_n
\end{equation}
for all $s_0\le s\le s_1-2$.

On the other hand, it is  easy to have
\begin{equation}\label{f3.110}
\begin{cases}
\|(S_{\theta_{n}}-I)f_a^\pm\|_{s,X}\le C\theta_n^{s-\alpha-1}\|f_a^\pm\|_{\alpha+1, X}, \quad s\le \alpha+1\cr
\|(S_{\theta_{n}}-I)f_a^\pm\|_{s,X}\le \|S_{\theta_{n}}f_a^\pm\|_{s,X}+\|
f_a^\pm\|_{s,X}\le C\theta_n^{s-\alpha-1}\|f_a^\pm\|_{\alpha+1, T}+C\delta, \quad \alpha+2\le s\le s_1+1.
\end{cases}
\end{equation}

Substituting  (\ref{f3.110}), (\ref{f3.109}) and (\ref{f3.108}) into (\ref{f3.107}),
it follows that
\begin{equation}\label{f3.112}
\|{\mathcal L}(V^{\pm}_{n+1}, \Phi^{\pm}_{n+1})V^{\pm}_{n+1}-f_a^\pm\|_{s,X}\le \delta \theta_n^{s-\alpha-1}
\end{equation}
for all  $s_0\le s\le s_1-2$.

Similarly, one can verify the last assertion of ($H_{n+1}$) for the estimate of $
{\mathbb B}(V^{+}_{n+1},V^{-}_{n+1}, \phi_{n+1})$.


\end{document}